\newtheoremstyle{introdef}
{3pt}
{3pt}
{\itshape}
{}
{\mdseries}
{}
{0pt}
{}
 \newcommand{\BH}{\mathbb H}
 \newcommand{\CH}{\mathrm{CH}}
\DeclareMathOperator{\AH}{AH}
\DeclareMathOperator{\GF}{GF}
\DeclareMathOperator{\CC}{CC}
\newtheorem{theorem}{Theorem}
\newtheorem{introtheorem}{Theorem}
\newtheorem{intromaincor}[introtheorem]{Corollary}
\newtheorem{lemma}{Lemma}
\newtheorem{defn}[lemma]{Definition}
\newtheorem*{lemma*}{Lemma}
\newtheorem*{theorem*}{Theorem}
\newtheorem*{prop*}{Proposition}
\newtheorem{prop}[lemma]{Proposition}
\newtheorem{cor}[lemma]{Corollary}
\newtheorem{maincor}[theorem]{Corollary}
\newtheorem*{cor*}{Corollary}
\newtheorem*{fact*}{Fact}
\newtheorem{fact}[lemma]{Fact}
\newtheorem*{density*}{Density}
\theoremstyle{introdef}
\newtheorem*{empty*}{}
\numberwithin{lemma}{section}
\theoremstyle{definition}
\newcommand{\R}{\mathbb{R}}
\newcommand{\Z}{\mathbb{Z}}
\newcommand{\C}{\mathbb{C}}
\newcommand{\N}{\mathbb{N}}
\title{Connectivity in the space of framed hyperbolic 3-manifolds}
\author{Matthew Zevenbergen}
\date{}
\begin{document}

\maketitle

\begin{abstract}
    We prove that the space $\mathcal{H}_\infty$ of framed infinite volume hyperbolic $3$-manifolds is connected but not path connected. Two proofs of connectivity of this space, which is equipped with the geometric topology, are given, each utilizing the density theorem  for Kleinian groups. In particular, we construct a hyperbolic $3$-manifold whose set of framings is dense in $\mathcal{H}_\infty$. Examples of paths in $\mathcal{H}_\infty$ are discussed, including paths of geometrically finite manifolds limiting to certain infinite type geometric limits of quasi-Fuchsian manifolds. The discussion of paths culminates in describing an infinite family of non-tame hyperbolic $3$-manifolds, each of whose set of framings is a path component of $\mathcal{H}_\infty$, establishing that $\mathcal{H}_\infty$ is not path connected.
\end{abstract}

\section{Introduction}

We will examine questions of global connectivity in the space $\mathcal{H}$ of isometry classes of complete oriented connected framed hyperbolic $3$-manifolds $(M,f)$. This space is equipped with the geometric topology. Informally, two framed manifolds are close in the geometric topology if they are almost isometric on large compact neighborhoods of their framed basepoints. The space $\mathcal{H}$ is homeomorphic to the space $\mathcal{D}$ of Kleinian groups equipped with the Chabauty topology, which we will also refer to as the geometric topology; here, a Kleinian group is a\textit{ torsion free }discrete subgroup of $\mathrm{PSL}_2\C$ (see Section \ref{subsec-GeometricTopology}). 

\begin{empty*}
    A sequence of Kleinian groups $\{\Gamma_n\}\subseteq\mathcal{D}$ converges to $\Gamma$ in the geometric topology on $\mathcal{D}$ if and only if the following two conditions hold:
    \begin{enumerate}
        \item If $\psi\in\mathrm{PSL}_2\C$ is an accumulation point of a sequence $\psi_n\in\Gamma_n$, then $\psi\in\Gamma$.
        \item For all $\psi\in\Gamma$, there exists a sequence $\psi_n\in\Gamma_n$ so that $\psi_n\rightarrow\psi$ in $\mathrm{PSL}_2\C$.
    \end{enumerate}
\end{empty*}

The resolution of the density conjecture (now theorem) by Namazi and Souto \cite{NAM} and separately Ohshika \cite{OHS} provides a starting place for studying the global topology of the space $\mathcal{D}$ of Kleinian groups. A strong version of the density theorem \cite[Corollary 12.3]{NAM} says that every finitely generated Kleinian group is the geometric limit of a sequence of geometrically finite Kleinian groups. Any Kleinian group is the geometric limit of a sequence of finitely generated subgroups of itself (see Lemma \ref{fglim}), so we have the following corollary of the density theorem.

\begin{density*}[Namazi-Souto \cite{NAM}, Ohshika \cite{OHS}]
    Every Kleinian group is the geometric limit of a sequence of geometrically finite Kleinian groups.
\end{density*}

We will use this result to identify all connected components of $\mathcal{H}$. For a hyperbolic $3$-manifold $M$ with positively oriented orthonormal frame bundle $\mathcal{F}M$, we define \[\ell(M)=\{(M,f)\;|\;f\in\mathcal{F}M\}\] to be the \textit{leaf} of $\mathcal{H}$ corresponding to $M$. 

\begin{introtheorem}
\label{introthm-connectedcomponents}
    The connected components of $\mathcal{H}$ are:
    \begin{enumerate}
        \item $\ell(M)$ for each hyperbolic $3$-manifold $M$ with $\mathrm{vol}(M)<\infty$, and
        \item $\mathcal{H}_\infty=\{(M,f)\in\mathcal{H}\;|\;\mathrm{vol}(M)=\infty\}$.
    \end{enumerate}
\end{introtheorem}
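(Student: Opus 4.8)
The plan is to show that each of the two types of set listed is a \emph{maximal} connected subset of $\mathcal{H}$; since $\mathcal{H}$ is the disjoint union of $\mathcal{H}_\infty$ and the leaves $\ell(M)$ over isometry classes of finite-volume hyperbolic $3$-manifolds $M$, these will then be precisely the connected components. Connectivity of the pieces is the easy half: for connected oriented $M$ the orthonormal frame bundle $\mathcal{F}M$ is connected, being an $\mathrm{SO}(3)$-bundle over a connected base, so $\ell(M)=L_M(\mathcal{F}M)$ is connected by continuity of $L_M$; for $\mathcal{H}_\infty$ this comes for free once a connected dense subset is produced.

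First I would record two facts, both of classical, J\o rgensen--Thurston flavor. (a) \emph{Each $\ell(M)$ with $\mathrm{vol}(M)<\infty$ is closed in $\mathcal{H}$.} If $(M,f_n)\to(N,g)$, the frames $f_n$ cannot escape to infinity in $M$: the thick part of $M$ is compact and its cusps have rank $2$, and pushing a basepoint deep into a rank-$2$ cusp drives the $\mathbb{Z}^2$ cusp subgroup toward a two-parameter family of parabolics with a common fixed point, so any Chabauty subsequential limit of the conjugated groups would be non-discrete, impossible in $\mathcal{D}\cong\mathcal{H}$. Hence $f_n$ subconverges to some $f$, and $(N,g)=(M,f)\in\ell(M)$ by continuity of $L_M$. (b) \emph{Local structure near finite-volume points.} If $(M_n,f_n)\to(N,g)$ with $\mathrm{vol}(N)<\infty$, then for all large $n$ the manifold $M_n$ has finite volume and is a Dehn filling of $N$; in particular $\mathrm{vol}(M_n)\le\mathrm{vol}(N)$, with equality forcing $M_n\cong N$. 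Two consequences: $\mathcal{H}_\infty$ is closed in $\mathcal{H}$, and every finite-volume $(N,g)$ has a neighborhood $U$ of $\ell(N)$ with $U\setminus\ell(N)\subseteq\{(M,f):\mathrm{vol}(M)<\mathrm{vol}(N)\}$.

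Using (a), (b) and the J\o rgensen--Thurston fact that the set of volumes of finite-volume hyperbolic $3$-manifolds is well-ordered, I would prove by transfinite induction on $\mathrm{vol}(M)$ that each finite-volume $\ell(M)$ is a connected component. At the minimal volume, (b) gives $U=\ell(M)$, so $\ell(M)$ is open, and with (a) and connectivity it is a component. In general, let $C\supseteq\ell(M)$ be connected and $U$ the neighborhood of (b); then $U\setminus\ell(M)$ lies in the union of the leaves $\ell(M')$ with $\mathrm{vol}(M')<\mathrm{vol}(M)$, each of which is already a connected component. Since $C$ is connected it lies in a single component, so — were it to meet any such $\ell(M')$ it would be contained in $\ell(M')\supseteq\ell(M)$, absurd — it is disjoint from all of them, forcing $C\cap U=\ell(M)$. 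As $C\cap U$ is open in $C$ and, by (a), closed in $C$, connectedness of $C$ gives $C=\ell(M)$.

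The heart of the argument is the connectivity of $\mathcal{H}_\infty$, where the density theorem enters. I would fix a countable dense subset $\{\Gamma_i\}$ of the separable space $\mathcal{D}_\infty$ of infinite-covolume Kleinian groups, with each $\Gamma_i$ finitely generated and geometrically finite — possible because the density theorem, applied to finitely generated subgroups, makes the geometrically finite infinite-covolume groups dense in $\mathcal{D}_\infty$ (the approximating geometrically finite groups are forced to have infinite covolume, since otherwise a limit of lattices would be a lattice by (b)). I would then build an infinitely generated $\Gamma_0\cong\Gamma_1\ast\Gamma_2\ast\cdots$ by a Klein/Maskit combination, placing the limit sets of the factors pairwise disjoint and far apart — with separations chosen along an exhaustion — so that $M_0=\mathbb{H}^3/\Gamma_0$ has infinite volume and, for every $i$ and every $R$, contains a region $(1+\varepsilon)$-bi-Lipschitz to the $R$-ball about a basepoint of $\mathbb{H}^3/\Gamma_i$. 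Framing $M_0$ at these basepoints and letting $R\to\infty$ and $\varepsilon\to0$ shows $\mathbb{H}^3/\Gamma_i\in\overline{\ell(M_0)}$ for all $i$, so $\overline{\ell(M_0)}\supseteq\overline{\{\mathbb{H}^3/\Gamma_i\}}=\mathcal{H}_\infty$; for the reverse inclusion, $\ell(M_0)\subseteq\mathcal{H}_\infty$ and $\mathcal{H}_\infty$ is closed by (b). Thus $\mathcal{H}_\infty=\overline{\ell(M_0)}$ is the closure of the connected set $\ell(M_0)$, hence connected. Finally, if the component $K$ containing $\mathcal{H}_\infty$ also contained a finite-volume $(M,f)$, then $\ell(M)\subseteq K$, and since $\ell(M)$ is a component, $K=\ell(M)$, contradicting $\mathcal{H}_\infty\subseteq K$; so $K=\mathcal{H}_\infty$. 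The principal obstacle is the construction of $\Gamma_0$ and the verification that its leaf is dense — handling parabolics in the combination theorem and making the bi-Lipschitz models uniform in $i$ — while the J\o rgensen--Thurston local analysis near finite-volume manifolds is the other indispensable, if classical, ingredient.
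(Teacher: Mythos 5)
Your treatment of the finite--volume leaves is essentially sound and is a legitimate variant of the paper's argument: the paper inducts on the number of cusps using the same J{\o}rgensen--Thurston local structure you call (b), whereas you run a transfinite induction on volume using well-ordering of the volume spectrum; both routes need your fact (a) that each finite-volume leaf is closed, and both work. The divergence, and the problem, is in the proof that $\mathcal{H}_\infty$ is connected. The paper does \emph{not} need a dense leaf here: it only needs that all geometrically finite infinite-volume framed manifolds lie in a single \emph{path component} (Lemma \ref{pathcomp}: pick a round disk $D\subseteq\Omega(\Gamma)$ precisely invariant for $\{\mathds{1}\}$ and parallel-transport the frame along a ray into $CH(D)$; the path limits to $(\mathbb{H}^3,\mathcal{O}_O)$, so every such manifold is joined by a path to a common point), after which the density theorem makes this path component dense and connectivity of $\mathcal{H}_\infty$ follows. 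You instead attempt to manufacture a single manifold $M_0$ with $\overline{\ell(M_0)}=\mathcal{H}_\infty$ --- that is the content of the paper's separate Theorem \ref{introthm-denseleaf} --- and your sketch of it has a genuine gap.

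Concretely: if $\Gamma_i$ appears once as a free factor of $\Gamma_0$ via a Klein--Maskit combination using a precisely invariant half-space $CH(D_i)$, then the covering $\mathbb{H}^3/\Gamma_i\rightarrow M_0$ is injective only away from the $\Gamma_i$-orbit of $CH(D_i)$; for fixed $i$, the ball $B(p_i,R)$ eventually meets that orbit as $R\rightarrow\infty$, so it does \emph{not} embed ($(1+\varepsilon)$-bi-Lipschitzly or otherwise) in $M_0$ for large $R$. ``Limit sets far apart'' does not cure this, since it is the single fixed gluing disk for $\Gamma_i$, not the other factors, that obstructs large balls. To repair the construction you would need each $\Gamma_i$ to recur infinitely often, with the gluing disk of the copy responsible for radius $R$ pushed beyond $B(p_i,R)$, together with a precise-invariance/ping-pong verification for the resulting \emph{infinite} combination --- which is essentially what the paper's Section \ref{subsec-denseleaf} accomplishes by a different gluing (scooped manifolds with totally geodesic thrice-punctured-sphere boundary), designed exactly so that a prescribed $R$-ball survives. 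Separately, your parenthetical that the approximating geometrically finite groups have infinite covolume ``since otherwise a limit of lattices would be a lattice by (b)'' is wrong: (b) constrains sequences whose \emph{limit} has finite volume, and geometric limits of lattices need not be lattices (a tower of covers of a closed manifold with injectivity radius tending to infinity converges to the trivial group). The correct source for infinite covolume of the approximants is the strong form of the density theorem itself, as in Lemma \ref{limofgeomfin}.
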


The fact that each $\ell(M)$ is a connected component of $\mathcal{H}$ for $M$ with $\mathrm{vol}(M)<\infty$ follows from Mostow-Prasad rigidity and classical results on sequences in the geometric topology (see, e.g., \cite[Section E]{BEN}). To determine that the subset $\mathcal{H}_\infty$ of infinite volume framed hyperbolic $3$-manifolds is connected, we first show that all infinite volume geometrically finite framed hyperbolic $3$-manifolds lie in the same path component of $\mathcal{H}_\infty$ as $(\mathbb{H}^3,\mathcal{O})$, where $\mathcal{O}\in\mathcal{F}\mathbb{H}^3$ (see Lemma \ref{pathcomp}). The density theorem implies that this path component is dense in $\mathcal{H}_\infty$, so the closure of this path component must be $\mathcal{H}_\infty$, which is therefore connected.\par

The strategy to show that $\mathcal{H}_\infty$ is connected is to show that it is the closure of a path connected subset of $\mathcal{H}$, analogous to the proof that the topologist's sine curve is connected. Each leaf $\ell(M)$ is path connected, so the following theorem produces another dense path connected subset of $\mathcal{H}_\infty$, providing a second proof that $\mathcal{H}_\infty$ is connected.

\begin{introtheorem}
\label{introthm-denseleaf}
    There exists a hyperbolic $3$-manifold $M$ such that the leaf $\ell(M)$ is dense in $\mathcal{H}_\infty$.
\end{introtheorem}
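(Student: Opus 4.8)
The plan is to build a single infinite-volume hyperbolic $3$-manifold $M$ whose frame bundle, as we vary the framing, realizes geometric limits hitting every point of $\mathcal{H}_\infty$ up to arbitrary precision. By the Density corollary of Namazi–Souto and Ohshika, every point of $\mathcal{H}_\infty$ is a geometric limit of infinite-volume \emph{geometrically finite} framed manifolds, and by Lemma \ref{pathcomp} all of those lie in the path component of $(\mathbb{H}^3,\mathcal{O})$; since $\mathcal{H}_\infty$ is second countable, it therefore suffices to construct $M$ with $\ell(M)$ containing a sequence converging to $(\mathbb{H}^3,\mathcal{O})$ and, more importantly, with $\ell(M)$ dense in a countable dense subset of $\mathcal{H}_\infty$. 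Concretely, I would first fix a countable dense set $\{(N_i,g_i)\}_{i\in\N}\subseteq\mathcal{H}_\infty$ (second countability), and arrange each $N_i$ to be infinite-volume geometrically finite by the Density theorem together with the observation that a diagonal/approximation argument lets us replace each target by a geometrically finite one at the cost of passing to a finer countable dense family.

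The core construction is a geometric \emph{connect-sum along the convex core boundary}: I would take an infinite sequence of compact cores with boundary and glue them end-to-end along long thin tubes (or along pieces of totally geodesic surfaces, or via ``wringing the neck'' type deformations) so that the resulting $M$ contains, for every $i$ and every $\epsilon>0$, an isometrically embedded $R$-ball of the target $(N_i,g_i)$ with $R=R(i,\epsilon)\to\infty$. The key point is that geometrically finite ends are themselves ``thin at infinity'' in a controlled way, so one can splice a truncated geometrically finite piece onto a long geometrically finite collar; the standard tool here is that two geometrically finite manifolds sharing a common boundary geometry can be glued, and more flexibly, that one can find geometrically finite manifolds with arbitrarily deep almost-isometric copies of any prescribed geometrically finite ball (this is where hyperbolic Dehn-type fillings or the flexibility of quasi-Fuchsian / Schottky-type groups enters). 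Placing basepoints in the $i$-th glued piece and choosing framings appropriately then gives framed manifolds $(M,f)$ that are geometrically $\epsilon$-close to $(N_i,g_i)$; letting $i$ and $\epsilon^{-1}$ range over $\N$ shows $\ell(M)$ meets every basic open set, i.e. $\ell(M)$ is dense.

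The steps in order: (1) invoke second countability to reduce to a countable dense target set, and use the Density theorem to take each target geometrically finite; (2) prove a gluing/splicing lemma — given a geometrically finite $(N,g)$ and $R,\epsilon$, there is a geometrically finite manifold with infinite volume containing an $(R,\epsilon)$-almost-isometric copy of the $R$-ball around $g$, and with a geometrically finite end of prescribed ``collar'' type available for further gluing; (3) assemble the infinite connected sum $M$ of all these pieces along collars, checking completeness and that the metric is genuinely hyperbolic (the local model near each seam is hyperbolic by construction, so this is a matter of the gluing being isometric, not merely smooth); (4) verify $\mathrm{vol}(M)=\infty$ so $M\in\mathcal{H}_\infty$, and exhibit the framings witnessing density; (5) if needed, also check $(\mathbb{H}^3,\mathcal{O})\in\overline{\ell(M)}$, though this follows automatically once $\ell(M)$ is dense.

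The main obstacle I expect is step (2)–(3): genuinely \emph{isometric} gluing of hyperbolic pieces along a common boundary surface is delicate because one cannot in general match the second fundamental forms, so one must instead glue along totally geodesic boundary (requiring the pieces to have totally geodesic boundary, a real constraint) or insert an interpolating hyperbolic collar whose two ends have the required bending data — controlling that interpolation while keeping everything hyperbolic and keeping the embedded almost-isometric balls intact is the crux. A cleaner route, which I would try first, is to work entirely on the Kleinian-group side: build $M=\mathbb{H}^3/\Gamma$ where $\Gamma$ is an infinitely generated group assembled as an amalgam/HNN-type combination (via a ping-pong argument with carefully nested isometric spheres) of groups $\Gamma_i$ uniformizing the pieces, so that geometric limits of conjugates of $\Gamma$ (i.e. re-basings) realize the $\Gamma_i$; then the gluing is automatically ``isometric'' because it is just group theory, and the work shifts to verifying discreteness and the geometric-convergence statement via the Chabauty characterization quoted in the introduction.
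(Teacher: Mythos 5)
Your overall architecture matches the paper's: reduce to geometrically finite targets via the density theorem, manufacture for each target an $(\varepsilon,R)$-close approximant with boundary suitable for isometric gluing, assemble countably many such pieces into a single manifold $M$, and witness density by placing frames deep inside each piece. But you have correctly located the crux and then left it unresolved, and that is a genuine gap. The step you defer --- producing, for every infinite-volume target and every $\varepsilon,R$, a geometrically finite approximant whose convex core has \emph{totally geodesic} boundary with the $R$-ball contained in the convex core --- is exactly where the paper's real work lives: it modifies the circle-packing construction of Fuchs--Purcell--Stewart (building on Brooks). One approximates the target by a convex cocompact manifold admitting a circle packing, forms the ``scooped manifold'' by deleting the half-spaces over the packing and its dual, and doubles along the white faces; the right-angle condition on the dual packing forces the black ideal triangles to close up into totally geodesic thrice punctured spheres. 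Neither ``long thin tubes,'' ``wringing the neck,'' nor unspecified Dehn-filling flexibility produces this, and your fallback of Klein--Maskit combination does not help either: a combination along a surface subgroup still requires the two boundary subgroups to be Fuchsian and conjugate, which is the same matching problem in group-theoretic clothing.

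There is a second point you miss even granting totally geodesic boundary: to glue infinitely many pieces in a chain you must match the boundary surfaces isometrically, and a totally geodesic boundary surface of higher genus carries moduli, so generic pieces cannot be glued to one another. The paper's construction arranges that every boundary component is a thrice punctured sphere precisely because the totally geodesic thrice punctured sphere is rigid --- all of them are isometric --- so the chain gluing needs no moduli matching at all. Relatedly, your reduction to a countable dense subset of $\mathcal{H}_\infty$ is not quite the countability the construction needs; what one must count is the set $\mathcal{P}$ of isometry classes of the standardized pieces themselves (the paper does this with a finite-to-countable doubling map into the countable set of finite-volume hyperbolic $3$-manifolds), so that \emph{all} of them can be strung into one manifold and every approximant's convex core embeds isometrically in $M$. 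Without the thrice-punctured-sphere normalization and the countability of $\mathcal{P}$, steps (2)--(4) of your outline do not go through.
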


This theorem is established by modifying a circle packing construction by Fuchs, Purcell, and Stewart \cite{PUR} (see also Brooks \cite{BRO}) to show that every infinite volume framed hyperbolic $3$-manifold is the geometric limit of a sequence of framed hyperbolic $3$-manifolds whose convex core boundary is a disjoint union of totally geodesic thrice punctured spheres (see Proposition \ref{approxwithsph}). These manifolds are then glued together along totally geodesic thrice punctured spheres in their convex core boundaries to construct the manifold $M$ in Theorem \ref{introthm-denseleaf}.\par

Paths in the geometric topology are more controlled than sequences. Understanding this control is one of our major goals. The following proposition (see Section \ref{subsec-PathConstraints}) provides a way to track elements of $\mathrm{PSL}_2\C$ through a path in $\mathcal{D}$, and is a key tool for studying paths in $\mathcal{D}$.

\begin{prop*}
    \label{introprop-JMaps}
    Let $\Gamma:I\rightarrow\mathcal{D}$ be a path, and set $\Gamma(t)=\Gamma_t$ for all $t$ in the interval $I$. Then, for all $s\in I$, there exists a unique family of maps $J_{s,t}:\Gamma_s\rightarrow\overline{\mathrm{PSL}_2\C}=\mathrm{PSL}_2\C\cup\infty$, defined for all $t\in I$, such that for all $\psi\in\Gamma_s$,
    \begin{enumerate}
        \item $J_{s,t}(\psi)\in\Gamma_t\cup\infty$,
        \item $J_{s,s}(\psi)=\psi$,
        \item the map $t\mapsto J_{s,t}(\psi)\in\overline{\mathrm{PSL}_2\C}$ is continuous for $t\in[0,1]$,
        \item if $s<t'<t$ or $t<t'<s$, then $J_{s,t'}(\psi)=\infty$ implies $J_{s,t}(\psi)=\infty$.
    \end{enumerate}
    Moreover, for any $H\leq\Gamma_s$, $J_{s,t}|_H$ is an injective homomorphism into $\mathrm{PSL}_2\C$ if $\infty\not\in J_{s,t}(H)$.
\end{prop*}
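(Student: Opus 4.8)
The plan is to reduce the statement to constructing, for each $\psi\in\Gamma_s$, a single continuous path $\phi_\psi:I\to\overline{\text{PSL}_2\C}$ with $\phi_\psi(s)=\psi$, with $\phi_\psi(r)\in\Gamma_r\cup\infty$ for all $r$, and such that once $\phi_\psi$ equals $\infty$ it continues to equal $\infty$ as $r$ moves away from $s$; call such a path a \emph{tracking of $\psi$ based at $s$}. If trackings exist and are unique, then $J_{s,t}(\psi):=\phi_\psi(t)$ is a family satisfying (1)--(4) directly from the definition, and conversely any family satisfying (1)--(4) gives, via $r\mapsto J_{s,r}(\psi)$, a tracking of $\psi$ based at $s$; so uniqueness of trackings forces uniqueness of the family.

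The key input is a local rigidity statement: \emph{if $\Lambda_n\to\Lambda$ in $\mathcal{D}$ and $\delta_n\in\Lambda_n$ with $\delta_n\to\mathrm{id}$, then $\delta_n=\mathrm{id}$ for all large $n$}. I would prove this with the exponential map: writing $\delta_n=\exp(X_n)$ with $X_n\to0$ in the Lie algebra (possible since $\exp$ is a local diffeomorphism at $0$) and, after passing to a subsequence, $X_n/|X_n|\to\hat X$ with $|\hat X|=1$, the integers $k_n=\lfloor r/|X_n|\rfloor$ satisfy $\delta_n^{k_n}=\exp(k_nX_n)\to\exp(r\hat X)$ for every $r\in\R$, so by condition (1) in the description of Chabauty convergence the whole one-parameter subgroup $\{\exp(r\hat X):r\in\R\}$ lies in $\Lambda$, contradicting discreteness of $\Lambda$ because $\hat X\neq0$. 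Combined with conditions (1) and (2) and the continuity of $t\mapsto\Gamma_t$, this yields the \emph{local tracking lemma}: for every $r\in I$ and $\gamma\in\Gamma_r$ there are a neighborhood $V$ of $r$ in $I$ and $\rho>0$ with $\Gamma_r\cap\overline{B(\gamma,\rho)}=\{\gamma\}$ such that for every $t\in V$, $\Gamma_t$ contains exactly one point of $B(\gamma,\rho)$, and this point depends continuously on $t$ --- existence of a nearby point comes from condition (2), and the rigidity statement applied to the ratio of two such points gives uniqueness. The local lemma also gives at once the uniqueness of trackings based at any point $\tau\in I$: the set on which two such trackings agree is nonempty, closed by continuity, and open --- by the local lemma where the common value is finite, and by the monotonicity rule where it is $\infty$ --- hence all of $I$.

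For existence, fix $\psi\in\Gamma_s$ and let $\beta$ be the supremum of those $t\in[s,1]$ for which a tracking of $\psi$ on $[s,t]$ exists; by uniqueness these trackings are compatible and assemble to a tracking $\phi$ on $[s,\beta)$. The delicate step is that $\lim_{r\uparrow\beta}\phi(r)$ exists in the compact space $\overline{\text{PSL}_2\C}$: otherwise $\phi$ would have two distinct accumulation points as $r\uparrow\beta$, one of which, say $g$, is finite and hence lies in $\Gamma_\beta$ by condition (1); choosing $\rho<\tfrac12\operatorname{dist}(g,\Gamma_\beta\setminus\{g\})$, the second accumulation point forces $\phi$ outside $\overline{B(g,\rho)}$ along some $r_n'\uparrow\beta$, so the intermediate value theorem produces $r_n''\uparrow\beta$ with $\phi(r_n'')\in\partial B(g,\rho)$, and any subsequential limit of $\phi(r_n'')$ lies in $\Gamma_\beta$ (by condition (1), since $\Gamma_{r_n''}\to\Gamma_\beta$) at distance exactly $\rho$ from $g$ --- contradicting the choice of $\rho$. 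So $\phi(\beta):=\lim_{r\uparrow\beta}\phi(r)$ is well defined: if it equals $\infty$ we extend $\phi$ by the constant $\infty$ on $[\beta,1]$, and if it is finite the local lemma extends $\phi$ a little past $\beta$; either way $\beta=1$. Running the same construction on $[0,s]$ and concatenating produces $\phi_\psi$ on all of $I$, which completes the existence and uniqueness of the family $\{J_{s,t}\}$.

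For the final clause, fix $t$ with $\infty\notin J_{s,t}(H)$ and suppose $s\le t$ (the other case is symmetric). For each $\psi\in H$, the monotonicity rule together with $\phi_\psi(t)\neq\infty$ forces $\phi_\psi$ to be finite throughout $[s,t]$. Hence for $\psi_1,\psi_2\in H$ the path $r\mapsto\phi_{\psi_1}(r)\phi_{\psi_2}(r)$ is a continuous $\Gamma_r$-valued path on $[s,t]$ equal to $\psi_1\psi_2$ at $s$, so by the uniqueness argument it agrees with $\phi_{\psi_1\psi_2}$ there, and evaluating at $t$ gives $J_{s,t}(\psi_1)J_{s,t}(\psi_2)=J_{s,t}(\psi_1\psi_2)$; thus $J_{s,t}|_H$ is a homomorphism into $\text{PSL}_2\C$. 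If $\psi\in H$ satisfies $J_{s,t}(\psi)=\mathrm{id}$, then $\phi_\psi|_{[s,t]}$ and the constant path $\mathrm{id}$ are both continuous $\Gamma_r$-valued paths equal to $\mathrm{id}$ at $t$, so uniqueness gives $\phi_\psi\equiv\mathrm{id}$ on $[s,t]$ and hence $\psi=\phi_\psi(s)=\mathrm{id}$, proving injectivity. I expect the local rigidity statement and the existence of $\lim_{r\uparrow\beta}\phi(r)$ to be the only real obstacles; everything else is careful bookkeeping with the two convergence conditions.
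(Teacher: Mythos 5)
Your proposal is correct and takes essentially the same route as the paper: your exponential-map rigidity statement is Lemma \ref{Unbhd}, your local tracking lemma and uniqueness-by-connectedness are Proposition \ref{pslpath}, your continuation/limit-at-$\beta$ argument plays the role of Lemma \ref{Lemma-j-path-props}, and the homomorphism and injectivity clause is proved exactly as in Proposition \ref{Prop-InjConds}, via uniqueness of tracking paths. The one small imprecision is that the agreement set of two trackings is open only in the direction away from $s$ at a point where the common value is $\infty$, so the clopen argument should be run as the usual one-sided supremum continuation --- which the monotonicity rule you invoke already supplies.
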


The continuity in property (3) of this proposition tells us that any element $\psi\in\Gamma_s$ can be tracked through the groups $\Gamma_t$ by the path $J_{s,t}(\psi)$ for $t$ close to $s$, and the only way to ``introduce a new element" is for such a path to come in from infinity. One can use the Klein-Maskit combination theorems (see eg. \cite{MAT}) to construct examples of paths in $\mathcal{D}$ exhibiting this behavior, where paths $t\mapsto J_{s,t}(\psi)$ come in from (or diverge to) infinity. In conjunction with the classical theory of deformations of Kleinian groups due to Ahlfors, Bers, Marden, and Sullivan (\cite{MAT}), this allows us to construct the following family of examples, which appears below as Example \hyperlink{Ex-NonTame}{B.3}.

\begin{introtheorem}
\label{introthm-NonTameLimit}
    For a closed surface $S$ of genus at least two, let $C\subseteq S\times\Z\subseteq S\times \R$ be a collection of disjoint simple closed curves so that no two curves in $C$ are isotopic in $S\times \R$ and the curves $C_n=C\cap(S\times\{n\})$ give a pants decomposition of the surface $S_n=S\times\{n\}$ for all $n\in \Z$. Then, there exists a path $G:[1,\infty]\rightarrow\mathcal{D}$ so that $G(t)$ is geometrically finite for all $t<\infty$ and $\mathbb{H}^3/G(\infty)$ is isometric to the unique hyperbolic structure on $(S\times \R)\backslash C$.
\end{introtheorem}

The manifolds $\BH^3 / G(\infty)$ appearing in Theorem \ref{introthm-NonTameLimit} are among those non-tame hyperbolic $3$-manifolds that Thurston approximates with sequences of quasi-Fuchsian manifolds in \cite[Section 7]{THUII}, though the techniques and goals here are quite different.\par

We conclude with a discussion of path connectivity in the space $\mathcal{H}_\infty$ of infinite volume framed hyperbolic $3$-manifolds. We define an infinite family of non-tame hyperbolic $3$-manifolds, which we call \textit{symmetric infinite type }$(G,N)$-\textit{glued hyperbolic }$3$\textit{-manifolds}. Imprecisely, manifolds in this family are constructed by gluing together homeomorphic copies of a compact, oriented, connected, irreducible, atoriodal, acylindrical $3$-manifold $N$ according to vertex adjacencies in a highly symmetric infinite graph $G$ (see Section \ref{subsec-pathcomps} for a precise definition). These manifolds are hyperbolizable by a theorem of Souto and Stover \cite{SOUSTO}. In fact, the symmetry of $G$ implies that each symmetric infinite type $(G,N)$-glued hyperbolic $3$-manifold has a unique hyperbolic structure, by a forthcoming result of Cremaschi and Yarmola \cite{CREMYAR} (see Proposition \ref{Prop-GGluedUnique}). Our main result is the following.

\begin{introtheorem}
    \label{introthm-GGluePathComp}
    For any symmetric infinite type $(G,N)$-glued hyperbolic $3$-manifold $M$, the leaf $\ell(M)$ is a path component of $\mathcal{H}_\infty$.
\end{introtheorem}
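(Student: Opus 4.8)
The plan is to show that any path $\gamma\colon[0,1]\to\mathcal{H}_\infty$ with $\gamma(0)\in\ell(M)$ has $\gamma(1)\in\ell(M)$; since $\ell(M)$ is itself path connected (the frame bundle $\mathcal{F}M$ is connected and $L_M$ is continuous), this identifies $\ell(M)$ with the path component of each of its points. Transporting $\gamma$ through the homeomorphism $\mathcal{H}\cong\mathcal{D}$ gives a path $t\mapsto\Gamma_t\in\mathcal{D}$ with $\mathbb{H}^3/\Gamma_0=M$, say $\Gamma_0=\pi_1(M)$, and it suffices to show $\mathbb{H}^3/\Gamma_t$ is isometric to $M$ for every $t$. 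The first simplification is decisive: by Proposition \ref{Prop-GGluedUnique}, a symmetric infinite type $(G,N)$-glued hyperbolic $3$-manifold carries a \emph{unique} complete hyperbolic structure, so it is enough to prove that the underlying manifold $M_t:=\mathbb{H}^3/\Gamma_t$ is \emph{homeomorphic} to $M$ for all $t$.

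Next I would record the two structural features of $\Gamma_0=\pi_1(M)$ that drive the argument. Because $N$ is irreducible, boundary incompressible and acylindrical, the graph-of-groups decomposition of $\pi_1(M)$ along $G$ has one-ended vertex groups $\pi_1(N)$ and non-cyclic surface edge groups; a Bass--Serre argument then shows that $\pi_1(M)$ is \emph{freely indecomposable} and, more to the point, that no non-trivial subgroup of it is a free factor. Second, exhausting $G$ by finite connected subgraphs $F_k\nearrow G$, the subgroups $\pi_1(M^{(F_k)})\leq\pi_1(M)$ are finitely generated, exhaust $\pi_1(M)$, and — using acylindricity of $N$ together with the fact that a $(G,N)$-glued manifold has no cusps — are convex cocompact in $M$; moreover each $M^{(F_k)}$ is again irreducible, boundary incompressible, atoroidal and acylindrical, hence itself freely indecomposable.

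With these facts in hand, the core step is to run the $J$-map machinery of Proposition \ref{introprop-JMaps} to show that the tracking path $t\mapsto J_{0,t}(\psi)$ never reaches $\infty$. This amounts to extending Theorem \ref{introthm-CCPathFactor} beyond the convex cocompact setting: although $\Gamma_t$ need not be convex cocompact, the exhausting pieces $\pi_1(M^{(F_k)})$ are convex cocompact in $M$, and by controlling the geometry of $\Gamma_t$ near these quasiconvex pieces one argues, as in the proof of Theorem \ref{introthm-CCPathFactor}, that the ``escaped'' subgroup of each $\pi_1(M^{(F_k)})$ would be a free factor of it — impossible, since $M^{(F_k)}$ is freely indecomposable. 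As $k$ varies these pieces overlap along the surface subgroups carrying the gluing, so a connectedness argument over $G$ shows that nothing in $\Gamma_0$ escapes. By the last clause of Proposition \ref{introprop-JMaps}, $J_{0,t}$ is then an injective homomorphism $\pi_1(M)\hookrightarrow\Gamma_t$ for every $t$. (That $M$ is cusp-free is what keeps us away from the bad behavior of Example \hyperlink{Ex-CuspRank}{B.2}.)

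It remains to promote this injection to a homeomorphism $M_t\cong M$, and this — together with the extension of Theorem \ref{introthm-CCPathFactor} above — is where I expect the main obstacle to lie, since it must exploit the geometry of the glued manifolds and not merely their fundamental groups. Concretely, set $A=\{t\in[0,1]:\gamma(t)\in\ell(M)\}$; then $0\in A$, and I would prove $A$ is open and closed. For openness, near $t_0\in A$ one applies the no-escape argument in both time directions — legitimate, since $\Gamma_{t_0}\cong\pi_1(M)$ has the same structure — to get $\pi_1(M)\hookrightarrow\Gamma_t$ and, conversely, $\Gamma_t\hookrightarrow\pi_1(M)$ for $t$ near $t_0$; one must then rule out that $\Gamma_t$ strictly contains the image of $\pi_1(M)$, which would make $M\to M_t$ a non-trivial cover incompatible with $M_t$ simultaneously containing $\pi_1(M)$ as a quasiconvex piece with the full glued structure, and conclude $M_t\cong M$ by the rigidity of glued manifolds. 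For closedness, at a limit time $t_\infty$ the $J$-maps give $\pi_1(M)\hookrightarrow\Gamma_{t_\infty}$, while the values $\gamma(t_k)\in\ell(M)$ pin down the geometry of $M_{t_\infty}$ on arbitrarily large balls, and uniqueness of the hyperbolic structure forces $M_{t_\infty}\cong M$. Hence $A=[0,1]$, i.e. $\gamma([0,1])\subseteq\ell(M)$; since $\ell(M)$ is path connected, it is a path component of $\mathcal{H}_\infty$. As $\ell(M)\subsetneq\mathcal{H}_\infty$, this simultaneously yields that $\mathcal{H}_\infty$ is not path connected.
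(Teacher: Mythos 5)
Your overall skeleton (show the $J$-maps stay finite valued, upgrade the resulting injections to conjugacies, then run an open/closed argument using the compactness/rigidity of $\ell(M)$) matches the paper's, but the two steps you yourself flag as the main obstacles are genuine gaps, and the paper fills them with tools quite different from the ones you propose. First, the ``no escape'' step: you propose to extend Theorem \ref{Thm-CCPathFactor} by arguing that an escaped subgroup of $\pi_1(X_n)$ would be a free factor of it, contradicting free indecomposability. But the proof of Theorem \ref{Thm-CCPathFactor} (via Proposition \ref{prop-CCEmbedFactor} and Bowditch's decomposition $CC(M)\subseteq M_{\leq\eta}\cup W$) requires the \emph{ambient} groups $\Gamma_t$ to be convex cocompact; here $\Lambda(\Gamma_0)=S^2_\infty$ and the $\Gamma_t$ are nowhere near convex cocompact, so that machinery does not apply and no substitute is given. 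The paper instead avoids free-factor considerations entirely: since each $X_n$ is acylindrical, Thurston's ``$AH(\mathrm{acylindrical})$ is compact'' theorem gives a convergent subsequence of $[J_{0,s_k}|_{H_n}]$ in $AH(H_n)$, and Lemma \ref{Lemma-AHNonelemSubgroupConvg} (pinning the conjugacies using the nonelementary subgroup $H_0$) upgrades this to convergence in $D(H_n)$, directly contradicting divergence of any $J_{0,t}(\varphi)$.

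Second, ruling out $\Gamma_0\lneq\Gamma_t$: your claim that a non-trivial cover $M\rightarrow M_t$ is ``incompatible with $M_t$ containing $\pi_1(M)$ as a quasiconvex piece'' is not correct as stated --- $\Gamma_0$ is not quasiconvex in anything (its limit set is all of $S^2_\infty$), and $\Gamma_0$ genuinely admits non-trivial discrete extensions (e.g.\ by lifts of the isometries coming from $\mathrm{Aut}(G)$), so proper containment cannot be excluded by covering-space generalities. The paper's argument is that an element $\varphi\in\Gamma_s\setminus J_{0,s}(\Gamma_0)$ forces the path $t\mapsto J_{s,t}(\varphi)$ to diverge as $t\to0$, hence to pass through \emph{uncountably} many elements $\psi$ with $\langle\Gamma_0,\psi\rangle$ discrete; Proposition \ref{Thm-DiscCountable} (whose proof uses bounded injectivity radius, pure hyperbolicity, and van der Waerden's theorem) shows this set is countable, a contradiction. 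Your closedness step has a related soft spot: convergence $\gamma(t_k)\to\gamma(t_\infty)$ with $\gamma(t_k)\in\ell(M)$ only forces $\gamma(t_\infty)\in\ell(M)$ because $\mathcal{F}M/\mathrm{Isom}^+(M)$ is compact (a consequence of Proposition \ref{Prop-GGluedUnique} and the transitivity of $\mathrm{Aut}(G)$), which you do not invoke; without it the basepoints could escape and the geometric limit need not lie on the leaf.
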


Roughly, the proof proceeds by supposing that $\Gamma:[0,1]\rightarrow\mathcal{H}$ is a path so that $M=\mathbb{H}^3/\Gamma(0)$ is a symmetric infinite type $(G,N)$-glued hyperbolic $3$-manifold. Then, one considers a sequence of subgroups $H_n\leq H_{n+1}\leq\Gamma(0)$ exhausting $\Gamma(0)$ corresponding to an exhaustion $M=\cup X_n$, where each $X_n$ is a connected union of copies of $N$. If the image of any map $J_{0,t}|_{H_n}$ contains $\infty$, then one obtains a contradiction to Thurston's ``$\AH(\text{acylindrical})$ is compact" theorem \cite{THUI}, using that each $X_n$ is acylindrical. One then concludes that $J_{0,t}$ must be an injective homomorphism for all $t$, so the rigidity of $M$ implies that each $\Gamma(t)$ contains a subgroup that is conjugate to $\Gamma(0)$. Finally, a result on discrete extensions of Kleinian groups (Proposition \ref{Thm-DiscCountable}) implies that each $\Gamma(t)$ itself must in fact be conjugate to $\Gamma(0)$.\par

Theorem \ref{introthm-GGluePathComp} has the following immediate corollary.

\begin{intromaincor}
\label{introcor-notpathconn}
    $\mathcal{H}_\infty$ is not path connected.
\end{intromaincor}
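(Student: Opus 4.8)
The plan is to deduce Corollary \ref{introcor-notpathconn} immediately from Theorem \ref{introthm-GGluePathComp}, so the only real content is exhibiting a single manifold to which that theorem applies and observing that it cannot be all of $\mathcal{H}_\infty$. First I would invoke Theorem \ref{introthm-GGluePathComp}: fix any symmetric infinite type $(G,N)$-glued hyperbolic $3$-manifold $M$ (such manifolds exist by the construction referenced in Section \ref{subsec-pathcomps}, being hyperbolizable by Souto--Stover \cite{SOUSTO}), so that $\ell(M)$ is a path component of $\mathcal{H}_\infty$. Since $M$ has infinite volume — it is an infinite gluing of copies of $N$, hence is not tame and in particular not finite volume — we have $\ell(M)\subseteq\mathcal{H}_\infty$ and $\ell(M)\neq\varnothing$.

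Next I would argue that $\ell(M)$ is a proper subset of $\mathcal{H}_\infty$. The cleanest way is to exhibit a second point of $\mathcal{H}_\infty$ not in $\ell(M)$: for instance $(\mathbb{H}^3,\mathcal{O})$, which lies in $\mathcal{H}_\infty$ since $\mathrm{vol}(\mathbb{H}^3)=\infty$, but whose underlying manifold $\mathbb{H}^3$ is not isometric to $M$ (e.g. $\mathbb{H}^3$ is simply connected while $M$ has highly non-trivial fundamental group, or more simply $\mathbb{H}^3$ is convex cocompact while $M$ is not even tame). Hence $(\mathbb{H}^3,\mathcal{O})\notin\ell(M)$, so $\ell(M)\subsetneq\mathcal{H}_\infty$.

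Finally I would conclude: if $\mathcal{H}_\infty$ were path connected, it would be a single path component, so the path component $\ell(M)$ would have to equal all of $\mathcal{H}_\infty$, contradicting $\ell(M)\subsetneq\mathcal{H}_\infty$. Therefore $\mathcal{H}_\infty$ is not path connected. There is essentially no obstacle here — the entire difficulty is front-loaded into Theorem \ref{introthm-GGluePathComp} — so the ``proof'' is just this short deduction, perhaps compressed to a single sentence noting that $\ell(M)$ is a proper nonempty path component of $\mathcal{H}_\infty$.
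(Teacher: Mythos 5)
Your proposal is correct and is exactly the deduction the paper intends: the paper simply declares the corollary "immediate" from Theorem \ref{introthm-GGluePathComp}, and your filling in of the two small details (existence of a symmetric infinite type $(G,N)$-glued hyperbolic $3$-manifold, and properness of $\ell(M)$ in $\mathcal{H}_\infty$ via a point like $(\mathbb{H}^3,\mathcal{O})$) is the right way to make that explicit.
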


Our analysis here combines elements  of the classical theory of deformations of infinite volume hyperbolic $3$-manifolds with the study of Chabauty spaces of subgroups of Lie groups. Questions similar to ours have been been considered in each of these settings. In particular, Warakkagun \cite{WAR} has examined the two-dimensional version of our setting, proving that the Chabauty space of torsion free discrete subgroups of $\mathrm{PSL}_2\R$ is path connected, in direct contrast to Corollary \ref{introcor-notpathconn}. Additionally, Baik and Clavier \cite{BAIK}, have studied geometric limits of cyclic subgroups of $\mathrm{PSL}_2\R$, later generalizing to geometric limits of abelian subgroup of $\mathrm{PSL}_2\C$ \cite{BAIK2}. Biringer, Lazarovich, and Leitner \cite{IANCHAB} have further analyzed the space of closed subgroups of $\mathrm{PSL}_2\R$, focusing on global topology. We also note that a result of Fr\c{a}czyk and Gelander \cite{FRAGEL} implies that the space of discrete subgroups of $\mathrm{SL}_n\R$ for $n\geq 3$ that act with infinite covolume on the symmetric space $\mathrm{SL}_n\R/\mathrm{SO}(n)$ is connected, providing a result analogous to Theorem \ref{introthm-connectedcomponents}.\par\par

The classical study of deformation spaces of infinite volume hyperbolic $3$-manifolds focuses on the space $\AH(\Gamma)$ of conjugacy classes of discrete and faithful representations of a Kleinian group $\Gamma$ into $\mathrm{PSL}_2\C$ (see Section \ref{Sec-Reps}). Sullivan's rigidity theorem \cite{SUL} identifies the components of the interior of $\AH(\Gamma)$. Anderson and Canary \cite{ACBumping} show that the intersection of closures of components of the interior of $\AH(\Gamma)$ may be non-empty in a phenomenon called self-bumping, which they study more thoroughly with McCullough in \cite{ACM}. Bromberg \cite{KEN}, and later Brock, Bromberg, Canary, and Minsky \cite{BBCM}, study local connectivity in $\AH(\Gamma)$.\par

We now describe the organization of the rest of the paper. In Section \ref{sec-background} we give background information on Kleinian groups, hyperbolic $3$-manifolds, the geometric topology, and spaces of representations of Kleinian groups. Section \ref{subsec-comps} focuses on determining the connected components of $\mathcal{H}$, in particular proving Theorem \ref{Thm-connectedcomponents}. Section \ref{subsec-denseleaf} constructs a manifold $M$ so that the leaf $\ell(M)$ is dense in $\mathcal{H}_\infty$, proving Theorem \ref{thm-denseleaf}. In Section \ref{sec-paths}, we turn our attention to paths. Examples of paths, including those in Theorem \ref{introthm-NonTameLimit}, are produced in Section \ref{subsec-pathexamples}. The basic machinery for discussing paths in $\mathcal{D}$ is developed in Section \ref{subsec-PathConstraints}, in particular constructing the maps $J_{s,t}$. Finally, we conclude with Section \ref{subsec-pathcomps}, in which we define $(G,N)$-glued hyperbolic $3$-manifolds and prove Theorem \ref{Thm-GGluePathComp}, establishing Corollary \ref{cor-notpathconn} that $\mathcal{H}_\infty$ is not path connected.\par

\vspace{0.1in}

\textbf{Acknowledgments:} The author would like to thank Ian Biringer for numerous conversations and suggestions, and for his unending kindness. The author thanks Tommaso Cremaschi and Andrew Yarmola for allowing him to include a result from a forthcoming paper \cite{CREMYAR}, appearing here as Proposition \ref{Prop-GGluedUnique}. The author also thanks Cremaschi for helpful conversations.


\section{Background}
\label{sec-background}

Throughout the paper, all manifolds considered are connected and oriented. Any hyperbolic manifold will be complete unless otherwise noted.


\subsection{Kleinian groups and hyperbolic 3-manifolds}

\label{KGroupBackground}
Identifying the Riemann sphere with the ideal boundary $S^2_\infty$ of $\mathbb{H}^3$, Poincar\'{e} extensions of M\"{o}bius transformations allow us to identify $\mathrm{PSL}_2\C$ and $\mathrm{Isom}^+(\mathbb{H}^3)$, the group of orientation preserving isometries of hyperbolic $3$-space. A \textbf{Kleinian group} is a discrete and torsion free subgroup of $\mathrm{PSL}_2\mathbb{C}$; note that in many references a Kleinian group is not required to be torsion free, but we include this in our definition. We let $\mathcal{D}$ denote the set of Kleinian groups: \[\mathcal{D}=\{\Gamma\leq\mathrm{PSL}_2\C \;|\; \Gamma\text{ is discrete and torsion free}\}.\]\par

A \textbf{hyperbolic }$3$\textbf{-manifold} $M$ is the quotient $\mathbb{H}^3/\Gamma$ for some Kleinian group $\Gamma$. From $\Gamma$, $M$ inherits a $(\mathrm{PSL}_2\C,\mathbb{H}^3)$-structure, in the language of $(G,X)$-structures (see \cite{THUR}, \cite{BEN}). For any $p\in M$, we obtain a \textbf{holonomy representation} of $\pi_1(M,p)$ with respect to $\Gamma$, an isomorphism $\Psi:\pi_1(M,p)\rightarrow \Gamma$. The holonomy representation $\Psi$ is unique up to conjugation by $\Gamma$, and is determined by choosing a $\Gamma$-lift of $p$ to $\mathbb{H}^3$. Hyperbolic $3$-manifolds will be identified if there is an orientation preserving isometry between them.\par

For a Kleinian group $\Gamma$, the \textbf{limit set} $\Lambda(\Gamma)$ of $\Gamma$ is constructed by selecting a point $x\in\mathbb{H}^3$ and letting $\Lambda(\Gamma)=\overline{\Gamma \cdot x}\cap S^2_\infty$. This definition is independent of the choice of $x$. Additionally, define the \textbf{domain of discontinuity} of $\Gamma$ as $\Omega(\Gamma)=S^2_\infty\backslash\Lambda(\Gamma)$, and  recall that $\Omega(\Gamma)$ is the largest subset of $S^2_\infty$ on which $\Gamma$ acts properly discontinuously.\par

Let $\CH(\Gamma)\subseteq\mathbb{H}^3$ be the (hyperbolic) convex hull of $\Lambda(\Gamma)$. Then, for the hyperbolic $3$-manifold $M=\mathbb{H}^3/\Gamma$, we define the \textbf{convex core} of $M$ to be $\CC(M)=\CH(\Gamma)/\Gamma\subseteq M$. $M$ and $\Gamma$ are each called \textbf{geometrically finite} if $\Gamma$ is finitely generated and $\CC(M)$ has finite volume; $M$ and $\Gamma$ are called \textbf{convex cocompact} if $\CC(M)$ is compact. Finally, $M$ and $\Gamma$ are said to be \textbf{elementary} if $|\Lambda(\Gamma)|\leq 2$, which is equivalent to $\Gamma$ being abelian. See \cite{NOTE} and \cite{MAT} for more background on Kleinian groups.\par

A subset $U$ of either $S^2_\infty$ or $\mathbb{H}^3$ is said to be \textbf{precisely invariant} for a subgroup $H\leq\Gamma$ if $\psi(U)=U$ for all $\psi\in H$, and $\psi(U)\cap U=\varnothing$ for all $\psi\in\Gamma\backslash H$. For $p\in M=\mathbb{H}^3/\Gamma$, the \textbf{injectivity radius} of $M$ at $p$ is \[\mathrm{inj}_M(p) = \sup\{R>0 : B_{\mathbb{H}^3}(\tilde{p},R)\text{ is precisely invariant for }\{\mathds{1}\}\text{ in }\Gamma\}\] where $\tilde{p}\in\mathbb{H}^3$ is a $\Gamma$-lift of $p$ and $\mathds{1}\in\mathrm{PSL}_2\C$ is the identity. In other words, $\mathrm{inj}_M(p)$ is the supremal radius of hyperbolic balls centered at $p\in M$ that isometrically embed into $M$. \par

A \textbf{frame} $f$ for a hyperbolic $3$-manifold $M$ at $p\in M$ is an ordered positively oriented orthonormal basis for  the tangent space $T_pM$. For a fixed $p\in M$, let $F_pM$ be the space of frames for $M$ at $p$. When we would like to make explicit the basepoint underlying a frame in $F_pM$, we write the frame as $f_p$. Let $\mathcal{F}M$ be the bundle of positively oriented orthonormal frames over $M$. As a set, for any $U\subseteq M$, in particular when $U=M$, \[\mathcal{F}U = \bigcup_{p\in U}F_pM.\] A pair $(M,f)$ where $f\in\mathcal{F}M$ is a \textbf{framed hyperbolic $3$-manifold}. Let \[\mathcal{H}=\{(M,f)\;|\; M \text{ hyperbolic }3\text{-manifold}, f\in \mathcal{F}M\}/\sim\] where $(M,f)\sim (N,h)$ if there is an isometry $\varphi:M\rightarrow N$ such that the induced map $\varphi_*:\mathcal{F}M\rightarrow\mathcal{F}N$ satisfies $\varphi_*(f)=h$. We will refer to equivalence classes $[(M,f)]\in \mathcal{H}$ by their representatives, for example $(M,f)$. Background on $\mathcal{H}$ can be found in \cite{BEN}.\par

Once and for all, fix $O\in\mathbb{H}^3$ and $\mathcal{O}_O\in F_O\mathbb{H}^3$.  We can now define a map $\Phi:\mathcal{D}\rightarrow \mathcal{H}$ by \[\Phi(\Gamma)=(\mathbb{H}^3/\Gamma,\pi_\Gamma(\mathcal{O}_O))\] where $\pi_\Gamma:\mathcal{F}\mathbb{H}^3\rightarrow\mathcal{F}(\mathbb{H}^3/\Gamma)$ is the natural projection. As discussed in \cite[E.1.9]{BEN}, $\Phi$ is in fact a bijection, which is a primary reason for working with framed hyperbolic $3$-manifolds, rather than, say, pointed hyperbolic $3$-manifolds.\par

\subsection{The geometric topology}
\label{subsec-GeometricTopology}

We will describe a topology on $\mathcal{H}$. First, we make the following definition:

\begin{defn}
    \label{eRclosedef2}
   Let $(M,f_p)=(\mathbb{H}^3/\Gamma_1,\pi_{\Gamma_1}(\mathcal{O}_O))$ and $(N,h_q)=(\mathbb{H}^3/\Gamma_2,\pi_{\Gamma_2}(\mathcal{O}_O))$ be framed hyperbolic $3$-manifolds. We say $(N,h_q)$ is $(\varepsilon,R)$\textbf{-close} to $(M,f_p)$ if there is a $(1+\varepsilon)$-bilipshitz embedding $\tilde{g}:B_{\mathbb{H}^3}(O,R)\rightarrow\mathbb{H}^3$ such that 
    \begin{enumerate}
        \item $\tilde{g}(O)=O$,
         \item $D_{C^{0}}(\tilde{g},\mathds{1}|_{B_{\mathbb{H}^3}(O,R)})<\varepsilon$, and
        \item $\tilde{g}$ descends to an embedding $g:B_M(p,R)\rightarrow N$.
    \end{enumerate}
\end{defn}

Here, for $U\subseteq\mathbb{H}^3$ and $g_1,g_2:U\rightarrow\mathbb{H}^3$,

\[D_{C^0}(g_1,g_2)=\sup_{z\in U}d_{\mathbb{H}^3}(g_1(z),g_2(z)).\]

\begin{defn}
\label{geomtopdef}
    The \textbf{geometric topology} on $\mathcal{H}$ is the topology generated by taking the collection of sets of the form 
    \[\{(N,q)\in\mathcal{H}\;|\;(N,q) \text{ is }(\varepsilon,R)\text{-close to }(M,p)\},\]
    where $(M,p)\in\mathcal{H}$ and $\varepsilon,R>0$ as a subbasis.
\end{defn}

Similar, definitions of $(\varepsilon,R)$-close are given in \cite{BEN}, \cite{NOTE}, and \cite{PUR}. This topology is metrizable (see \cite[E.1.4]{BEN} and \cite{IANMET}). The following lemma provides a type of transitivity for being $(\varepsilon,R)$-close, and follows immediately from Definition \ref{eRclosedef2}.\par

\begin{lemma}
    \label{lemma-eRTransitivity}
    Suppose $(N,h_q)$ is $(\varepsilon,R)$-close to $(M,f)$, and that for some hyperbolic $3$-manifold $N'$, there exists an isometric embedding $\iota: B_N(q,R)\rightarrow N'$. Then, $(N',\iota_*(h))$ is $(\varepsilon,R)$-close to $(M,f)$, where $\iota_*:\mathcal{F}B_N(q,R)\rightarrow\mathcal{F}N'$ is the induced map on frame bundles.
\end{lemma}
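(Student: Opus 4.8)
The plan is to unwind Definition~\ref{eRclosedef2} directly and check that the same bilipschitz map $\tilde g$ certifying that $(N,h_q)$ is $(\varepsilon,R)$-close to $(M,f)$ also certifies that $(N',\iota_*(h))$ is $(\varepsilon,R)$-close to $(M,f)$. First I would write $(M,f)=(\mathbb{H}^3/\Gamma_1,\pi_{\Gamma_1}(\mathcal{O}_O))$ and $(N,h_q)=(\mathbb{H}^3/\Gamma_2,\pi_{\Gamma_2}(\mathcal{O}_O))$ as in the definition, and let $\tilde g\colon B_{\mathbb{H}^3}(O,R)\to\mathbb{H}^3$ be a $(1+\varepsilon)$-bilipschitz embedding satisfying properties (1)--(3), so in particular $\tilde g$ descends to an embedding $g\colon B_M(p,R)\to N$ whose image, since $g$ is a local isometry on balls, lands in $B_N(q,R)$ (the image of a metric ball of radius $R$ under a distance-nonincreasing-on-the-nose map sending $p\mapsto q$ is contained in the metric ball of radius $R$ about $q$; more carefully, $g$ restricted to $B_M(p,R)$ is a local isometry, hence $1$-Lipschitz, so $d_N(g(x),q)\le d_M(x,p)<R$).

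Next, I would produce the framing. To express $(N',\iota_*(h))$ in the normalized form required by Definition~\ref{eRclosedef2}, I use the bijection $\Phi\colon\mathcal{D}\to\mathcal{H}$: there is a Kleinian group $\Gamma_3$ with $(N',\iota_*(h))=(\mathbb{H}^3/\Gamma_3,\pi_{\Gamma_3}(\mathcal{O}_O))$. Concretely, since $\iota\colon B_N(q,R)\to N'$ is an isometric embedding, lifting to $\mathbb{H}^3$ and composing with the universal covers, the composite isometric embedding $B_{\mathbb{H}^3}(O,R)\xrightarrow{\tilde g}\mathbb{H}^3/\!\!\sim\;\hookrightarrow N'$ lifts (choosing lifts so that the frame $\mathcal{O}_O$ is sent to $\iota_*(h)$ at the basepoint) to a map $\widetilde{\iota\circ g}$; what matters is that the \emph{same} $\tilde g$ on $B_{\mathbb{H}^3}(O,R)$ works. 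Then I verify the three conditions: (1) $\tilde g(O)=O$ is unchanged; (2) $D_{C^0}(\tilde g,\mathds{1}|_{B_{\mathbb{H}^3}(O,R)})<\varepsilon$ is unchanged, since it is a statement purely about $\tilde g$ as a map $B_{\mathbb{H}^3}(O,R)\to\mathbb{H}^3$; (3) the descended map is now $\iota\circ g\colon B_M(p,R)\to N'$, which is an embedding because $g$ is an embedding of $B_M(p,R)$ into $B_N(q,R)$ and $\iota$ is an isometric embedding of $B_N(q,R)$ into $N'$, hence injective; that $\iota\circ g$ is the descent of $\tilde g$ to $N'$ follows from the compatibility of lifts, using that $\pi_{\Gamma_3}=\iota\circ\pi_{\Gamma_2}'$ on the relevant ball where $\pi_{\Gamma_2}'$ is the restriction of the universal covering of $N$.

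The only genuinely delicate point, and the one I would slow down on, is bookkeeping the frames and covering-space lifts so that $(N',\iota_*(h))$ really is in the normalized form $(\mathbb{H}^3/\Gamma_3,\pi_{\Gamma_3}(\mathcal{O}_O))$ with the \emph{same} $\tilde g$ descending to $\iota\circ g$; this is where one has to be careful that the induced map $\iota_*$ on frame bundles sends $h$ to the frame over $\iota(q)$ obtained by pushing $\mathcal{O}_O$ through $\tilde g$ and then projecting, rather than something differing by an isometry of $N'$. Once that identification is made, everything else is immediate from the definitions, and indeed the lemma follows ``immediately from Definition~\ref{eRclosedef2}'' as claimed. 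I would therefore present the argument tersely: fix the certifying $\tilde g$ for $(N,h_q)$, observe $g(B_M(p,R))\subseteq B_N(q,R)$ so that $\iota\circ g$ is defined and is an embedding, note that $\iota_*(h)$ is by construction the frame that makes $\tilde g$ descend to $\iota\circ g$, and conclude that $\tilde g$ witnesses $(\varepsilon,R)$-closeness of $(N',\iota_*(h))$ to $(M,f)$.
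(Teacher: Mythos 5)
Your overall strategy is exactly the intended one (the paper offers no proof, asserting the lemma is immediate from Definition \ref{eRclosedef2}): keep the same certifying map $\tilde g$, observe that conditions (1) and (2) concern only $\tilde g$ as a map $B_{\mathbb{H}^3}(O,R)\to\mathbb{H}^3$, and replace the descended embedding $g$ by $\iota\circ g$. Your attention to the covering-space/frame bookkeeping (that the lift of $\iota$ taking $\mathcal{O}_O$ to $\mathcal{O}_O$ is forced to be the identity, so $\pi_{\Gamma_3}=\iota\circ\pi_{\Gamma_2}$ on the relevant region and $\tilde g$ really does descend to $\iota\circ g$) is correct and is the only point that genuinely requires care in the normalization.

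However, your justification that $g(B_M(p,R))\subseteq B_N(q,R)$ is wrong: $g$ is the descent of a $(1+\varepsilon)$-bilipschitz map, not a local isometry, so it is not $1$-Lipschitz. The actual bounds available are $d_N(g(x),q)\le d_{\mathbb{H}^3}(\tilde g(\tilde x),O)\le d_{\mathbb{H}^3}(\tilde g(\tilde x),\tilde x)+d_{\mathbb{H}^3}(\tilde x,O)<\varepsilon+R$ (from condition (2)), or $<(1+\varepsilon)R$ (from the Lipschitz constant); neither gives containment in $B_N(q,R)$. Since $\iota$ is only hypothesized on $B_N(q,R)$, the composite $\iota\circ g$ need not be defined on all of $B_M(p,R)$, so this step would fail as written. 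The defect is partly inherited from the lemma's statement and is harmless in every application in the paper (there $\iota$ is always the restriction of an isometric embedding of a strictly larger region, e.g.\ a scooped manifold or convex core containing the $R$-ball, so the composition extends), but a careful proof must either invoke such an extension of $\iota$, or restrict $\tilde g$ to a slightly smaller ball and accept a conclusion of $(\varepsilon,R')$-closeness for $R'=R/(1+\varepsilon)$ (which is all that is ever needed for the limiting arguments). You should state which of these repairs you are making rather than asserting the containment via a $1$-Lipschitz property $g$ does not have.
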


If $\mathcal{D}$ is equipped with the topology inherited from the Chabauty topology (see \cite[Section E]{BEN}) on closed subsets of $\mathrm{PSL}_2\C$, the map $\Phi$ is a homeomorphism. For clarity, we will therefore also refer to the Chabauty topology on $\mathcal{D}$ as the geometric topology. The following fact (see \cite[Proposition E.1.2]{BEN}) describes sequential convergence in the geometric topology on $\mathcal{D}$.

\begin{fact}
\label{ChabProps}
    A sequence $\{\Gamma_n\}\subseteq\mathcal{D}$ converges to $\Gamma$ in the geometric topology on $\mathcal{D}$ if and only if the following two conditions hold:
    \begin{enumerate}
        \item If $\psi\in\mathrm{PSL}_2\C$ is an accumulation point of a sequence $\psi_n\in\Gamma_n$, then $\psi\in\Gamma$.
        \item For all $\psi\in\Gamma$, there exists a sequence $\psi_n\in\Gamma_n$ so that $\psi_n\rightarrow\psi$ in $\mathrm{PSL}_2\C$.
    \end{enumerate}
    The map $\Phi:\mathcal{D}\rightarrow\mathcal{H}$ is a homeomorphism between the geometric topologies on these sets.
\end{fact}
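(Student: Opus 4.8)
The plan is to derive the sequential characterization directly from the definition of the Chabauty topology on the space $\mathcal{C}(\text{PSL}_2\C)$ of closed subsets of $\text{PSL}_2\C$, and then to obtain the homeomorphism statement by comparing Chabauty convergence of groups with the $(\varepsilon,R)$-close relation on $\mathcal{H}$.

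The Chabauty topology on $\mathcal{C}(G)$, for $G$ locally compact and second countable, has a subbasis consisting of the sets $\{C : C\cap K=\varnothing\}$ with $K\subseteq G$ compact together with the sets $\{C : C\cap V\neq\varnothing\}$ with $V\subseteq G$ open. Since $\text{PSL}_2\C$ is second countable this topology is first countable (in fact metrizable), so it is detected by sequences, and it suffices to check the two implications. If $\Gamma_n\to\Gamma$, then (2) follows by applying the definition of convergence to the neighborhoods $\{C:C\cap U_k\neq\varnothing\}$ of $\Gamma$, where $\{U_k\}$ is a neighborhood basis at a given $\psi\in\Gamma$, and then diagonalizing over $k$; and (1) holds because if an accumulation point $\psi$ of some $\psi_n\in\Gamma_n$ failed to lie in $\Gamma$, then, $\Gamma$ being closed, a compact neighborhood $K$ of $\psi$ would miss $\Gamma$, forcing $\Gamma_n\cap K=\varnothing$ for $n$ large and contradicting $\psi_{n_k}\to\psi$ along a subsequence. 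Conversely, assuming (1) and (2), one checks that $\Gamma_n$ eventually lies in each subbasic neighborhood of $\Gamma$: for $\{C:C\cap V\neq\varnothing\}$ apply (2) to any point of $\Gamma\cap V$; for $\{C:C\cap K=\varnothing\}$, if this failed along a subsequence then compactness of $K$ would yield $\psi_{n_k}\in\Gamma_{n_k}\cap K$ converging to a point which lies in $\Gamma$ by (1), contradicting $\Gamma\cap K=\varnothing$.

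For the homeomorphism statement, $\Phi$ is a bijection by \cite[E.1.9]{BEN}, and since both spaces are metrizable it is enough to show that $\Phi$ and $\Phi^{-1}$ preserve convergent sequences. Suppose $\Gamma_n\to\Gamma$ in $\mathcal{D}$ and fix $\varepsilon,R>0$. The set $S=\{\gamma\in\Gamma : d_{\mathbb{H}^3}(O,\gamma O)\leq 2R+1\}$ is finite by discreteness; by (2) each $\gamma\in S$ is a limit $\gamma=\lim_n\gamma^{(n)}$ with $\gamma^{(n)}\in\Gamma_n$, and by (1) together with a compactness argument, for $n$ large every element of $\Gamma_n$ moving $O$ a distance at most $2R+1$ lies close to some element of $S$. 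A Dirichlet-domain / geometric-limit argument then converts this approximate matching of the relevant finite part of the $\Gamma$-orbit of $O$ into a $(1+\varepsilon)$-bilipschitz embedding $\tilde g:B_{\mathbb{H}^3}(O,R)\to\mathbb{H}^3$ that is $\varepsilon$-close to the identity in $C^0$ and descends to an embedding $B_{\mathbb{H}^3/\Gamma}(p,R)\hookrightarrow\mathbb{H}^3/\Gamma_n$, so that $\Phi(\Gamma_n)$ is $(\varepsilon,R)$-close to $\Phi(\Gamma)$. Conversely, given $(\varepsilon,R)$-closeness of $\Phi(\Gamma_n)$ to $\Phi(\Gamma)$ for every $\varepsilon$ and $R$, lifting the embeddings to $\mathbb{H}^3$ and reading off how they approximately conjugate the finitely many elements of $\Gamma$ relevant on a ball of appropriate radius recovers conditions (1) and (2), hence $\Gamma_n\to\Gamma$.

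The main obstacle is the bookkeeping in the homeomorphism step: one must argue that Chabauty convergence of the groups forces the quotient manifolds to be genuinely bilipschitz close on compact balls. This requires simultaneously controlling translation lengths and matching the relevant portion of the $\Gamma$-orbit of $O$ by the $\Gamma_n$-orbit, while allowing for the usual thin-part phenomena (short geodesics of $\mathbb{H}^3/\Gamma_n$ converging geometrically to cusps of $\mathbb{H}^3/\Gamma$). This is exactly the content distinguishing ``homeomorphism'' from ``continuous bijection,'' and is where the Dirichlet-domain comparison does the real work.
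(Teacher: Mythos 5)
The paper does not actually prove this statement: it is recorded as a Fact with a citation to \cite[Proposition E.1.2]{BEN} (and to \cite[E.1.9]{BEN} for the bijectivity of $\Phi$), so there is no in-paper argument to compare against. Judged on its own terms, the first half of your proposal --- the sequential characterization of Chabauty convergence from the subbasis $\{C:C\cap K=\varnothing\}$, $\{C:C\cap V\neq\varnothing\}$ --- is correct and complete; both implications are argued properly.

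The homeomorphism half, however, has a genuine gap beyond ``bookkeeping.'' In the direction ``$\Gamma_n\to\Gamma$ Chabauty implies $(\varepsilon,R)$-closeness,'' conditions (1) and (2) alone do not rule out that two \emph{distinct} elements $\psi_n\neq\varphi_n$ of $\Gamma_n$ converge to the same element of $\Gamma$; if that happened, $\psi_n^{-1}\varphi_n$ would be a nontrivial element of $\Gamma_n$ tending to $\mathds{1}$ (which is consistent with condition (1), since $\mathds{1}\in\Gamma$), the injectivity radius of $\mathbb{H}^3/\Gamma_n$ at the basepoint would collapse, and your map $\tilde g$ could not descend to an \emph{embedding} of $B_M(p,R)$. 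Excluding this requires a Zassenhaus/Margulis-type argument --- discreteness of the limit group together with the behavior of the exponential map near $\mathds{1}$ forces $|\Gamma_n\cap U|=1$ for a suitable neighborhood $U$ of each $\psi\in\Gamma$ and large $n$; this is precisely the content of the paper's Lemma \ref{Unbhd}, which is proved \emph{from} Fact \ref{ChabProps} but whose analogue you would need \emph{inside} your proof of the homeomorphism. Your sketch asserts only that elements of $\Gamma_n$ moving $O$ a bounded distance lie close to elements of $S$, which is the easy half of the matching; the injectivity of the matching is the half that does the work, and it is not addressed. Relatedly, the phrase ``a Dirichlet-domain / geometric-limit argument then converts this approximate matching into a $(1+\varepsilon)$-bilipschitz embedding'' defers exactly the construction that distinguishes this Fact from a triviality, so as written the proposal establishes the purely point-set half of the statement but not the equivalence of the Chabauty and $(\varepsilon,R)$ formulations.
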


Note that for a convergent sequence $\Gamma_n\rightarrow\Gamma$, the groups $\Gamma_n$ are not required to be isomorphic to each other or to $\Gamma$. Often, the geometric topology is discussed in the context of sequences of representations of a fixed group, but this is not the case here.\par

It will be useful to have the following lemma recorded.

\begin{lemma}
\label{fglim}
    Any Kleinian group is the geometric limit of a sequence of finitely generated subgroups of itself.
\end{lemma}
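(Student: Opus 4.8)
The plan is to exhibit, for a given Kleinian group $\Gamma$, an increasing sequence of finitely generated subgroups $\Gamma_n \leq \Gamma$ whose union is all of $\Gamma$, and to check that such a sequence converges to $\Gamma$ in the geometric topology using the criterion of Fact \ref{ChabProps}. First I would enumerate a countable generating set: since $\Gamma$ is a discrete (hence countable) subgroup of $\text{PSL}_2\C$, write $\Gamma = \{\psi_1, \psi_2, \psi_3, \dots\}$ and set $\Gamma_n = \langle \psi_1, \dots, \psi_n \rangle$. Then each $\Gamma_n$ is finitely generated, $\Gamma_n \leq \Gamma_{n+1} \leq \Gamma$, and $\bigcup_n \Gamma_n = \Gamma$. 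Each $\Gamma_n$ is a subgroup of the discrete torsion-free group $\Gamma$, hence is itself discrete and torsion free, so $\Gamma_n \in \mathcal{D}$.

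Next I would verify the two conditions of Fact \ref{ChabProps} for the sequence $\{\Gamma_n\}$ and the candidate limit $\Gamma$. For condition (2): given $\psi \in \Gamma$, we have $\psi = \psi_N$ for some $N$, so $\psi \in \Gamma_n$ for all $n \geq N$; taking the constant sequence $\psi_n = \psi$ for $n \geq N$ (and anything, say the identity, for $n < N$) gives a sequence in $\Gamma_n$ converging to $\psi$. For condition (1): suppose $\psi \in \text{PSL}_2\C$ is an accumulation point of a sequence $\psi_n \in \Gamma_n$, i.e.\ $\psi_{n_k} \to \psi$ along some subsequence. Since $\Gamma_n \leq \Gamma$ for all $n$, each $\psi_{n_k} \in \Gamma$, and $\Gamma$ is closed in $\text{PSL}_2\C$ (being discrete), so the limit $\psi$ lies in $\Gamma$. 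This establishes $\Gamma_n \to \Gamma$ geometrically.

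There is essentially no serious obstacle here; the only point requiring any care is the observation that a discrete subgroup of $\text{PSL}_2\C$ is countable, so that the enumeration in the first step is legitimate. This follows because $\text{PSL}_2\C$ is second countable (indeed a manifold) and any discrete subset of a second countable space is countable. Everything else is a direct unwinding of the Chabauty convergence criterion, so the lemma follows.
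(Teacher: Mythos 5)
Your proof is correct and follows exactly the paper's argument: enumerate $\Gamma=\{\psi_1,\psi_2,\dots\}$, set $\Gamma_n=\langle\psi_1,\dots,\psi_n\rangle$, and verify the two conditions of Fact \ref{ChabProps} using that $\Gamma$ is closed and that each $\psi\in\Gamma$ eventually lies in $\Gamma_n$. The added remark on countability of discrete subgroups is a fine (if routine) point of care.
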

\begin{proof}
For a Kleinian $\Gamma$, enumerate $\Gamma=\{\psi_1,\psi_2,...\}$ and let $\Gamma_n=\langle\psi_1,...,\psi_n\rangle$. We will show that the sequence $\{\Gamma_n\}$ converges geometrically to $\Gamma$.\par

We just need to check the two conditions in Fact \ref{ChabProps}. Observe that the first condition holds since $\Gamma$ is a closed subset of $\mathrm{PSL}_2\mathbb{C}$ and $\Gamma_n\subseteq\Gamma$ for all $n$. The second condition is immediate, since any $\psi\in\Gamma$ is contained in $\Gamma_n$ for $n$ sufficiently large.
\end{proof}

\subsection{Discrete and faithful representations}
\label{Sec-Reps}

Let $\Gamma$ be a nonelementary Kleinian group. An injective homomorphism $\rho:\Gamma\rightarrow\mathrm{PSL}_2\C$ whose image is a Kleinian group is called a \textbf{discrete and faithful} representation of $\Gamma$ into $\mathrm{PSL}_2\C$, and we let $D(\Gamma)\subseteq\mathrm{Hom}(\Gamma,\mathrm{PSL}_2\C)$ denote the set such representations. Equip $D(\Gamma)$ with the \textbf{algebraic topology} in which a sequence of representations $\{\rho_n\}\subseteq D(\Gamma)$ algebraically converges to $\rho\in D(\Gamma)$ if \[\lim_n \rho_n(\psi)=\rho(\psi)\] for all $\psi\in\Gamma$, where convergence of the limit is in $\mathrm{PSL}_2\C$. An algebraically convergent sequence $\rho_n\rightarrow\rho$ in $D(\Gamma)$ is said to converge \textbf{strongly} if the images of the representations converge geometrically: that is, $\rho_n(\Gamma)\rightarrow\rho(\Gamma)$ geometrically in $\mathcal{D}$. We let $S(\Gamma)$ denote the set $D(\Gamma)$ now equipped with the topology of strong convergence. \par

Let $\AH(\Gamma)=D(\Gamma)/\text{conj}$ denote the quotient of $D(\Gamma)$ by the conjugation action of $\mathrm{PSL}_2\C$, and equip $\AH(\Gamma)$ with the topology inherited from the algebraic topology on $D(\Gamma)$. This topology on $\AH(\Gamma)$ is also referred to as the algebraic topology. Note that a sequence $\{[\rho_n]\}\subseteq \AH(\Gamma)$ algebraically converges to $[\rho]$ if and only if there exists a sequence $\{\psi_n\}\subseteq\mathrm{PSL}_2\C$ such that $\psi_n\rho_n\psi_n^{-1}$ converges to $\rho$ in $D(\Gamma)$. See \cite{MAT} for more details on the algebraic topology.\par

A representation $\rho\in D(\Gamma)$ is type preserving if $\rho(\psi)$ is parabolic exactly when $\psi\in\Gamma$ is parabolic, and we let $D_t(\Gamma)\subseteq D(\Gamma)$ be the subspace of type preserving representations. If $\Gamma$ is geometrically finite, we let $D_{\GF}(\Gamma)$ denote the component of the interior of $D_t(\Gamma)$ containing the identity mapping. It follows from the Marden \cite{MAR} and Sullivan stability theorems \cite{SUL} that $D_{\GF}(\Gamma)$ consists of all representations of $\Gamma$ whose image is a geometrically finite Kleinian group that is quasiconformally conjugate to $\Gamma$: that is, there exists a quasiconformal homeomorphism $F:S^2_\infty\rightarrow S^2_\infty$ such that for all $\psi\in\Gamma$, $\rho(\psi)=F\circ\psi\circ F^{-1}$. If $\Gamma$ is convex cocompact, the image of every representation in $D_{\GF}(\Gamma)$ is convex cocompact. We let $\GF(\Gamma)=D_{\GF}(\Gamma)/\text{conj}$ denote the projection of $D_{\GF}(\Gamma)$ to $\AH(\Gamma)$. \par

We will record a general fact about representations of nonelementary Kleinian groups, which follows from the fact that $\mathrm{PSL}_2\C$ acts uniquely triply transitively on $S^2_\infty$.

\begin{fact}
    \label{Fact-UniqueConjugator}
    Suppose $\rho_1,\rho_2\in D(\Gamma)$ represent the same conjugacy class in $\AH(\Gamma)$ for some nonelementary Kleinian group $\Gamma$. Then, there is a unique $\varphi\in\mathrm{PSL}_2\C$ such that $\rho_2=\varphi\rho_1\varphi^{-1}$. Additionally, if $\rho_1,\rho_2$ are perturbed in $D(\Gamma)$ while remaining conjugate, then $\varphi$ varies continuously.
\end{fact}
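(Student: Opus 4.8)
The plan is to treat existence, uniqueness, and continuity separately; only the last two carry content. Existence is immediate: $[\rho_1]=[\rho_2]$ in $AH(\Gamma)=D(\Gamma)/\text{conj}$ means, by definition of the quotient, that $\rho_2=\varphi\rho_1\varphi^{-1}$ for some $\varphi\in\text{PSL}_2\C$.

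For uniqueness I would suppose $\varphi_1$ and $\varphi_2$ both conjugate $\rho_1$ to $\rho_2$ and set $\xi=\varphi_2^{-1}\varphi_1$, so that $\xi$ centralizes $\rho_1(\Gamma)$. Since $\Gamma$ is nonelementary and $\rho_1$ is injective, $\rho_1(\Gamma)$ is a nonelementary Kleinian group and hence contains two loxodromic elements $h_1,h_2$ with disjoint fixed point pairs $\{a^+,a^-\},\{c^+,c^-\}\subseteq S^2_\infty$ (a standard fact about nonelementary Kleinian groups). As $\xi$ commutes with each $h_i$ it preserves the ordered attracting/repelling fixed point data of $h_i$, so it fixes the three distinct points $a^+,a^-,c^+$; sharp triple transitivity of the $\text{PSL}_2\C$-action on $S^2_\infty$ then forces $\xi=\mathds{1}$, i.e.\ $\varphi_1=\varphi_2$.

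Continuity I would establish locally. Given a continuous path $s\mapsto(\rho_1^s,\rho_2^s)$ of conjugate pairs in $D(\Gamma)\times D(\Gamma)$ with conjugator $\varphi^s$, fix $s_0$ and choose $\psi_1,\psi_2\in\Gamma$ with $\rho_1^{s_0}(\psi_1),\rho_1^{s_0}(\psi_2)$ loxodromic and with disjoint fixed points, exactly as in the uniqueness step. The loxodromic locus in $\text{PSL}_2\C$ is open, and the attracting and repelling fixed points of a loxodromic element depend continuously on the element, so for $s$ near $s_0$ the elements $\rho_1^s(\psi_j)$ and their conjugates $\rho_2^s(\psi_j)=\varphi^s\rho_1^s(\psi_j)(\varphi^s)^{-1}$ remain loxodromic with four distinct fixed points in total, all varying continuously in $s$. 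Then $\varphi^s$ is the unique Möbius transformation carrying the ordered triple of distinct points (attracting point of $\rho_1^s(\psi_1)$, repelling point of $\rho_1^s(\psi_1)$, attracting point of $\rho_1^s(\psi_2)$) to the corresponding triple for $\rho_2^s$; since the assignment sending a pair of ordered triples of distinct points to the unique Möbius transformation between them is continuous, $s\mapsto\varphi^s$ is continuous near $s_0$, hence continuous.

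The step needing the most care is the choice of loxodromic test elements in the continuity argument: because type preservation is not assumed, the set of $\psi\in\Gamma$ with loxodromic image can move along the path, so $\psi_1,\psi_2$ cannot be fixed globally; running the argument in a neighborhood of each $s_0$ and invoking openness of the loxodromic locus circumvents this. The remaining ingredients — a nonelementary Kleinian group contains loxodromics with disjoint axes, and the $\text{PSL}_2\C$-action on $S^2_\infty$ is sharply triply transitive with continuous dependence of the conjugator on the triples — are standard.
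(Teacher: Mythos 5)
Your proposal is correct and follows essentially the same route as the paper, which derives uniqueness from the sharp triple transitivity of the $\text{PSL}_2\C$-action on $S^2_\infty$ applied to fixed points of group elements, and continuity from expressing $\varphi$ as a continuous function of those fixed points, which in turn depend continuously on the isometries. Your write-up just fills in the standard details (existence of two loxodromics with disjoint fixed-point pairs in a nonelementary group, and the local choice of test elements) that the paper leaves implicit.
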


The continuity claim at the end of the fact follows from writing $\varphi$ as a continuous function of the fixed points of some elements of each $\rho_i(\Gamma)$, the fixed points being continuous functions of the isometries themselves. From this fact, we can prove the following lemma.

\begin{lemma}
\label{Lemma-AHNonelemSubgroupConvg}
    For a Kleinian group $\Gamma$, let $[\rho_n]\rightarrow[\rho]$ be a convergent sequence in $\AH(\Gamma)$. Assume that there exists a nonelementary subgroup $H\leq\Gamma$ and a representative $\rho_n\in D(\Gamma)$ of each conjugacy class $[\rho_n]$ so that for some $\rho_H\in D(H)$, we have $\rho_n|_H\rightarrow\rho_H$ in $D(H)$. Then, there exists a representative $\rho$ of $[\rho]$ such that $\rho_n\rightarrow\rho$ in $D(\Gamma)$. In particular, $\rho|_H=\rho_H$.
\end{lemma}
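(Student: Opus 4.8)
The plan is to reduce the lemma to a statement about the conjugators coming from algebraic convergence: I want to show they stay in a compact subset of $\text{PSL}_2\C$ and have a single subsequential limit, which Fact \ref{Fact-UniqueConjugator} will identify.

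First I would unwind the algebraic topology on $AH(\Gamma)$: since $[\rho_n]\to[\rho]$, there are $\psi_n\in\text{PSL}_2\C$ and a representative $\rho'$ of $[\rho]$ with $\psi_n\rho_n\psi_n^{-1}\to\rho'$ in $D(\Gamma)$. Restricting to $H$ gives $\psi_n(\rho_n|_H)\psi_n^{-1}\to\rho'|_H$ in $D(H)$, while by hypothesis $\rho_n|_H\to\rho_H$; note $\rho_H$, $\rho'|_H$, and every $\rho_n|_H$ lie in $D(H)$ and have nonelementary image, since a discrete faithful image of the nonelementary group $H$ is nonelementary. Moreover, for each $n$ the element $\psi_n$ is the \emph{unique} conjugator from $\rho_n|_H$ to $(\psi_n\rho_n\psi_n^{-1})|_H$, because a nonelementary subgroup of $\text{PSL}_2\C$ has trivial centralizer.

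The heart of the argument, and the step I expect to be the main obstacle, is showing $\{\psi_n\}$ is precompact in $\text{PSL}_2\C$ --- i.e. the conjugators do not escape to infinity. Suppose they do; passing to a subsequence, $\psi_n\to b$ locally uniformly on $S^2_\infty\setminus\{a\}$ for some $a,b\in S^2_\infty$. Using that $\rho_H(H)$ and $\rho'|_H(H)$ are both nonelementary, I would choose $g_1,g_2\in H$ so that all four of $\rho_H(g_1),\rho_H(g_2),\rho'|_H(g_1),\rho'|_H(g_2)$ are loxodromic with $\text{Fix}(\rho_H(g_1))\cap\text{Fix}(\rho_H(g_2))=\varnothing$ and $\text{Fix}(\rho'|_H(g_1))\cap\text{Fix}(\rho'|_H(g_2))=\varnothing$; this can be arranged by first taking a (classical) Schottky pair in $\rho_H(H)$, so that every nontrivial element of the corresponding rank-two free subgroup $F\le H$ is $\rho_H$-loxodromic, and then taking a Schottky pair inside $\rho'|_H(F)$. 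For $n$ large, $\rho_n(g_i)$ and its $\psi_n$-conjugate are loxodromic with fixed points converging to those of $\rho_H(g_i)$ and $\rho'|_H(g_i)$ respectively, and $\psi_n$ carries the four (distinct) fixed points of $\rho_n(g_1),\rho_n(g_2)$ bijectively to the four (distinct) fixed points of their conjugates. Since at most one of the four points of $\text{Fix}(\rho_H(g_1))\cup\text{Fix}(\rho_H(g_2))$ equals $a$, at least three of the fixed points of $\rho_n(g_1),\rho_n(g_2)$ have $\psi_n$-images tending to $b$; but those images must tend to three of the four \emph{distinct} points of $\text{Fix}(\rho'|_H(g_1))\cup\text{Fix}(\rho'|_H(g_2))$, which is absurd. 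Hence $\{\psi_n\}$ is precompact.

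To finish: if $\psi_{n_k}\to\varphi$ along a subsequence, then for every $\psi\in H$ we get $(\psi_{n_k}\rho_{n_k}\psi_{n_k}^{-1})(\psi)\to\varphi\rho_H(\psi)\varphi^{-1}$; comparing with the limit $\rho'|_H(\psi)$ shows $\varphi$ conjugates $\rho_H$ to $\rho'|_H$, so by Fact \ref{Fact-UniqueConjugator} applied to the nonelementary group $H$ this $\varphi$ is unique. Thus every subsequential limit of $\{\psi_n\}$ equals $\varphi$, so $\psi_n\to\varphi$. Then $\rho_n=\psi_n^{-1}(\psi_n\rho_n\psi_n^{-1})\psi_n\to\varphi^{-1}\rho'\varphi=:\rho$ in $D(\Gamma)$, and $\rho$ represents $[\rho]$ because $\rho'$ does. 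Finally, restricting gives $\rho_n|_H\to\rho|_H$; combined with $\rho_n|_H\to\rho_H$ and the Hausdorffness of $D(H)$, we conclude $\rho|_H=\rho_H$.
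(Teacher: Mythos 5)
Your proposal is correct and follows essentially the same route as the paper: both arguments hinge on showing the conjugators $\psi_n$ are bounded because $\rho_n|_H$ converges with $H$ nonelementary, and then use the uniqueness of the conjugator from Fact \ref{Fact-UniqueConjugator} to pin down the limit. The only differences are that you supply the fixed-point argument for boundedness of $\{\psi_n\}$ (which the paper asserts without proof) and then deduce $\rho_n\to\varphi^{-1}\rho'\varphi$ directly from $\psi_n\to\varphi$, whereas the paper instead invokes J{\o}rgensen's theorem that $D(\Gamma)$ is closed and argues via subsequential limits of $\{\rho_n\}$.
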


\begin{proof}
    Let $\{\psi_n\}\subseteq\mathrm{PSL}_2\C$ be a sequence so that $\psi_n\rho_n\psi_n^{-1}$ converges in $D(\Gamma)$ to some representation $\rho'$. Since the sequence $\{\rho_n|_H\}$ converges, the sequence $\{\psi_n\}$ must be bounded in $\mathrm{PSL}_2\C$, and therefore the sequence $\{\rho_n\}$ must be bounded in $D(\Gamma)$. A theorem of J{\o}rgensen \cite{JOR} tells us $D(\Gamma)$ is closed in $\mathrm{Hom}(\Gamma,\mathrm{PSL}_2\C)$, so it follows that for any subsequence of $\{\rho_n\}$, there exists a further subsequence that converges to some $\rho\in D(\Gamma)$. Observe that this representation $\rho$ is independent of the initial subsequence chosen: $\rho$ must be conjugate in $\mathrm{PSL}_2\C$ to $\rho'$, and for each initial subsequence it follows that $\rho|_H=\rho_H$, so the observation follows from the uniqueness statement in Fact \ref{Fact-UniqueConjugator}. Since every subsequence of $\{\rho_n\}$ has a subsequence that converges to $\rho$, we have $\rho_n\rightarrow \rho$.
\end{proof}


\section{Connectivity}
\label{sec-connectivity}

This section will focus on connectivity in $\mathcal{H}$. Section \ref{subsec-comps} will focus on establishing that the subspace $\mathcal{H}_\infty=\{(M,f)\;|\;\mathrm{vol}(M)=\infty\}$ is connected, and in Section \ref{subsec-denseleaf} we will construct a non-tame hyperbolic $3$-manifold $M$ such that the leaf $\ell(M)$ (see Section \ref{subsec-comps}) is dense in $\mathcal{H}_\infty$.

\subsection{Connected Components}
\label{subsec-comps}

The goal of this section will be to determine the connected components of $\mathcal{H}$, in particular showing that the subspace $\mathcal{H}_\infty$ is connected. Briefly, the idea will be to use the density theorem to show that there is a path connected subset that is dense in $\mathcal{H}_\infty$.\par

For a hyperbolic $3$-manifold $M$, we have a map $L_M:\mathcal{F}M\rightarrow\mathcal{H}$ such that $L_M(f)=(M,f)$. The set \[\ell(M)=L_M(\mathcal{F}M)=\{(M,f)\in\mathcal{H}\;|\;f\in\mathcal{F}M\}\] is called the \textbf{leaf} of $\mathcal{H}$ corresponding to $M$. Biringer and Ab\'{e}rt discuss a similar leaf decomposition of the space of pointed Riemannian $d$-manifolds in \cite{IAN}. \par

Following from the equivalence relation defining $\mathcal{H}$, $L_M(f)=L_M(h)$ if and only if there is an orientation preserving isometry of $M$ such that the induced map on $\mathcal{F}M$ takes $f$ to $h$. Thus, there is a bijection $\ell(M)\leftrightarrow \mathcal{F}M/\mathrm{Isom}^+(M)$ descending from $L_M$.\par

\begin{prop}
\label{pathprop}

    $L_M$ is continuous for any hyperbolic $3$-manifold $M$.
    
\end{prop}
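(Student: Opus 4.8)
The plan is to reduce the statement to the continuity of a conjugation map on $\mathcal{D}$, exploiting the homeomorphism $\Phi\colon\mathcal{D}\to\mathcal{H}$ from Fact \ref{ChabProps} together with the standard identification of $\mathcal{F}\mathbb{H}^3$ with $\text{PSL}_2\C$. Fix a Kleinian group $\Gamma$ with $\mathbb{H}^3/\Gamma = M$. Since $\text{Isom}^+(\mathbb{H}^3)=\text{PSL}_2\C$ acts freely and transitively on the bundle of positively oriented orthonormal frames, the orbit map $\gamma\mapsto\gamma_*\mathcal{O}_O$ is a homeomorphism $\text{PSL}_2\C\to\mathcal{F}\mathbb{H}^3$. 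Composing it with the covering projection $\mathcal{F}\mathbb{H}^3\to\mathcal{F}M$, which is an open surjection and hence a quotient map, yields a quotient map $q\colon\text{PSL}_2\C\to\mathcal{F}M$, $q(\gamma)=\pi_\Gamma(\gamma_*\mathcal{O}_O)$. Because $q$ is a quotient map, $L_M$ is continuous if and only if $L_M\circ q$ is continuous, so it suffices to analyze the latter.

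Next I would identify $L_M\circ q$ explicitly. Unwinding the definition of $\Phi$ and the equivalence relation defining $\mathcal{H}$: the isometry $\mathbb{H}^3/(\gamma^{-1}\Gamma\gamma)\to\mathbb{H}^3/\Gamma$ induced by $\gamma$ sends the base point $\pi_{\gamma^{-1}\Gamma\gamma}(O)$ to $\pi_\Gamma(\gamma O)$ and the base frame $\pi_{\gamma^{-1}\Gamma\gamma}(\mathcal{O}_O)$ to $\pi_\Gamma(\gamma_*\mathcal{O}_O)$, so $\Phi(\gamma^{-1}\Gamma\gamma)=(M,\pi_\Gamma(\gamma_*\mathcal{O}_O))=L_M(q(\gamma))$. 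Thus $L_M\circ q=\Phi\circ c$, where $c\colon\text{PSL}_2\C\to\mathcal{D}$ is the conjugation map $c(\gamma)=\gamma^{-1}\Gamma\gamma$ (this does land in $\mathcal{D}$ since conjugates of discrete torsion free groups are discrete and torsion free, and one checks $c$ is constant on the fibers $\Gamma\gamma$ of $q$, as it must be). Since $\Phi$ is a homeomorphism, the proof reduces to showing that $c$ is continuous.

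Finally, since $\text{PSL}_2\C$ and $\mathcal{D}$ (with the geometric topology) are metrizable, it is enough to check sequential continuity of $c$ via the criterion of Fact \ref{ChabProps}. Let $\gamma_n\to\gamma$ in $\text{PSL}_2\C$, so also $\gamma_n^{-1}\to\gamma^{-1}$; I claim $c(\gamma_n)=\gamma_n^{-1}\Gamma\gamma_n$ converges geometrically to $c(\gamma)=\gamma^{-1}\Gamma\gamma$. For condition (1), if $\psi$ is an accumulation point of a sequence $\psi_n=\gamma_n^{-1}\delta_n\gamma_n$ with $\delta_n\in\Gamma$, then along a subsequence $\delta_n=\gamma_n\psi_n\gamma_n^{-1}\to\gamma\psi\gamma^{-1}$, and discreteness of $\Gamma$ forces $\delta_n$ to be eventually equal to a fixed $\delta\in\Gamma$ with $\gamma\psi\gamma^{-1}=\delta$, so $\psi=\gamma^{-1}\delta\gamma\in c(\gamma)$. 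For condition (2), any $\psi=\gamma^{-1}\delta\gamma\in c(\gamma)$ with $\delta\in\Gamma$ is the limit of $\psi_n=\gamma_n^{-1}\delta\gamma_n\in c(\gamma_n)$. Hence $c(\gamma_n)\to c(\gamma)$, so $c$ is continuous, and therefore so is $L_M$.

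The step I expect to require the most care is the second one: correctly bookkeeping the conjugation, base point and base frame to confirm $\Phi^{-1}(M,\pi_\Gamma(\gamma_*\mathcal{O}_O))=\gamma^{-1}\Gamma\gamma$, and that this is independent of the chosen lift of the base point. A more hands-on argument directly from Definition \ref{eRclosedef2} is possible in principle but less clean: a nearby frame in $\mathcal{F}M$ alters the marking by an isometry of $\mathbb{H}^3$ that typically does not fix $O$, and modifying it to fix $O$ while preserving the requirement that it descend to an embedding of $B_M(p,R)$ is awkward, so the group-theoretic reduction above is preferable.
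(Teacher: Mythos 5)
Your proof is correct and follows essentially the same route as the paper's: both use the homeomorphism $\Phi$ and the simply transitive action of $\text{PSL}_2\C$ on $\mathcal{F}\mathbb{H}^3$ to reduce continuity of $L_M$ to continuity of the conjugation map $\gamma\mapsto\gamma^{-1}\Gamma\gamma$ into $\mathcal{D}$. The only difference is organizational (you factor through the quotient map $q$ rather than lifting a convergent sequence of frames) and that you explicitly verify the two Chabauty conditions for conjugation, which the paper asserts without detail.
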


\begin{proof}
    It will suffice to show that the map $\Phi^{-1}\circ L_M:\mathcal{F}M\rightarrow\mathcal{D}$ is continuous, where $\Phi:\mathcal{D}\rightarrow\mathcal{H}$ is the homeomorphism from Section \ref{KGroupBackground}. Fix a convergent sequence $f_n\rightarrow f$ in $\mathcal{F}M$ and set $\Gamma=\Phi^{-1}((M,f))$. Let $\{\tilde{f}_n\}\subseteq\mathcal{F}\mathbb{H}^3$ be a $\Gamma$-lift of the sequence $\{f_n\}$ so that $\tilde{f}_n\rightarrow\mathcal{O}_O$ as $n\rightarrow\infty$. Recalling that $\mathrm{PSL}_2\C$ acts simply transitively on $\mathcal{F}\mathbb{H}^3$, set $\psi_n\in\mathrm{PSL}_2\C$ so that $\psi_n(\tilde{f}_n)=\mathcal{O}_O$. Then, we have $\psi_n\rightarrow\mathds{1}$ as $n\rightarrow\infty$, so \[(\Phi^{-1}\circ L_M)(f_n)=\psi_n\Gamma\psi_n^{-1}\rightarrow \Gamma = (\Phi^{-1}\circ L_M)(f)\] as $n\rightarrow\infty$.
\end{proof}

Each frame bundle $\mathcal{F}M$ is path connected, so we have the following immediate corollary. 

\begin{cor}
\label{leaf-pathcomp}
    For any hyperbolic $3$-manifold $M$, $\ell(M)$ is path connected in $\mathcal{H}$.
\end{cor}

\begin{lemma}
\label{pathcomp}
    All framed hyperbolic $3$-manifolds $(\mathbb{H}^3/\Gamma,f)\in\mathcal{H}$ such that $\Omega(\Gamma)\neq\varnothing$ are contained in the same path component of $\mathcal{H}$.
\end{lemma}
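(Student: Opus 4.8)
The goal is to connect an arbitrary framed manifold $(\mathbb{H}^3/\Gamma, f)$ with $\Omega(\Gamma) \neq \varnothing$ to the base framed manifold $(\mathbb{H}^3, \mathcal{O})$ via a path in $\mathcal{H}$. Since $(\mathbb{H}^3, \mathcal{O})$ is a single point joined to all of $\ell(\mathbb{H}^3)$ (just one leaf), and since by Corollary \ref{leaf-pathcomp} each leaf $\ell(M)$ is itself path connected, it suffices to exhibit, for each such $(\mathbb{H}^3/\Gamma, f)$, a path in $\mathcal{H}$ from some framing on $\mathbb{H}^3/\Gamma$ to some framing on $\mathbb{H}^3$ — equivalently, a path in $\mathcal{D}$ from a conjugate of $\Gamma$ to the trivial group $\{\mathds{1}\}$.

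The idea is to use the hypothesis $\Omega(\Gamma) \neq \varnothing$ to push a basepoint out to infinity through the domain of discontinuity, so that the group "escapes to infinity" and the geometric limit becomes trivial. Concretely: pick a point $\xi \in \Omega(\Gamma)$, and move the frame basepoint $O \in \mathbb{H}^3$ along a geodesic ray $\gamma(t)$ in $\mathbb{H}^3$ whose forward endpoint is $\xi$. Let $\psi_t \in \text{PSL}_2\C$ be the unique isometry taking the frame $\gamma'(t)$-based orthonormal frame at $\gamma(t)$ back to $\mathcal{O}_O$; then $\Gamma_t := \psi_t \Gamma \psi_t^{-1}$ defines a path in $\mathcal{D}$ by Proposition \ref{pathprop} (it is the composition of $L_{\mathbb{H}^3/\Gamma}$ with a path of frames on $\mathbb{H}^3/\Gamma$, precomposed appropriately — continuity in the geometric topology is exactly Proposition \ref{pathprop}). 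The key geometric input is that because $\xi \in \Omega(\Gamma)$, there is a horoball $B$ based at $\xi$ that embeds in $\mathbb{H}^3/\Gamma$ (a fixed-size embedded horoball neighborhood of the corresponding point of $\Omega(\Gamma)/\Gamma$), so for large $t$ the ball $B_{\mathbb{H}^3/\Gamma}(\gamma(t), R)$ embeds for any $R$, with injectivity radius at $\gamma(t)$ tending to $\infty$. By Fact \ref{ChabProps} (or directly from the $(\varepsilon, R)$-close definition together with Lemma \ref{lemma-eRTransitivity}), this forces $\Gamma_t \to \{\mathds{1}\}$ as $t \to \infty$, i.e. $(\mathbb{H}^3/\Gamma, f_t) \to (\mathbb{H}^3, \mathcal{O})$. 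Reparametrizing $[0,\infty]$ to $[0,1]$ and appending the path within $\ell(\mathbb{H}^3/\Gamma)$ from $f$ to $f_0$, we obtain the desired path.

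The main obstacle — and the step requiring the most care — is verifying that the path $t \mapsto \Gamma_t$ genuinely extends continuously to $t = \infty$ with limit the trivial group, i.e. that the escaping basepoint really does see arbitrarily large embedded balls. This is where $\Omega(\Gamma) \neq \varnothing$ is essential: one needs that a geodesic ray limiting to $\xi \in \Omega(\Gamma)$ eventually enters and stays in an embedded neighborhood of the boundary at infinity, so that $\text{inj}_{\mathbb{H}^3/\Gamma}(\gamma(t)) \to \infty$. A clean way to see this: the quotient $(\mathbb{H}^3 \cup \Omega(\Gamma))/\Gamma$ is a manifold-with-boundary near the image of $\xi$, so a collar neighborhood pulls back to a $\Gamma$-precisely-invariant (in fact $\{\mathds{1}\}$-precisely-invariant) region containing a neighborhood of $\xi$ in $\mathbb{H}^3 \cup S^2_\infty$; any geodesic ray to $\xi$ is eventually trapped there, and the distance from $\gamma(t)$ to the complement of this region goes to infinity. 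Once that geometric fact is in hand, the convergence $\Gamma_t \to \{\mathds{1}\}$ and hence the conclusion follow formally from the definitions. A final small point is to handle the elementary/degenerate edge cases uniformly (e.g. $\Gamma$ trivial, in which case there is nothing to prove, or $\Gamma$ elementary, which still has $\Omega(\Gamma)\neq\varnothing$ and the same argument applies verbatim).
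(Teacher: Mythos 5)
Your proposal is correct and follows essentially the same route as the paper: both arguments pick a point of $\Omega(\Gamma)$, produce a $\{\mathds{1}\}$-precisely-invariant region accumulating there (the paper uses the convex hull of a precisely invariant round disk in $\Omega(\Gamma)$, you use a precisely invariant horoball/collar of the conformal boundary), and push the frame along a geodesic ray toward that point so the injectivity radius blows up and the limit is $(\mathbb{H}^3,\mathcal{O}_O)$. The only difference is cosmetic packaging of the same geometric input.
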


\begin{proof}
    Let $M=\mathbb{H}^3/\Gamma$, where $\Omega(\Gamma)\neq\varnothing$. We will show that $(M,f)$ is in the same path component of $\mathcal{H}$ as $(\mathbb{H}^3,\mathcal{O}_O)$. Pick $x\in\Omega(\Gamma)$. Since $\Omega(\Gamma)$ is open and $\Gamma$ acts properly discontinuously on $\Omega(\Gamma)$, we may pick an open circular disk $D\subseteq\Omega(\Gamma)$ containing $x$ such that $D$, and hence the convex hull $\CH(D)\subseteq\mathbb{H}^3$, is precisely invariant for $\{\mathds{1}\}$ in $\Gamma$. \par 

    Let $\gamma:[0,\infty)\rightarrow\mathbb{H}^3$ be a geodesic ray such that $\gamma(0)=O$ and $\gamma(t)$ limits to $x$. Since $\CH(D)$ isometrically embeds into $M$, we see that for any $R>0$, $B_{\mathbb{H}^3}(\gamma(t),R)$ isometrically embeds into $M$ for $t$ sufficiently large. Thus, letting $f^t\in\mathcal{F}M$ be the $\Gamma$-projection of the time $t$ parallel transport of $\mathcal{O}_O$ along $\gamma$, Lemma \ref{lemma-eRTransitivity} implies that $(M,f^t)$ geometrically converges to $(\mathbb{H}^3,\mathcal{O}_O)$ as $t\rightarrow\infty$.
\end{proof}

Lemma \ref{pathcomp} provides a large path component of $\mathcal{H}_\infty$. The condition that $\Omega(\Gamma)\neq\varnothing$ occurs for a number of large families of Kleinian groups. In particular, this condition holds if $\Gamma$ is geometrically finite and has infinite covolume. \par

\begin{lemma}
\label{limofgeomfin}
    Any $(M,f)\in\mathcal{H}_\infty$ is the geometric limit of a sequence of geometrically finite elements of $\mathcal{H}_\infty$.
\end{lemma}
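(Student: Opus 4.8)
The plan is to combine the Density Theorem (Namazi--Souto, Ohshika) with Lemma \ref{fglim} and then fix up the volume. Write $M = \mathbb{H}^3/\Gamma$ with $\mathrm{vol}(M) = \infty$, and let $\Gamma = \Phi^{-1}((M,f))$ so that under $\Phi$ the point $(M,f)$ corresponds to $\Gamma$. First I would apply Lemma \ref{fglim} to get an exhausting sequence of finitely generated subgroups $\Gamma_n = \langle \psi_1,\dots,\psi_n\rangle \leq \Gamma$ with $\Gamma_n \to \Gamma$ geometrically. By the strong form of the Density Theorem (the \emph{Density} statement quoted in the excerpt), each $\Gamma_n$, being finitely generated, is itself a geometric limit of geometrically finite Kleinian groups; since $\mathcal{D}$ is metrizable, a diagonal argument produces a single sequence $\{\Delta_k\}$ of geometrically finite Kleinian groups with $\Delta_k \to \Gamma$ geometrically. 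Transporting back through the homeomorphism $\Phi$, this gives a sequence of geometrically finite framed hyperbolic $3$-manifolds converging geometrically to $(M,f)$.

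The one point that requires care — and what I expect to be the main obstacle — is arranging that the approximating geometrically finite manifolds actually lie in $\mathcal{H}_\infty$, i.e.\ have infinite volume, rather than being closed (finite-volume) hyperbolic manifolds. A geometrically finite Kleinian group has finite covolume if and only if its limit set is all of $S^2_\infty$, equivalently $\Omega(\Delta_k) = \varnothing$; so I need to rule out the approximants being lattices. Here I would use the fact that $M$ itself has infinite volume: by a standard ``no collapsing of thick parts'' argument in the geometric topology, geometric convergence $\Delta_k \to \Gamma$ forces, for any $R$, an almost-isometric embedding of an $R$-ball of $M=\mathbb{H}^3/\Gamma$ into $\mathbb{H}^3/\Delta_k$ (Definition \ref{eRclosedef2}), and since $\mathrm{vol}(M) = \infty$ these $R$-balls have volume tending to $\infty$ with $R$; hence $\mathrm{vol}(\mathbb{H}^3/\Delta_k) \to \infty$, and in particular $\mathrm{vol}(\mathbb{H}^3/\Delta_k) = \infty$ for all large $k$. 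Discarding the finitely many finite-volume terms gives the desired sequence inside $\mathcal{H}_\infty$.

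Alternatively, and perhaps more cleanly, I can avoid the volume estimate by being more careful about \emph{which} geometrically finite approximants the Density Theorem produces: each finitely generated $\Gamma_n \leq \Gamma$ is infinite-index in $\Gamma$ (since $\Gamma$ is not finitely generated — if $\Gamma$ were finitely generated one handles $M$ directly, as a finitely generated infinite-volume group is itself not a lattice and the Density Theorem gives infinite-volume geometrically finite approximants because one can take the approximating geometrically finite groups to be quasiconformal deformations within the same topological type, which preserves the infinite-volume/nonempty-domain-of-discontinuity condition). So in every case the approximating geometrically finite groups may be taken with $\Omega \neq \varnothing$, hence infinite covolume. I would present the write-up using whichever of these is shortest, most likely the volume-continuity argument of the previous paragraph, since it needs no bookkeeping about the internal structure of the Density Theorem's output; the key external inputs are the \emph{Density} statement, Lemma \ref{fglim}, metrizability of the geometric topology, and the elementary fact that geometric convergence implies convergence of volumes on compact sets.
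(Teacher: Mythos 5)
Your overall skeleton (Lemma \ref{fglim} plus the density theorem plus a diagonal argument using metrizability) matches the paper's. But the argument you say you would actually write up --- the ``volume-continuity'' paragraph --- has a genuine gap at its last step. From the almost-isometric embeddings of $R$-balls you correctly get $\mathrm{vol}(\mathbb{H}^3/\Delta_k)\rightarrow\infty$, but you then conclude ``in particular $\mathrm{vol}(\mathbb{H}^3/\Delta_k)=\infty$ for all large $k$,'' which is a non sequitur: a sequence of \emph{finite} volumes can tend to infinity. This is not a removable technicality, because finite-volume Kleinian groups genuinely can converge geometrically to infinite-covolume ones (for instance, a sequence of closed hyperbolic $3$-manifolds Benjamini--Schramm converging to $\mathbb{H}^3$ gives framed finite-volume manifolds converging to $(\mathbb{H}^3,\mathcal{O}_O)\in\mathcal{H}_\infty$). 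So no estimate of the form ``volumes are large'' can rule out finite-volume approximants; you must control which approximants the density theorem hands you.

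That is exactly what the paper does, and what your alternative paragraph gropes toward but does not cleanly state. The relevant facts are: (i) any subgroup of an infinite-covolume Kleinian group has infinite covolume (the covering $\mathbb{H}^3/\Gamma_n\rightarrow\mathbb{H}^3/\Gamma$ has degree $[\Gamma:\Gamma_n]\geq 1$, so $\mathrm{vol}(\mathbb{H}^3/\Gamma_n)\geq\mathrm{vol}(\mathbb{H}^3/\Gamma)=\infty$ --- your detour through ``infinite index'' and through whether $\Gamma$ is finitely generated is unnecessary); and (ii) the strong form of the density theorem cited in the paper \cite[Corollary 12.3]{NAM} already asserts that every finitely generated element of $\mathcal{D}_\infty$ is a geometric limit of geometrically finite elements \emph{of} $\mathcal{D}_\infty$, because the approximants are images of discrete faithful representations of $\Gamma_n$ inducing the right topological type, hence never lattices. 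Also note that your equivalence ``infinite covolume iff $\Omega\neq\varnothing$'' is only valid for geometrically finite groups (degenerate groups are infinite covolume with $\Omega=\varnothing$), so it is safe where you use it but should not be leaned on for the $\Gamma_n$ themselves. With (i) and (ii) in hand the proof is two lines, which is the paper's proof.
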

\begin{proof}
    Namazi and Souto \cite[Corollary 12.3]{NAM} proved a strong version of the density theorem that tells us that all finitely generated elements of \[\mathcal{D}_\infty=\{\Gamma\in\mathcal{D}\;|\;\mathrm{vol}(\mathbb{H}^3/\Gamma)=\infty\}=\Phi^{-1}(\mathcal{H}_\infty)\] are geometric limits of geometrically finite elements of $\mathcal{D}_\infty$. Now, the result follows from Lemma \ref{fglim}.
\end{proof}

\begin{theorem}
\label{Thm-connectedcomponents}
    The connected components of $\mathcal{H}$ are:
    \begin{enumerate}
        \item $\ell(M)$ for each hyperbolic $3$-manifold $M$ with $\mathrm{vol}(M)<\infty$, and
        \item $\mathcal{H}_\infty$.
    \end{enumerate}
\end{theorem}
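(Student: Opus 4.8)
The plan is to treat the finite-volume leaves and the infinite-volume set separately, since the two cases rely on different inputs: Mostow–Prasad rigidity for the former, and the density theorem for the latter.

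For part (1), let $M$ be a hyperbolic $3$-manifold with $\text{vol}(M)<\infty$. By Corollary \ref{leaf-pathcomp}, $\ell(M)$ is path connected, hence connected, so it suffices to show $\ell(M)$ is both open and closed in $\mathcal{H}$. For closedness I would take a sequence $(M,f_n)\to (N,h)$ in $\mathcal{H}$ with $(M,f_n)\in\ell(M)$; by standard results on the geometric topology (\cite[Section E]{BEN}), a geometric limit of copies of a fixed finite-volume manifold is again a finite-volume manifold which is covered by $M$ in a way compatible with the volumes — and by Mostow–Prasad rigidity together with the fact that finite-volume hyperbolic $3$-manifolds have no proper finite-sheeted covers of the same volume, $N$ is isometric to $M$, so $(N,h)\in\ell(M)$. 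For openness, one uses that if $(N,h)$ is $(\varepsilon,R)$-close to some $(M,f)$ for $R$ large and $\varepsilon$ small, then $N$ contains a bilipschitz copy of almost all of the compact manifold $M$; pushing $R\to\infty$, $\varepsilon\to 0$ forces (again by rigidity and a volume/injectivity-radius argument) that $N$ is isometric to $M$, so a whole neighborhood of $(M,f)$ lies in $\ell(M)$. Thus each such $\ell(M)$ is clopen and connected, hence a connected component, and moreover $\mathcal{H}_\infty$, being the complement of the union of all these clopen sets, is closed; it is also open since $\text{vol}$ is lower semicontinuous in the geometric topology (a geometric limit of infinite-volume manifolds has infinite volume), so $\mathcal{H}_\infty$ is clopen as well.

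For part (2), I must show $\mathcal{H}_\infty$ is connected. By Lemma \ref{pathcomp}, every framed manifold $(\mathbb{H}^3/\Gamma,f)$ with $\Omega(\Gamma)\neq\varnothing$ lies in a single path component $\mathcal{C}$ of $\mathcal{H}$, and in particular $\mathcal{C}\cap\mathcal{H}_\infty$ contains $(M,f)$ for every geometrically finite infinite-covolume $\Gamma$ (such $\Gamma$ have nonempty domain of discontinuity, as noted after Lemma \ref{pathcomp}) and every framing $f$. By Lemma \ref{limofgeomfin}, every point of $\mathcal{H}_\infty$ is a geometric limit of such geometrically finite framed manifolds, so $\mathcal{H}_\infty\subseteq\overline{\mathcal{C}\cap\mathcal{H}_\infty}$. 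Since $\mathcal{H}_\infty$ is closed in $\mathcal{H}$ (from part (1)), the closure of the connected set $\mathcal{C}\cap\mathcal{H}_\infty$ is connected and contained in $\mathcal{H}_\infty$; as it contains all of $\mathcal{H}_\infty$, it equals $\mathcal{H}_\infty$, which is therefore connected. Combined with part (1), which shows the $\ell(M)$ for $\text{vol}(M)<\infty$ are connected components and exhaust $\mathcal{H}\setminus\mathcal{H}_\infty$, this shows the list is exactly the set of connected components.

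The main obstacle I anticipate is part (1): making rigorous the claim that $\ell(M)$ is clopen for finite-volume $M$. The subtlety is that a geometric limit of isometric copies $(M,f_n)$ need not a priori "be" $M$ — it is some hyperbolic manifold $N$ receiving bilipschitz-nearly-isometric embeddings of larger and larger balls of $M$; one needs a thick-thin/injectivity-radius argument (or the standard Chabauty-space compactness results together with Mostow rigidity) to upgrade these near-isometries to an actual isometry $N\cong M$ and to rule out $N$ being a nontrivial quotient or a strictly larger-volume cover. I would lean on \cite[Section E]{BEN} for the precise statement that, in the geometric topology, a sequence of lattices in $\text{PSL}_2\C$ converging geometrically converges to a lattice, together with the no-same-volume-covers fact, rather than reprove it from scratch.
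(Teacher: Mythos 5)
Your proposal follows essentially the same route as the paper: the finite-volume leaves are handled by the classical results on geometric convergence of finite-volume manifolds from \cite[Section E]{BEN} together with rigidity (the paper organizes this as an induction on the number of cusps rather than a clopen argument, but the input is the same), and the connectivity of $\mathcal{H}_\infty$ is obtained exactly as in the paper by taking the closure of the path-connected set of geometrically finite infinite-volume framed manifolds furnished by Lemma \ref{pathcomp} and Lemma \ref{limofgeomfin}. One caveat: your paraphrase of the closedness step (``$N$ is covered by $M$,'' ``no proper finite-sheeted covers of the same volume'') is not the right mechanism --- the correct classical statement is that for large $i$ the approximants are Dehn fillings of the limit, with volume strictly decreasing unless the filling is trivial --- but since you explicitly defer to the precise statement in \cite[Section E]{BEN}, as the paper itself does, this does not change the architecture of the argument.
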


\begin{proof}
    Classical results on sequences in the geometric topology (see eg. \cite[Theorem E.2.4]{BEN}) tell us that if $\{(M_i,f^i)\}\subseteq\mathcal{H}$ is any sequence converging to a finite volume $(M,f)\in\mathcal{H}$, then for $i$ sufficiently large, either $M_i$ is isometric to $M$ (in which case $(M_i,f^i)\in\ell(M)$) or $M_i$ has finite volume and strictly fewer cusps than $M$. Hence, one may proceed by induction on $n$ to show that for any finite volume hyperbolic $3$-manifold $M$ with $n$ cusps, $\ell(M)$ is a connected component of $\mathcal{H}$.\par
    
    To show that $\mathcal{H}_\infty$ is its own connected component, it will now suffice to show that $\mathcal{H}_\infty$ is connected: indeed, we can write $\mathcal{H}$ as the disjoint union \[\mathcal{H}=\mathcal{H}_\infty\cup\left(\bigcup_{\mathrm{vol}(M)<\infty}\ell(M)\right),\] where we already know that the leaves in the union are their own connected components. Note that Proposition \ref{pathcomp} tells us all $(M,f)\in\mathcal{H}_\infty$ with $M$ geometrically finite are contained in a single path component, and Lemma \ref{limofgeomfin} then tells us that this path component is dense in $\mathcal{H}_\infty$. Since the closure of a connected set is connected, we conclude that $\mathcal{H}_\infty$ is connected, as desired.
\end{proof}

\subsection{Dense Leaf}
\label{subsec-denseleaf}

In this subsection, we will construct a hyperbolic $3$-manifold $M$ such that the leaf $\ell(M)$ is a dense subset of $\mathcal{H}_\infty$. In other words, any infinite volume framed hyperbolic $3$-manifold may be approximated by appropriate framings on $M$. By Corollary \ref{leaf-pathcomp}, $\ell(M)$ is consequently another dense path connected subset of $\mathcal{H}_\infty$ (cf. Lemma \ref{pathcomp}), providing a second proof that $\mathcal{H}_\infty$ is connected. The construction of $M$ will feature a modification of a construction by Fuchs, Purcell and Stewart in \cite{PUR}, which utilizes the circle packing machinery developed by Brooks \cite{BRO}.\par

\begin{defn}
\label{def-circlepack}
    Let $\Gamma\in\mathcal{D}$ be convex cocompact with infinite covolume. A \textbf{circle packing} $P$ on $\partial_{\infty}(\mathbb{H}^3/\Gamma)=\Omega(\Gamma)/\Gamma$ is a $\Gamma$-invariant collection of (projective) circles on $\Omega(\Gamma)$ together with a triangulation $V$ of $\Omega(\Gamma)$ with $P=\{c_v\;|\;v\text{ a vertex of }V\}$ satisfying the following:
    \begin{enumerate}
        \item The circles of $P$ bound disks with disjoint interiors. 
        \item Each circle $c_v$ is centered at the vertex $v$. 
        \item $c_v,c_u\in P$ are tangent if and only if $\langle v,u\rangle$ is an edge of $V$.
        \item Arcs of mutually tangent circles $c_v,c_u,c_w\in P$ form a curvilinear triangle in $\Omega(\Gamma)$ if and only if $\langle u,v,w\rangle$ forms a positively oriented face of $V$.
        \item Any compact subset of $\Omega(\Gamma)$ intersects finitely many circles in $P$.
    \end{enumerate}
    A framed convex cocompact hyperbolic $3$-manifold $(M,f)$ \textbf{admits a circle packing} if a circle packing can be constructed on $\partial_\infty(M)$ with respect to the Kleinian group $\Phi^{-1}((M,f))$.
\end{defn}

\begin{figure}
    \begin{center}
        \includegraphics[scale=0.30]{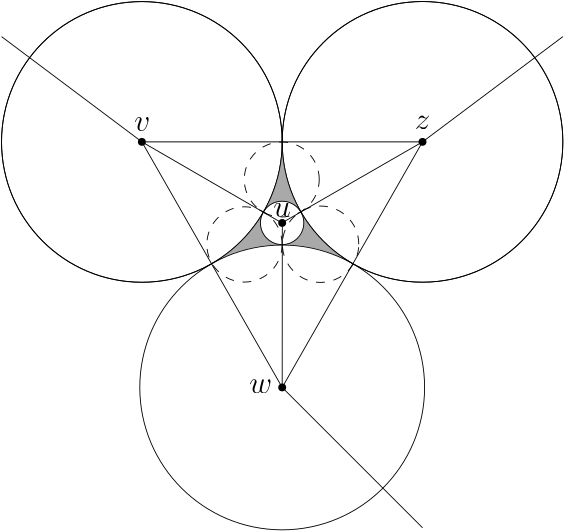}
    \end{center}
    \captionsetup{width=.8\linewidth}
    \caption{This depicts a portion of a circle packing $P$ with its associated triangulation. The shaded regions are curvilinear triangles in $\partial_{\infty}(\mathbb{H}^3/\Gamma)$. The dotted circles are the dual circles in $P^*$, including $c^{(v,u,w)}$.}
    \label{CirclePack}
\end{figure}

To elucidate the fourth condition in Definition \ref{def-circlepack}, Figure \ref{CirclePack} depicts a portion of a circle packing, in which curvilinear triangles (such as that bounded by the circles $c_v,c_u$, and $c_w$) are shown shaded. Note that the face $\langle v,z,w\rangle$ in the associated triangulation is not positively oriented, hence these circles do not bound a curvilinear triangle.

Fix a convex cocompact hyperbolic $3$-manifold $M=\mathbb{H}^3/\Gamma$ and a circle packing $P$ of $\partial_\infty(M)$ with respect to $\Gamma$. For circles $c_v,c_u,c_w$ in $P$ forming a curvilinear triangle in $\Omega(\Gamma)$, there is a unique \textbf{dual} circle $c^{(v,u,w)}$ orthogonal to each of $c_v,c_u$, and $c_w$ and intersecting them at the points of tangency. The dual circles, including $c^{(v,u,w)}$, are depicted dotted in Figure \ref{CirclePack}. The collection of all such dual circles corresponding to curvilinear triangles will be denoted $P^*$ and called the dual circle packing of $P$. Note that $P^*$ may not be an actual circle packing in the sense defined above: for example, the dual graph to a triangulation is not necessarily a triangulation.\par

For each circle $c$ in $P$ or $P^*$, let $H(c)\subseteq\mathbb{H}^3$ be the open hyperbolic half space meeting $S^2_\infty$ at the interior of $c$. We then define the \textbf{scooped manifold} \[M_P=\left(\mathbb{H}^3\;\backslash\bigcup_{c\in P,P^*}H(c)\right)\mathlarger{\mathlarger{\mathlarger{/}}}\Gamma.\]\par

The following lemma, proved by Fuchs, Purcell and Stewart in \cite[Prop 3.3]{PUR}, will tell us that that the boundary of a scooped manifold $M_P$ consists of hyperbolic ideal polyhedra. The faces of the polyhedra descend from the boundaries of $H(c)\subseteq\mathbb{H}^3$ for $c\in P\cup P^*$ and their edges descend from intersections of $H(c)$ and $H(c^*)$ for $c\in P$ and $c^*\in P^*$. Faces descending from boundaries of $H(c)$ for $c\in P$ will be colored white and faces coming from boundaries of $H(c^*)$ for $c^*\in P^*$ will be colored black. \par

\begin{lemma}
\label{scoopbound}
    Let $(M,f)\in\mathcal{H}_\infty$ be convex cocompact such that $\partial_\infty M$ admits a circle packing. Then, the scooped manifold $M_P$ has the following properties:
    \begin{enumerate}
        \item With the coloring above, no two faces of $\partial M_P$ of the same color share an edge. 
        \item The faces consist of totally geodesic ideal polygons, where the black faces are ideal triangles.
        \item The dihedral angle between faces is $\pi/2$.
    \end{enumerate}
\end{lemma}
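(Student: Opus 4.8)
\textbf{Proof proposal for Lemma \ref{scoopbound}.}

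The plan is to lift everything to $\mathbb{H}^3$ and argue with a single fundamental piece of the circle packing, since all the assertions are local and $\Gamma$-equivariant. Fix a curvilinear triangle $T$ in $\Omega(\Gamma)$ bounded by mutually tangent circles $c_v, c_u, c_w \in P$ with dual circle $c^* = c^{(v,u,w)} \in P^*$. The region of $\mathbb{H}^3 \setminus \bigcup_{c \in P \cup P^*} H(c)$ lying ``under'' $T$ is bounded by the four totally geodesic planes $\partial H(c_v), \partial H(c_u), \partial H(c_w), \partial H(c^*)$. I would first record the elementary hyperbolic geometry fact underlying everything: if two circles on $S^2_\infty$ meet at a single point (are tangent), then the corresponding hyperplanes $\partial H(c), \partial H(c')$ in $\mathbb{H}^3$ are disjoint but share a common ideal endpoint, so they meet ``at angle $0$'' at infinity; if two circles meet orthogonally on $S^2_\infty$, then the corresponding hyperplanes meet in $\mathbb{H}^3$ along a geodesic at dihedral angle $\pi/2$. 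This is the standard dictionary between the angle of intersection of circles on $\partial \mathbb{H}^3$ and the dihedral angle of the bounding half-spaces, and it immediately gives item (3): the only pairs of bounding planes that actually intersect in $\mathbb{H}^3$ are $\partial H(c_v) \cap \partial H(c^*)$, $\partial H(c_u) \cap \partial H(c^*)$, $\partial H(c_w) \cap \partial H(c^*)$, since $c_v,c_u,c_w$ are pairwise tangent (disjoint interiors) while $c^*$ meets each of them orthogonally by definition of the dual circle; each such intersection is a geodesic meeting the two planes at angle $\pi/2$.

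Next I would extract the combinatorial statement (1). A white face comes from some $\partial H(c)$, $c \in P$, and a black face from some $\partial H(c^*)$, $c^* \in P^*$. Two faces of $\partial M_P$ share an edge exactly when the corresponding half-spaces' boundary planes intersect along a geodesic that survives in $\partial M_P$. By the dictionary above, two white faces would need $c, c' \in P$ with $\partial H(c) \cap \partial H(c') \neq \varnothing$, i.e. $c, c'$ overlapping; but distinct circles of $P$ bound disks with disjoint interiors (condition (1) of Definition \ref{def-circlepack}), so their boundary planes are disjoint in $\mathbb{H}^3$ — no shared edge. For two black faces, I need $c^*, c^{**} \in P^*$ with overlapping interiors; here one uses that distinct dual circles, coming from distinct (adjacent) curvilinear triangles, also bound disks with disjoint interiors — this follows because a dual circle $c^{(v,u,w)}$ is inscribed in the curvilinear triangle cut out by $c_v, c_u, c_w$ (it passes through the three tangency points and is orthogonal to the three circles, hence lies inside that triangle), and distinct curvilinear triangles have disjoint interiors by conditions (1) and (4). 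So no two black faces share an edge either, giving (1). (I should double-check that a dual circle cannot be tangent to a circle of $P$ that is not one of its three defining circles in a way that creates a degenerate edge; this again reduces to the curvilinear-triangle region being embedded.)

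Finally, item (2): each face is totally geodesic because it lies in $\partial H(c)$, which is a totally geodesic hyperplane of $\mathbb{H}^3$. A face is an ideal polygon because its edges are complete geodesics (intersections of two such hyperplanes) and its vertices at infinity are the ideal points where consecutive bounding circles are tangent — there are no finite vertices since all the vertices lie on $S^2_\infty$. For a black face sitting in $\partial H(c^*)$: the disk bounded by $c^*$ is cut by exactly the three circles $c_v, c_u, c_w$ orthogonal to it (any other circle of $P \cup P^*$ is disjoint from the interior of $c^*$ as argued above), so the face is bounded by three geodesics and has three ideal vertices at the three tangency points — an ideal triangle. White faces are polygons with one ideal vertex for each tangency along that circle, hence arbitrary (finite) ideal polygons, which is all that is claimed. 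I would remark that properness (condition (5) of the circle packing) ensures each face has finitely many edges, so these really are honest ideal polygons rather than infinite-sided regions.

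The main obstacle I anticipate is the careful verification in step two that the dual circles bound disjoint disks and that a dual circle is not unexpectedly tangent to or crossed by a non-adjacent circle — in other words, showing that the closed region of $\mathbb{H}^3 \setminus \bigcup H(c)$ over a single curvilinear triangle is a genuine embedded ideal tetrahedron-like piece whose only nontrivial dihedral angles are the three right angles along the black face. Everything else is the routine circle/hyperplane angle dictionary. Since this lemma is quoted from \cite[Prop.~3.3]{PUR}, I would in fact just cite it and sketch only this much, trusting the reader to consult the original for the combinatorial bookkeeping.
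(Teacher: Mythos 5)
The paper does not prove this lemma at all: it is quoted directly from Fuchs--Purcell--Stewart \cite[Prop.~3.3]{PUR}, so your closing decision to cite and only sketch is exactly what the paper does. Your sketch is the standard circle/half-space dictionary argument and is essentially sound, but one geometric claim in it is wrong: the dual circle $c^{(v,u,w)}$ does \emph{not} lie inside the curvilinear triangle. It passes through the three tangency points and is orthogonal to $c_v,c_u,c_w$ there, so its disk is the union of the closed interstice with three lune-shaped segments that bulge \emph{into} the disks bounded by $c_v,c_u,c_w$; it circumscribes the interstice rather than being inscribed in it. The correct reason distinct dual circles bound disks with disjoint interiors (hence that no two black faces share an edge) is that two dual circles meeting at a common tangency point of, say, $c_v$ and $c_u$ are both orthogonal to the common tangent direction of $c_v$ and $c_u$ at that point, hence are tangent to each other there and sit on opposite sides; away from tangency points the dual disks are separated by the packing disks and the disjoint interstices. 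With that repair (which is precisely the verification you flagged as the main obstacle), the rest of the sketch --- tangent circles giving asymptotic planes, orthogonal circles giving right dihedral angles, and properness of the packing giving finitely many edges per face --- goes through and recovers all three assertions.
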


Using the circle packing machinery developed by Brooks \cite{BRO} combined with the Density Theorem \cite{NAM}, Fuchs, Purcell and Stewart also prove the following result \cite{PUR}, which tells us that we can approximate any geometrically finite framed hyperbolic $3$-manifold by convex cocompact framed manifolds whose boundaries admit a circle packing.

\begin{lemma}
\label{packapprox}
    For any geometrically finite $(M,f)\in\mathcal{H}_\infty$ and $\varepsilon,R>0$, there exists a convex cocompact $(N,h_q)\in\mathcal{H}_\infty$ that is $(\varepsilon, R)$-close to $(M,f)$ and admits a circle packing $P$ with $B_N(q,R)$ contained in the scooped manifold $N_P$.
\end{lemma}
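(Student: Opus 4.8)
The plan is to reduce to the convex cocompact case, then invoke Brooks' circle packing deformation theorem \cite{BRO} to find a nearby convex cocompact manifold whose conformal boundary carries a very fine circle packing, and finally check that a fine enough packing scoops away only shallow half-spaces near the basepoint (this is essentially the argument of Fuchs--Purcell--Stewart \cite{PUR}).

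First I would dispose of the cusps of $M$. Since $(M,f)$ is geometrically finite, $\Gamma=\Phi^{-1}((M,f))$ has finitely many cusps; performing long hyperbolic Dehn surgeries on the rank-two cusps and opening the rank-one cusps (replacing a maximal cyclic parabolic subgroup by a nearly parabolic loxodromic $\mathbb{Z}$ up to conjugacy, modeled on the elementary fact that $\langle z\mapsto z+1\rangle$ is the geometric limit of suitable conjugates of $\langle z\mapsto e^{1/n}z\rangle$) produces convex cocompact infinite-covolume Kleinian groups converging geometrically to $\Gamma$; this is classical. Since the geometric topology is metrizable with the $(\varepsilon,R)$-neighborhoods as a basis, in view of Fact \ref{ChabProps} it suffices to prove the lemma when $(M,f)$ is itself convex cocompact, after shrinking $\varepsilon$ and enlarging $R$.

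So assume $M=\mathbb{H}^3/\Gamma$ is convex cocompact, and conjugate $\Gamma$ so that $\infty\in\Lambda(\Gamma)$ if $\Gamma$ is nontrivial (the trivial and elementary cases are handled directly using an explicit fine circle packing of $S^2_\infty$ or of the relevant quotient surface). The essential input is Brooks' theorem \cite{BRO}, which --- combined with the density theorem \cite{NAM} to make its limiting arguments unconditional --- asserts that the set of $\rho\in GF(\Gamma)$ whose conformal boundary $\Omega(\rho(\Gamma))/\rho(\Gamma)$ admits a circle packing is dense in $GF(\Gamma)$, and moreover that the combinatorics of the packing can be prescribed. Taking a sufficiently fine combinatorial pattern, Brooks produces $\rho$ arbitrarily close to the identity in $GF(\Gamma)$ together with a circle packing $P$ of $\Omega(\rho(\Gamma))/\rho(\Gamma)$ all of whose circles --- and hence, up to a universal multiplicative constant, all of whose dual circles in $P^*$ --- have arbitrarily small Euclidean diameter. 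Because $\Gamma$ is convex cocompact, $\rho(\Gamma)$ is convex cocompact of infinite covolume and the convergence $\rho\to\mathrm{id}$ in $GF(\Gamma)$ is strong, so for a suitable framing $h_q$ the manifold $N=\mathbb{H}^3/\rho(\Gamma)$ gives a pair $(N,h_q)$ that is $(\varepsilon,R)$-close to $(M,f)$. I expect this step to be the main obstacle: it is where the packing is actually built, and where one must use that Brooks' quasiconformal deformations converge \emph{geometrically}, not merely algebraically.

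Finally I would check $B_N(q,R)\subseteq N_P$ once $P$ is chosen fine enough. Passing to the upper half-space model, let $\tilde q$ be a $\rho(\Gamma)$-lift of the basepoint of $h_q$, of Euclidean height $t>0$; then $B_{\mathbb{H}^3}(\tilde q,R)$ is the Euclidean ball of radius $t\sinh R$ centered vertically above $\tilde q$ at Euclidean height $t\cosh R$, so its lowest point has Euclidean height $te^{-R}>0$. For $c\in P$ the removed half-space $H(c)$ is the open Euclidean half-ball resting on $S^2_\infty$ over the round disk bounded by $c$, of Euclidean radius equal to that of $c$; for $c^*\in P^*$ the disk bounded by $c^*$ lies in the union of three mutually tangent disks of $P$ together with their interstice, hence in $\Omega(\rho(\Gamma))$ and of comparable Euclidean size, and $H(c^*)$ is a Euclidean cap of that size. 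Choosing $P$ fine enough that every circle of $P\cup P^*$ has Euclidean radius less than $te^{-R}$, every half-space $H(c)$ with $c\in P\cup P^*$ then lies strictly below the horizontal plane at height $te^{-R}$, hence is disjoint from $B_{\mathbb{H}^3}(\tilde q,R)$; since $\bigcup_c H(c)$ is $\rho(\Gamma)$-invariant, this yields $B_N(q,R)\subseteq N_P$. As refining $P$ does not alter $N$, the pair $(N,h_q)$ remains $(\varepsilon,R)$-close to $(M,f)$, which completes the argument.
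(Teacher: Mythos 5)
The paper does not actually prove Lemma \ref{packapprox}; it is quoted from Fuchs--Purcell--Stewart \cite{PUR}, so there is no internal argument to compare yours against. Your outline does follow the strategy of \cite{PUR}: reduce to the convex cocompact case by filling rank-two cusps and opening rank-one cusps, invoke Brooks' theorem \cite{BRO} (with the density theorem making the limiting step unconditional) to obtain a strongly convergent quasiconformal deformation whose conformal boundary carries an arbitrarily fine invariant circle packing, and then check that a fine packing only removes half-spaces missing $B_{\mathbb{H}^3}(\tilde q,R)$. The first two steps are essentially right, modulo the routine bookkeeping that $(\varepsilon,R)$-closeness must be chained through the intermediate convex cocompact approximation (in the spirit of Lemma \ref{lemma-eRTransitivity}).

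The last step, however, rests on a claim that is false as written. You ask to choose $P$ so that \emph{every} circle of $P\cup P^*$ has Euclidean radius less than $te^{-R}$. A $\rho(\Gamma)$-invariant, locally finite packing of $\Omega(\rho(\Gamma))$ has only finitely many circles modulo $\rho(\Gamma)$ (since $\Omega(\rho(\Gamma))/\rho(\Gamma)$ is compact), so the orbit of a single circle contains circles of Euclidean radius comparable to $|\gamma'|$ as $\gamma$ ranges over the group; whenever $\Omega(\rho(\Gamma))$ is unbounded in $\mathbb{C}$ --- which your normalization $\infty\in\Lambda(\rho(\Gamma))$ does not prevent (e.g.\ $\Lambda\supseteq\mathbb{R}\cup\{\infty\}$) --- this forces circles of arbitrarily large Euclidean radius, and a large circle whose disk overlies the shadow of $\tilde q$ would have $H(c)$ swallowing the ball. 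The fix is to argue equivariantly: each closed disk bounded by a circle of $P\cup P^*$ lies in $\Omega(\rho(\Gamma))$, so each $H(c)$ is disjoint from $CH(\rho(\Gamma))$ at positive distance, and by invariance together with the finiteness of the circle orbits this distance is bounded below by a constant $D_P$ with $D_P\rightarrow\infty$ as the packing is refined (measure the circles on the compact quotient surface, not by their Euclidean radii in the plane). Choosing $D_P>R+d_{\mathbb{H}^3}(O,CH(\rho(\Gamma)))$ then gives $B_{\mathbb{H}^3}(O,R)\cap H(c)=\varnothing$ for every $c$, and the $\rho(\Gamma)$-invariance of $\bigcup_cH(c)$ yields $B_N(q,R)\subseteq N_P$. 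Alternatively, observe that only the finitely many circles whose disks meet a fixed compact neighborhood in $\Omega(\rho(\Gamma))$ of the shadow of $B_{\mathbb{H}^3}(O,R)$ are relevant, and require only those to be small. With that replacement your argument goes through.
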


We will now utilize Lemmas \ref{scoopbound} and \ref{packapprox} to give a modification of Lemma \ref{packapprox} in which we approximate any framed hyperbolic $3$-manifold by ones whose convex core boundaries are given by a disjoint union of totally geodesic thrice punctured spheres. This construction is a modification of that carried out by Fuchs, Purcell, and Stewart in \cite[Construction 3.10]{PUR}, with  different goals.

\begin{prop}
\label{approxwithsph}
    For any $(M,f)\in\mathcal{H}_\infty$ and $\varepsilon,R>0$, there exists a geometrically finite $(N,h_q)\in \mathcal{H}_\infty$ that is $(\varepsilon,R)$-close to $(M,f)$ such that $\partial\CC(N)$ is a union of totally geodesic thrice punctured spheres, with $B_N(q,R)\subseteq \CC(N)$.
\end{prop}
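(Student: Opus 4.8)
\emph{Proof proposal.} The plan is to reduce to the convex cocompact circle-packed case via Lemmas~\ref{limofgeomfin} and~\ref{packapprox}, run a doubling construction on the resulting scooped manifold (following and modifying \cite[Construction 3.10]{PUR}), and then transfer the approximation along an isometric embedding using Lemma~\ref{lemma-eRTransitivity}. First, it suffices to treat geometrically finite $(M,f)$: a general $(M,f)\in\mathcal{H}_\infty$ is a geometric limit of geometrically finite ones by Lemma~\ref{limofgeomfin}, and given $\varepsilon,R$ one picks a geometrically finite $(M_0,f_0)$ that is $(\varepsilon_0,R_0)$-close to $(M,f)$ with $\varepsilon_0$ small and $R_0$ large, so that precomposing the bilipschitz map of an $(\varepsilon_0,R_0)$-approximation of $(M_0,f_0)$ produces an $(\varepsilon,R)$-approximation of $(M,f)$ (the nested bilipschitz maps compose at a controlled cost to the constants). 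Lemma~\ref{packapprox} then gives a convex cocompact $(N',h'_{q'})\in\mathcal{H}_\infty$, $(\varepsilon,R)$-close to $(M,f)$, admitting a circle packing $P$ with $B_{N'}(q',R)$ contained in the scooped manifold $N'_P$; by Lemma~\ref{scoopbound} the boundary $\partial N'_P$ decomposes into white totally geodesic ideal polygons and black ideal triangles meeting at dihedral angle $\pi/2$, with no two faces of the same color adjacent.

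Next, let $N$ be the double of $N'_P$ along its \emph{white} faces. Since the white faces are totally geodesic and meet the black faces orthogonally, $N$ carries a smooth complete hyperbolic metric; $\partial N$ is built from the black ideal triangles, and along each edge two of them now meet at angle $\pi/2+\pi/2=\pi$, so they fit together into totally geodesic surfaces. I would then read off the relevant circle-packing combinatorics: each black triangle $B_{c^*}$, dual to a curvilinear triangle with circles $c_u,c_v,c_w$, has all three of its edges on the white faces $W_u,W_v,W_w$, and each reflection $r_{c_u}$ fixes the dual circle $c^*$ and hence its supporting plane $P_{c^*}$. Consequently the boundary component of $N$ through $B_{c^*}$ is the quotient of $P_{c^*}\cong\mathbb{H}^2$ by the orientation-preserving part of the ideal-triangle reflection group generated by the reflections in the three edges of $B_{c^*}$, i.e.\ a totally geodesic thrice-punctured sphere. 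As $N'$ is convex cocompact there are only finitely many such components, so $\partial N$ is a finite union of totally geodesic thrice-punctured spheres.

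It remains to check that $N$ is geometrically finite, lies in $\mathcal{H}_\infty$, has $\partial CC(N)=\partial N$, and contains an appropriately framed ball $B_N(q,R)$ in its convex core. I would realize $N=\mathbb{H}^3/\Delta^+$, where $\Delta=\langle\Gamma,\,r_{c_v}:c_v\in P\rangle$ with $\mathbb{H}^3/\Gamma=N'$ and $\Delta^+\leq\Delta$ the orientation-preserving index-two subgroup; unrolling the doubling shows the universal cover of $N$ develops onto the $\langle r_{c_v}\rangle$-orbit of $\mathbb{H}^3\setminus\bigcup_c H(c)$, which is $\mathbb{H}^3$ minus the orbit of the half-spaces over the \emph{dual} circles. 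Hence $\Omega(\Delta^+)$ is a nonempty disjoint union of round disks, so $\partial CH(\Lambda(\Delta^+))$ is a union of the planes $P_{c^*}$ and their translates and $\partial CC(N)=\partial N$; geometric finiteness follows since $\Delta$ has a finite-sided fundamental polyhedron (that of $\Gamma$ truncated by the finitely many orbit representatives of the $H(c_v)$), and $\Omega(\Delta^+)\neq\varnothing$ forces $\text{vol}(N)=\infty$. Finally, $B_{N'}(q',R)\subseteq N'_P$ means a lift $\tilde q$ has $B_{\mathbb{H}^3}(\tilde q,R)\subseteq\mathbb{H}^3\setminus\bigcup_c H(c)\subseteq CH(\Lambda(\Delta^+))$, so the isometric inclusion $\iota$ of one copy of $N'_P$ into $N$ satisfies $B_N(\iota q,R)\subseteq CC(N)$; Lemma~\ref{lemma-eRTransitivity} then gives that $(N,\iota_*h'_{q'})$ is $(\varepsilon,R)$-close to $(M,f)$, and we set $(N,h_q)=(N,\iota_*h'_{q'})$.

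I expect the main obstacle to be the third step: correctly identifying $\Omega(\Delta^+)$ as a union of round disks and thereby pinning down $\partial CC(N)$, together with the verification that $\Delta$ is geometrically finite and that $N$ has infinite volume (equivalently, that the ``corner regions'' of the packing persist as genuine ends of $N$ rather than collapsing). This is essentially \cite[Construction 3.10]{PUR} adapted to the present goal, and the care needed is in bookkeeping which half-spaces survive the doubling and in handling the ideal-triangle reflection groups appearing in the boundary planes.
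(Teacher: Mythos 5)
Your proposal follows essentially the same route as the paper: reduce to the geometrically finite case via Lemma \ref{limofgeomfin}, apply Lemma \ref{packapprox}, double the scooped manifold along its white faces so that the black ideal triangles assemble (with angles $\pi/2+\pi/2=\pi$, by Lemma \ref{scoopbound}) into totally geodesic thrice-punctured spheres, pass to the associated complete manifold, and transfer the $(\varepsilon,R)$-closeness via Lemma \ref{lemma-eRTransitivity}. The only real divergence is your last step: where you propose a reflection-group analysis of $\Omega(\Delta^+)$ and a finite-sided fundamental polyhedron to establish geometric finiteness, the paper simply observes that $\text{vol}(N'')=2\,\text{vol}(N'_P)<\infty$, so the double is the finite-volume convex core of the complete manifold obtained by attaching Fuchsian ends --- a shortcut that sidesteps exactly the bookkeeping you flag as the main obstacle.
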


\begin{proof}
    Lemma \ref{limofgeomfin} tells us that it will suffice to assume $M$ is geometrically finite. As in Lemma \ref{packapprox}, choose a convex cocompact $(N',h_{q'}')\in\mathcal{H}_\infty$ such that $(N',h_{q'}')$ is $(\varepsilon,R)$-close to $(M,f)$ and admits a circle packing $P$ with $B_{N'}(q',R)$ contained in the scooped manifold $N'_{P}$. \par

    Let $\overline{N'_P}$ be a copy of $N'_P$, with the reversed orientation. Form $N''$ by identifying each white face of $N'_P$ with its copy in $\overline{N'_P}$, via the identity map between these faces. Observe that $N''$ is a hyperbolic $3$-manifold with boundary given by a disjoint union of totally geodesic thrice punctured spheres: indeed, when mirrored white faces are identified, the edges of  black triangles are glued with their mirrored copies, with angles along the glued edges adding up to $\pi$, by Lemma \ref{scoopbound}. Note that $\mathrm{vol}(N_P')<\infty$, so $\mathrm{vol}(N'')=2\mathrm{vol}(N_P')<\infty$. \par

    To the hyperbolic $3$-manifold with totally geodesic boundary $N''$, there is an associated complete geometrically finite hyperbolic $3$-manifold (without boundary) $N$, given by attaching a Fuchsian end to each boundary component of $N''$; in particular, $\CC(N)$ is isometric to $N''$, so $\CC(N)$ indeed has boundary given by a union of totally geodesic thrice punctured spheres.\par

    Finally, let $h_q\in N$ denote the image of $h_{q'}'$ under the identifications and inclusion of $N''$ into $N$. Since $B_{N'}(q',R)\subseteq N_P'$ we have \[B_{N'}(q',R)\subseteq N_P'\hookrightarrow N''\hookrightarrow N\] where each of the inclusions is by isometry, so Lemma \ref{lemma-eRTransitivity} tells us that $(N,h_q)$ is $(\varepsilon,R)$-close to $(M,p)$, since $(N',h_{q'}')$ is.
\end{proof}

 Let $\mathcal{P}$ be the set of isometry classes of finite volume hyperbolic $3$-manifolds with non-empty boundary that consists of a disjoint union of totally geodesic thrice punctured spheres. Equivalently, $\mathcal{P}$ is the collection of convex cores of geometrically finite hyperbolic $3$-manifolds $N$ such that $\partial \CC(N)$ is a disjoint union of thrice punctured spheres. Note that $\mathcal{P}$ is countable, since each element corresponds to a compact $3$-manifold $M$, together with a pants decomposition of $\partial M$, of which there are only countably many.\par

We can now complete the construction of our dense leaf.

\begin{theorem}
\label{thm-denseleaf}
    There exists a hyperbolic $3$-manifold $M$ such that the leaf $\ell(M)$ is dense in $\mathcal{H}_\infty$.
\end{theorem}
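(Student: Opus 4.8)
## Proof Proposal

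The plan is to build $M$ as an infinite connected sum of the countably many pieces in $\mathcal{P}$, glued along totally geodesic thrice-punctured spheres, arranged so that every piece is repeated infinitely often and positioned arbitrarily deep inside $M$. Concretely, by Lemma \ref{Pcountable} enumerate $\mathcal{P} = \{P_1, P_2, \dots\}$ (allowing repetition, so that each isometry type appears infinitely often in the list). Each $P_i$ is a finite-volume hyperbolic manifold with totally geodesic thrice-punctured sphere boundary; a key geometric input is that any two totally geodesic thrice-punctured spheres are isometric (the thrice-punctured sphere has a unique complete hyperbolic structure, which is moreover totally geodesic and rigid), so any boundary component of any $P_i$ can be glued isometrically to any boundary component of any $P_j$ by an orientation-reversing isometry, and the result is again a hyperbolic $3$-manifold (the totally geodesic gluing produces a smooth hyperbolic structure). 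Using this, I would glue the $P_i$ together along a locally finite pattern — say, organize them along a tree or a line, gluing a free boundary component of the current piece to a boundary component of the next — to produce a single complete hyperbolic $3$-manifold $M$ with $CC(M)$ an increasing union $\bigcup_n X_n$ where $X_n$ is the union of the first $n$ pieces. One must check there are no leftover boundary components in the limit (e.g. if each piece has $\geq 2$ boundary components, glue along a bi-infinite chain, or pass to a doubling-type trick so all free faces get used up); the resulting $M$ has empty convex core boundary eventually is not required — what is required is only that $M$ is a complete hyperbolic $3$-manifold containing isometric copies of every $P_i$.

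Next I would verify the density of $\ell(M)$. Fix $(M',f') \in \mathcal{H}_\infty$ and $\varepsilon, R > 0$. By Proposition \ref{approxwithsph}, there is a geometrically finite $(N, h_q) \in \mathcal{H}_\infty$ which is $(\varepsilon, R)$-close to $(M', f')$ with $\partial CC(N)$ a union of totally geodesic thrice-punctured spheres and $B_N(q, R) \subseteq CC(N)$. Now $CC(N)$ is by definition an element of $\mathcal{P}$, hence isometric to $P_i$ for some $i$, and by construction $M$ contains an isometric copy of $P_i = CC(N)$; let $\iota: CC(N) \hookrightarrow M$ be this isometric embedding, and in particular $\iota$ restricts to an isometric embedding of $B_N(q, R)$ into $M$. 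Then Lemma \ref{lemma-eRTransitivity} tells us that $(M, \iota_*(h))$ is $(\varepsilon, R)$-close to $(M', f')$. Since $(M, \iota_*(h)) \in \ell(M)$ and $\varepsilon, R$ were arbitrary, $\ell(M)$ is dense in $\mathcal{H}_\infty$.

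The main obstacle is the construction of $M$ itself: one must glue a locally finite, infinite family of pieces — each appearing infinitely often — into a single connected complete hyperbolic $3$-manifold, with no dangling boundary "at infinity," while keeping each prescribed copy $P_i$ genuinely embedded (not distorted by the gluing, which is automatic here since totally geodesic gluing along the rigid thrice-punctured sphere is isometric on each piece). A clean way to handle the combinatorics: first double every piece of $\mathcal{P}$ along all but two of its boundary thrice-punctured spheres if necessary so one may assume each building block has exactly two free totally geodesic thrice-punctured sphere boundary components, then string all the (countably many, with infinite multiplicity) blocks together in a bi-infinite sequence $\dots, Q_{-1}, Q_0, Q_1, \dots$, gluing the "right" boundary of $Q_k$ to the "left" boundary of $Q_{k+1}$; local finiteness is immediate and completeness follows because each block is complete with geodesic boundary and the gluing locus is compact and totally geodesic. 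Completeness of $M$ and the fact that $CC(M) = \bigcup X_n$ exhausts with acylindrical-in-the-limit pieces will also be what is needed later (Theorem \ref{Thm-GGluePathComp}), but for the present statement only the embedding property and Lemma \ref{lemma-eRTransitivity} are used.
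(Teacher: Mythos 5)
Your density argument is exactly the paper's: apply Proposition \ref{approxwithsph} to get a geometrically finite $(N,h_q)$ with $CC(N)\in\mathcal{P}$ and $B_N(q,R)\subseteq CC(N)$, observe that $CC(N)$ embeds isometrically in $M$, and invoke Lemma \ref{lemma-eRTransitivity}. Your construction of $M$ is also the same in spirit (enumerate $\mathcal{P}$ via Lemma \ref{Pcountable} and chain the pieces together along totally geodesic thrice-punctured spheres), though the paper arranges the combinatorics differently: it glues $M_i$ to $M_{i+1}$ along a single pair of boundary components and caps every remaining boundary component with a Fuchsian end, rather than modifying the blocks so that all boundary faces are consumed in a bi-infinite chain. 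Either arrangement works, and each piece need only appear once, not infinitely often.

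The one genuine gap is your justification of completeness. You assert that completeness follows because ``the gluing locus is compact and totally geodesic,'' but a totally geodesic thrice-punctured sphere is not compact: it has three cusps. Completeness of an infinite gluing along non-compact surfaces is not automatic and is precisely the point that must be checked. The paper verifies it by noting that each cusp neighborhood of each piece is foliated by Euclidean annuli meeting the gluing surface orthogonally, that each such annulus is determined by its circumference, and that the gluing isometries match annuli of equal circumference, so these assemble into Euclidean annuli or tori foliating complete cusp neighborhoods of $M$. Your construction does satisfy this (the gluings are isometries of rigid thrice-punctured spheres), but the reason you give is false as stated, and some version of this cusp-matching check is required.
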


\begin{proof}

    Since $\mathcal{P}$ is countable, we may enumerate $\mathcal{P}=\{M_0,M_1,M_2,...\}$. Note that each $M_i\in\mathcal{P}$ has an even number of boundary components, since the punctures of the thrice punctured sphere boundary components pair up into rank-1 cusps in $M_i$. In particular, for each $i\geq0$ we can select distinct components $L_i$ and $R_i$ of $\partial M_i$. All totally geodesic thrice punctured spheres are isometric, so we may construct a new manifold $M'$ by gluing $M_i$ to $M_{i+1}$ by an orientation reversing isometry taking $R_i$ to $L_{i+1}$ for all $i\geq 0$. See Figure \ref{gluingM} for a depiction of the gluing. \par

    \begin{figure}
    \begin{center}
        \includegraphics[scale=0.3]{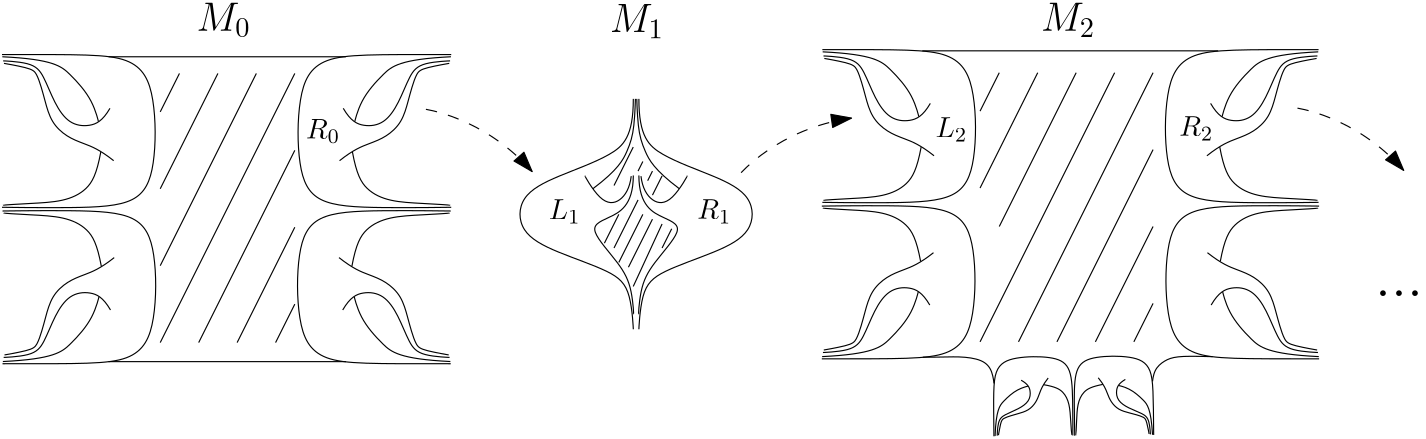}
    \end{center}
    \captionsetup{width=.8\linewidth}
    \caption{Some possible structures for the $\partial M_i$'s are depicted, along with the first few gluings in the construction of $M'$.}
    \label{gluingM}
\end{figure}

    Observe that $M'$ is a hyperbolic $3$-manifold with boundary given by a union of thrice punctured spheres. We now construct $M$ by gluing a Fuchsian end with totally geodesic thrice punctured sphere boundary to each component of the boundary of $M'$, again via an orientation reversing isometry from boundary to boundary. Once we check the completeness of $M$, it will follow that $M$ is the hyperbolic $3$-manifold such that $\CC(M)$ is isometric to $M'$. Note that $M$ is orientable. \par

    To check the completeness of $M$, it will suffice to show that the cusp neighborhoods in $M$ are foliated by Euclidean tori or annuli (see, for example, \cite[Section 3.10]{THUR}). For each piece in the gluing, either one of the $M_i$ or a Fuchsian end, any cusp in that piece is foliated by Euclidean tori or annuli. Any such annulus $S^1\times I$ intersects the boundary of that piece orthogonally, and the annulus is uniquely defined within that cusp neighborhood by its ``circumference," the minimal Euclidean length in the $S^1$ direction. In each gluing, each annulus is glued to another annulus with the same circumference, hence these annuli piece together to form Euclidean annuli or tori, which confirms that $M$ is complete.\par

    Let us now check that $\ell(M)$ is dense in $\mathcal{H}_\infty$. Choose $\varepsilon,R>0$ and $(N',h')\in\mathcal{H}_\infty$. Proposition \ref{approxwithsph} tells us that there is some $(N,h_q)\in\mathcal{H}_\infty$ such that $ \CC(N)\in\mathcal{P}$, $B_{N}(q,R)\subseteq \CC(N)$, and $(N,h_q)$ is $(\varepsilon,R)$-close to $(N',h')$. By construction, $\CC(N)$ embeds isometrically into $M$. Let $f\in \mathcal{F}M$ be the image of $h_q$ under this isometric embedding. Then, Lemma \ref{lemma-eRTransitivity} tells us that $(M,f)$ is $(\varepsilon,R)$-close to $(N',h')$, since $(N,h_q)$ is. Hence, we have shown that $\ell(M)$ is a dense leaf of $\mathcal{H}_\infty$.
\end{proof}

By permuting the enumeration of $\mathcal{P}$ in the proof of Theorem \ref{thm-denseleaf}, one produces uncountably many distinct leaves that are dense in $\mathcal{H}_\infty$.


\section{Paths}
\label{sec-paths}

This section will examine paths in $\mathcal{H}$, starting with Section \ref{subsec-pathexamples}, in which we will present a number of examples. In addition to general interest, these examples will motivate some of our theorems and demonstrate sharpness of some hypotheses. Section \ref{subsec-PathConstraints} will develop the general theory that we will use to understand constraints on possible paths in $\mathcal{H}$. Section \ref{subsec-pathcomps} will conclude with constructing an infinite family of path components of $\mathcal{H}_\infty$ (Theorem \ref{Thm-GGluePathComp}), in particular demonstrating that $\mathcal{H}_\infty$ is not path connected (Corollary \ref{cor-notpathconn}).

\subsection{Examples}
\label{subsec-pathexamples}

We have already encountered the first example, but we will briefly record it here.\par

\vspace{.1in}

\hypertarget{Ex-MovingFrame}{\textbf{Example A.1: (Moving frame)}}
For a fixed hyperbolic $3$-manifold $M$ and path $\gamma:[0,1]\rightarrow\mathcal{F}M$, the map $t\mapsto(M,\gamma(t))$ is a path in $\mathcal{H}$. The fact that this describes a path follows directly from Proposition \ref{pathprop}, which establishes the continuity of the map $L_M:\mathcal{F}M\rightarrow\mathcal{H}$. Since a change of baseframe corresponds to conjugating the corresponding Kleinian group, this example corresponds to fixing a Kleinian group $\Gamma$ and a path $t\mapsto\psi_t\in\mathrm{PSL}_2\C$ and taking the path 
in the geometric topology on $\mathcal{D}$ given by $t\mapsto\psi_t\Gamma\psi_t^{-1}$.$\qed$\par

\vspace{0.1in}

\hypertarget{Ex-FrameToInfty}{\textbf{Example A.2: (Frame to infinity)}} Suppose that for a fixed hyperbolic $3$-manifold $M$ and smooth path $\gamma:[0,\infty)\rightarrow M$, the injectivity radius of $M$ along $\gamma$ satisfies $\mathrm{inj}_{M}(\gamma(t))\rightarrow\infty$ as $t\rightarrow\infty$. Then, as in Lemma \ref{pathcomp}, if $f\in F_{\gamma(0)}M$ and $f^t$ is the time $t$ parallel transport of $f$ along $\gamma$, then $(M,f^t)$ is a path in $\mathcal{H}$ that converges to $(\mathbb{H}^3,\mathcal{O}_O)$.$\qed$\par

Example \hyperlink{Ex-FrameToInfty}{A.2} is an initial example of the topology of the underlying manifold changing along a path in $\mathcal{H}$. As seen in Lemma \ref{pathcomp}, a path such as $\gamma$ may be constructed in $M=\mathbb{H}^3/\Gamma$ for any Kleinian group $\Gamma$ with $\Omega(\Gamma)\neq\varnothing$.

\vspace{0.1in}

\hypertarget{Ex-QCDef}{\textbf{Example B.1: (Quasi-conformal deformations)}} \textit{Suppose $\Gamma$ is a nonelementary geometrically finite Kleinian group. For any path of geometrically finite representations $G:[0,1]\rightarrow D_{\GF}(\Gamma)$ (see Section \ref{Sec-Reps} for definitions), the map $t\mapsto G(t)(\Gamma)$ is a path in $\mathcal{D}$.}\par

\vspace{0.05in}

Define $G_{im}:[0,1]\rightarrow\mathcal{D}$ by $G_{im}(t)=(G(t))(\Gamma)$. The proof of Marden's stability theorem \cite{MAR} demonstrates that the Dirichlet fundamental polyhedra with respect to the fixed basepoint $O\in\mathbb{H}^3$ for $G_{im}(t)$ vary uniformly continuously on compact balls about $O\in\mathbb{H}^3$ in the Hausdorff topology. In particular, $G_{im}(t)$ is a path of Kleinian groups with respect to the topology of polyhedral convergence (see \cite{MAT}), hence is a path of Kleinian groups in the geometric topology on $\mathcal{D}$. In other words, $G$ may also be viewed as a path into the space $S(\Gamma)$ of discrete and faithful representation of $\Gamma$ into $\mathrm{PSL}_2\C$, equipped with the topology of strong convergence. We also note that the continuity of $G_{im}$ with respect to the geometric topology follows immediately from a result of Anderson and Canary on strong convergence \cite{ANCAN}, since each $G(t)$ is type preserving. Following from the correspondence between quasiconformal deformations and quasi-isometries \cite{RENORM}, the path in $\mathcal{H}$ corresponding to $G_{im}$ is a path of quasi-isometric framed hyperbolic $3$-manifolds, where cusps map to cusps. $\qed$

\vspace{0.1in}

\hypertarget{Ex-CuspRank}{\textbf{Example B.2: (Cusp rank)}} \textit{Let $\Gamma$ be a geometrically finite Kleinian group containing a parabolic element $\psi\in\Gamma$ such that $\mathbb{H}^3/\Gamma$ has a rank-$1$ cusp corresponding to $\psi$; more precisely, $\langle\psi\rangle$ is a maximal abelian subgroup of $\Gamma$. Then, there exists a path $\varphi:[t_0,\infty)\rightarrow\mathrm{PSL}_2\C$ of parabolic isometries such that $\langle\psi,\varphi(t)\rangle$ is a rank-$2$ abelian group for all $t$ and $t\mapsto\langle\Gamma,\varphi(t)\rangle$ is a path in $\mathcal{D}$ converging to $\Gamma$ as $t\rightarrow\infty$.}\par

\vspace{0.05in}

This example is discussed  by Fuchs, Purcell and Stewart in \cite[Section 4]{PUR}, and we will simply expand on some details of their analysis, with a focus on the topology of the approximating manifolds. By conjugating $\Gamma$, we may assume $\psi$ corresponds to the M\"{o}bius transformation $z\mapsto z+1$. For $t>0$, let $\varphi_t$ denote the element of $\mathrm{PSL}_2\C$ corresponding to $z\mapsto z+it$. Using the upper half space model for $\mathbb{H}^3$, define the subsets \[B_{\pm}=B_{\pm}(t)=\{(z,h)\in\C\times\mathbb{R}_+=\mathbb{H}^3\;|\;\pm\mathrm{Im}(z)\geq t/2\}.\] An analysis (see \cite{PUR}) of the \textit{standard cusp regions} corresponding to $\psi$ as defined in \cite{BOWGF} tells us that for $t$ sufficiently large $B_+,B_-$, and $B_+\cup B_-$ are each precisely invariant for $\langle \psi\rangle$ in $\Gamma$. For such $t$, since $B_+=\overline{\mathbb{H}^3\backslash \varphi_t(B_-)}$, the second Klein-Maskit combination theorem (see \cite{MAT}, \cite{MASK}, \cite{ABI}) tells us that the group $\Gamma_t=\langle\Gamma,\varphi_t\rangle$ is discrete. Fix $t_0>0$ so that $t$ is ``sufficiently large" for all $t\geq t_0$: $\Gamma_t\in\mathcal{D}$ for all $t\geq t_0$. That $t\mapsto\Gamma_t$ describes a path in $\mathcal{D}$ for $t\in[t_0,\infty)$ follows from Example \hyperlink{Ex-QCDef}{B.1}. \par

Before we prove that $\Gamma_t\rightarrow\Gamma$ as $t\rightarrow\infty$, let us discuss how to construct $M_t=\mathbb{H}^3/\Gamma_t$ from $M=\mathbb{H}^3/\Gamma$. Let $\pi:\mathbb{H}^3\rightarrow M$ denote the projection map corresponding to $\Gamma$. One constructs $M_t$ by gluing the two boundary components of $M\backslash(\pi(B_+)\cup\pi(B_-))$. In the language of pared manifolds (see \cite[Section 7]{THUI}), $M$ is homeomorphic to the interior of a pared closed $3$-manifold $(\overline{M},P)$, and the rank-$1$ cusp associated to $\psi$ corresponds to a component of $P$ which is an incompressible annulus $A$ contained in a surface $S\subseteq\partial\overline{M}$. Let $c\subseteq S$ denote a curve representing the homotopy class of a core curve of $A$. The surface $S\subseteq\partial\overline{M}$ has a neighborhood in $\overline{M}$ parameterized by $S\times[0,1]$, where $S=S\times\{1\}\subseteq\partial\overline{M}$. Let $\overline{M}'$ denote the closed $3$-manifold given by drilling out an open tubular neighborhood of $c\times\{1/2\}$; assume this neighborhood is chosen to be sufficiently small to be contained in $S\times[1/4,3/4]$, and let $T$ be the toroidal boundary of this neighborhood. Set $P'=(P\backslash A)\cup T\subseteq\partial\overline{M}'$. A cartoon of the local picture of the construction of $(\overline{M}',P')$ from $(\overline{M},P)$ is given in Figure \ref{Figure-CuspRankPlus1}. Then, the pared closed $3$-manifold corresponding to each $M_t$ is $(\overline{M}',P')$.\par

\begin{figure}
    \begin{center}
        \includegraphics[scale=0.35]{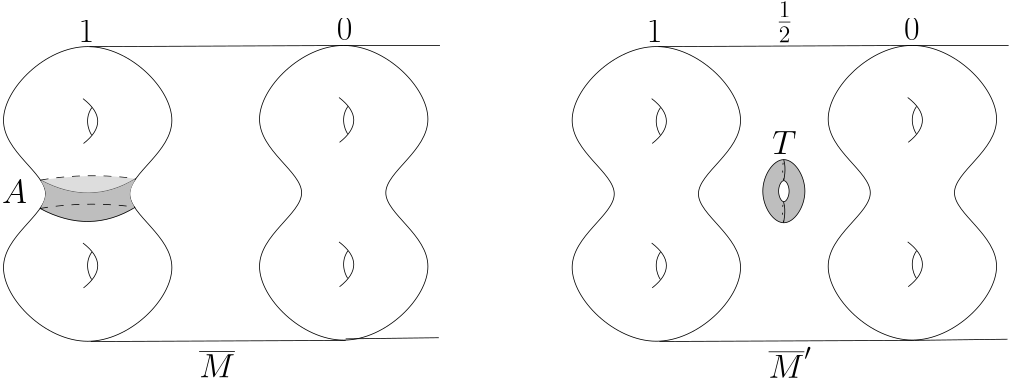}
    \end{center}
    \captionsetup{width=.8\linewidth}
    \caption{This depicts the construction of $(\overline{M}',P')$ from $(\overline{M},P)$ in Example B.2, depicting $S$ as a genus-$2$ surface and the paring as shaded.}
    \label{Figure-CuspRankPlus1}
\end{figure}

Let us confirm that $\Gamma_t\rightarrow\Gamma$ as $t\rightarrow\infty$. Note that for large enough $t$, $O\not\in B_+(t)\cup B_-(t)$, where ${\mathcal{O}}_O$ is our fixed baseframing for $\mathbb{H}^3$. From the above construction of $M_t$ from $M$, we can then see that for any $R>0$ and any $t$ sufficiently large, the inclusion map $B_{\mathbb{H}^3}(O,R)\hookrightarrow\mathbb{H}^3$ descends to an embedding $B_{M}(\pi(O),R)\hookrightarrow M_t$. Thus, Definition \ref{eRclosedef2} directly tells us that $\Gamma_t\rightarrow\Gamma$ as $t\rightarrow\infty$. $\qed$ \par

\vspace{0.1in}

\noindent The following example appears as Theorem \ref{introthm-NonTameLimit} in the introduction.

\stepcounter{theorem}
\hypertarget{Ex-NonTame}{\textbf{Example B.3: (Non-tame limit)}} \textit{For a closed surface $S$ of genus at least two, let $C\subseteq S\times\Z\subseteq S\times \R$ be a collection of disjoint simple closed curves so that no two curves in $C$ are isotopic in $S\times \R$ and the curves $C_n=C\cap(S\times\{n\})$ give a pants decomposition of the surface $S_n=S\times\{n\}$ for all $n\in \Z$. Then, there exists a path $G:[1,\infty]\rightarrow\mathcal{D}$ so that $G(t)$ is geometrically finite for all $t<\infty$ and $\mathbb{H}^3/G(\infty)$ is isometric to the unique hyperbolic structure on $(S\times \R)\backslash C$.}\par

\vspace{0.05in}

For the construction, it will be slightly easier to replace the assumption that $C_0$ is a pants decomposition of $S_0$ with the assumption that $C_0=\varnothing$; this simply amounts to a reparameterization of $S\times\R$ and $C$, so nothing is lost. For each $n$, let $P_n\subseteq S_n$ denote a tubular neighborhood of $C_n$ as a submanifold of $S_n$, so that $P_n$ is a disjoint union of annuli whose core curves are the elements of $C_n$. It follows, for example by \cite[Theorem 6.4]{THUII}, that there is a hyperbolic structure, call it $M_1$, on the interior of the pared compact $3$-manifold $(S\times[-1,1],P_{-1}\cup P_{1})$ such that the annuli in the paring correspond to rank-$1$ cusps in $M_1$. Note that since $C_{-1}$ and $C_{1}$ are pants decompositions of $S_{-1}$ and $S_1$, respectively, $\partial \CC(M_1)$ must be a disjoint union of totally geodesic thrice punctured spheres, and hence the hyperbolic structure on $M_1$ is uniquely determined. Fix a Kleinian group $\Gamma_1\in\mathcal{D}$ so that $M_1=\mathbb{H}^3/\Gamma_1$. \par

Let $T\subseteq S\times\R$ be a small closed tubular neighborhood of $C$ so that $\partial T$ is a disjoint union of tori and $(S\times\R)\backslash C$ is homeomorphic to $(S\times \R)\backslash T$. Additionally, let $T_n$ denote the union of the components of $T$ corresponding to curves in $C_n$, and assume $T_n\subseteq S\times[n-(1/4),n+(1/4)].$\par

For $n\geq1$, assume that we have inductively constructed a Kleinian group $\Gamma_n$ so that $\Gamma_{n-1}\leq\Gamma_n$ (where $\Gamma_0=\{\mathds{1}\}$) and $M_n=\mathbb{H}^3/\Gamma_n$ gives a hyperbolic structure on interior of the pared $3$-manifold 
\begin{equation}\label{eq:paring}
    \overline{M_n}=(S\times[-n,n]\backslash\textstyle\bigcup_{-n< j<n} \mathrm{int}(T_j),P_{-n}\cup P_n\cup(\textstyle\bigcup_{-n<j<n}\partial T_j)),
    \tag{$\star$}
\end{equation}
 relative to the paring. The hyperbolic structure on $M_n$ is unique, since $\partial \CC(M_n)$ is a disjoint union of totally geodesic thrice punctured spheres. For each rank-$1$ cusp of $M_n$, Example \hyperlink{Ex-CuspRank}{B.2} allows us to bring in a parabolic isometry from infinity to construct a path in $\mathcal{D}$ from $\Gamma_n$ to $\Gamma_n'$ so that $\Gamma_n$ is contained in all intermediate groups, including $\Gamma_n'$, and $M_n'=\mathbb{H}^3/\Gamma_n'$ is homeomorphic to the interior of $S\times[-(n+1),n+1]\backslash(\bigcup_{-n\leq j\leq n} T_n)$. Note that $\Gamma_n'$ is not uniquely defined: we bring in each new parabolic isometry along a path from infinity in $\mathrm{PSL}_2\C$, but there is choice in ``how far" to bring each in along their respective paths. Note, however, that the rigidity of $M_n$ implies that $\CC(M_n)$ embeds isometrically into $M_n'$ under the natural quotient map $M_n\rightarrow M_n'$ arising from the inclusion $\Gamma_n\leq\Gamma_n'$.\par

By work of Ahlfors, Bers, Marden, and Sullivan (see eg. \cite{MAT}) we may identify the space of conjugacy classes of geometrically finite representations $\GF(\Gamma_n')$ with the Teichm\"{u}ller space $\mathcal{T}(S_{-(n+1)})\times \mathcal{T}(S_{n+1})$. The pants decompositions of $S_{-(n+1)}$ and $S_{n+1}$ given by $C_{-(n+1)}$ and $C_{n+1}$ respectively provide Fenchel-Nielsen coordinates \cite{BEN} for the respective Teichm\"{u}ller spaces. Consider a path $t\mapsto[\rho_t]\in \GF(\Gamma_n')$ corresponding to sending the lengths of the curves in $C_{-(n+1)}$ and $C_{n+1}$ to zero as $t\rightarrow\infty$. There is a unique lift to a path $t\mapsto\rho_t\in D_{\GF}(\Gamma_n')$ so that $\rho_t|_{\Gamma_n}$ is the identity map for all $t$, by Lemma \ref{Lemma-AHNonelemSubgroupConvg} and the rigidity of $\CC(M_n)$. By Example \hyperlink{Ex-QCDef}{B.1}, the map $t\mapsto\rho_t(\Gamma_n')\in\mathcal{D}$ is a path in $\mathcal{D}$.\par

Observe that the hypothesis that all curves in $C$ are non-isotopic guarantees that as a pared $3$-manifold, $\overline{M_{n+1}}$ is acylindrical. In the language of \cite{EGG}, this implies that the gallimaufry associated to the paring on $\overline{M_{n+1}}$ is doubly incompressible. In particular, it then follows from \cite{EGG} and Lemma \ref{Lemma-AHNonelemSubgroupConvg} that as $t\rightarrow\infty$, the path $t\mapsto\rho_t(\Gamma_n')$ converges to a Kleinian group $\Gamma_{n+1}$ so that $M_{n+1}=\mathbb{H}^3/\Gamma_{n+1}$ gives a hyperbolic structure on the interior of the pared hyperbolic $3$-manifold $\overline{M_{n+1}}$, as in (\ref{eq:paring}). Noting that $\Gamma_{n}\leq\Gamma_{n+1}$, the inductive step is complete.\par

Hence, we may concatenate and reparameterize the above paths to define a path $G:[1,\infty)\rightarrow\mathcal{D}$ so that $G(n)=\Gamma_n$ for all $n$ and $\Gamma_n\leq G(t)$ for all $t\geq n$. Additionally, observe that for all $t\geq n$ the natural projection $M_n\rightarrow \mathbb{H}^3/G(t)$ embeds $\CC(M_n)$ isometrically into $\mathbb{H}^3/G(t)$. By Lemma \ref{lemma-eRTransitivity}, the path $G(t)$ converges to some $\Gamma_\infty$ where $M_\infty=\mathbb{H}^3/\Gamma_\infty$ is homeomorphic to $(S\times\R)\backslash C$. $\qed$\par

An example of a collection of curves such as those in Example \hyperlink{Ex-NonTame}{B.3} may be constructed by picking a pants decomposition $D\subseteq S$ of $S$, a pseudo-Anosov homeomorphism $f:S\rightarrow S$, and letting $C=\bigcup(f^n(D)\times\{n\})$. In this case, the hyperbolic structure on $M_\infty$, which is homeomorphic to $(S\times\R)\backslash C$, has a $\Z$-symmetry. In particular, each piece $(S\times[n,n+1])\backslash C$ is isometric.\par

\vspace{0.1in}

\hypertarget{Ex-Schottky}{\textbf{Example C.1: (Schottky)}} In this example, we will construct a path of rank-2 Schottky (free) groups that converges to an infinite cyclic group. For $s>1$, define \[\psi_s=\begin{bmatrix}
s & 0 \\
s & 1/s 
\end{bmatrix}\in\mathrm{PSL}_2\R.\] As discussed in \cite{FORD}, if \[C_s=\{z\in\C \;|\; |z+1/s^2|=1/s\}\;\;\text{ and }\;\;C_s'=\{z\in\C \;|\; |z-1|=1/s\},\] then $\psi_s$ acts on the Riemann sphere $S^2_\infty$ by mapping the exterior of $C_s$ onto the interior of $C_s'$.\par

Let $D$ and $D'$ be two circles on the Riemann sphere whose designated interiors are disjoint from each other and disjoint from $\{z\in\C:|z|\leq 2\}$ (and hence disjoint from $C_s,C_s'$ for all $s>1$). Now, choose $\varphi\in\mathrm{PSL}_2\C$ to be any isometry taking the exterior of $D$ to the interior of $D'$.\par

For $t\in(1,\infty]$, let $\Gamma_t\leq\mathrm{PSL}_2\C$ be the subgroup given by

\[ \Gamma_t = 
    \begin{cases} 
      \langle\varphi,\psi_t\rangle, & 1<t<\infty \\
      \langle\varphi\rangle, & t=\infty .
   \end{cases}
\]
It is classical, following from the first Klein-Maskit combination theorem (\cite{MAT}, \cite{MASK}, \cite{ABI}), that each $\Gamma_t$ is a discrete free group. That $t\mapsto\Gamma_t$ is a path in $\mathcal{D}$ on the domain $t\in(1,\infty)$ follows from Example \hyperlink{Ex-QCDef}{B.1}. To see that $\Gamma_t\rightarrow\Gamma_\infty$ as $t\rightarrow\infty$, note that each word $w$ in the abstract free group $\langle a,b\rangle$ corresponds to a path $t\mapsto w_t$ in $\mathrm{PSL}_2\C$ with domain $t\in(0,\infty)$ under the isomorphism $\langle a,b\rangle\rightarrow \Gamma_t$ given by $a\mapsto\varphi$, $b\mapsto\psi_t$. For any reduced word $w\in\langle a,b\rangle\backslash \langle a\rangle$, the path $w_t$ must diverge to $\infty$ as $t\rightarrow\infty$; for example, note that a loop in $\mathbb{H}^3/\Gamma_t$ representing the conjugacy class of $w_t\in\Gamma_t$ must have length at least that of the closed geodesic corresponding to $\psi_t$. Hence, the only elements of $\mathrm{PSL}_2\C$ that may be accumulated on by the $\Gamma_t$'s as $t\rightarrow\infty$ are the elements of $\langle \varphi\rangle$, so it follows from Fact \ref{ChabProps} that $\Gamma_t\rightarrow\Gamma_\infty$ as $t\rightarrow \infty$.$\qed$\par

\vspace{0.1in}

\hypertarget{Ex-CCProd}{\textbf{Example C.2: (Convex cocompact free product)}} \textit{For Kleinian groups $G$ and $H$ with $\Omega(G),\Omega(H)\neq\varnothing$, there exists a path $\varphi:[0,\infty)\rightarrow\mathrm{PSL}_2\C$ so that if $H_t=\varphi(t)H\varphi(t)^{-1}$, then the following hold:
\begin{enumerate}
    \item each $\langle G,H_t\rangle$ is discrete and splits as a free product $\langle G,H_t\rangle =G*H_t$,
    \item $t\mapsto G*H_t$ is a path in $\mathcal{D}$,
    \item $G*H_t\rightarrow G$ in $\mathcal{D}$ as $t\rightarrow\infty$.
\end{enumerate}}

\vspace{0.05in}

We may choose an open disk $D_1\subseteq\Omega(G)$ such that $D_1$ is precisely invariant under $\{\mathds{1}\}$ in $G$. Fix $x_1,x_2\in D_1$. By replacing $H$ with a $\mathrm{PSL}_2\C$-conjugate subgroup, we may assume that $x_1\in\Omega(H)$. Now, we may choose an open disk $D_2\subseteq\Omega(H)$ centered at $x_1$ such that $D_2$ is precisely invariant under $\{\mathds{1}\}$ in $H$.\par

For $t>0$, let $\varphi_t$ be the hyperbolic isometry with repelling fixed point $x_1$, attracting fixed point $x_2$, no torsion part, and hyperbolic translation length $t$ along the geodesic limiting to $x_1$ and $x_2$. For $t>0$, define \[\Gamma_t=\langle G,\varphi_t H \varphi_t^{-1}\rangle.\] For $t$ sufficiently large, say for $t\geq t^*$, we have $S^2_\infty\backslash\overline{D_1}\subseteq\varphi_t(D_2)$. Since $D_2$ is precisely invariant under $\{\mathds{1}\}$ in $H$, it follows that for $t\geq t^*$, $S^2_\infty\backslash\overline{D_1}$ is precisely invariant under $\{\mathds{1}\}$ in $\varphi_tH\varphi_t^{-1}$. Hence, since $D_1$ is precisely invariant under $\{\mathds{1}\}$ in $G$, the first Klein-Maskit combination theorem, as stated in \cite{ABI}, implies that $\Gamma_t$ is discrete for $t\geq t^*$, with $\Gamma_t\cong G * \varphi_t H \varphi_t^{-1}\cong G * H$. A similar argument as made in Example \hyperlink{Ex-Schottky}{C.1} shows that $\Gamma_t$ is a path in $\mathcal{D}$ on the domain $t\in[t^*,\infty)$ and that $\Gamma_t\rightarrow G$ as $t\rightarrow\infty$. \par

Let us also note what is occurring in the corresponding path of framed manifolds in $\mathcal{H}$. Let $N_1=\mathbb{H}^3/G$, $N_2=\mathbb{H}^3/H$, and $M_t=\mathbb{H}^3/\Gamma_t$, for $t\geq t^*$. Roughly, the convex core $\CC(M_t)$ contains isometrically embedded copies of $\CC(N_1)$ and $\CC(N_2)$ that are connected by a thin $1$-handle whose length goes to infinity as $t\rightarrow\infty$. If $f^t\in\mathcal{F}M_t$ is the $\Gamma_t$-projection of the frame $\mathcal{O}_O$, then the embedded $\CC(N_1)\hookrightarrow M_t$ stays fixed relative to $f^t$, while the embedded $\CC(N_2)\hookrightarrow M_t$ gets infinitely far away from $f^t$, disappearing and leaving behind $N_1$ as $t\rightarrow\infty$. $\qed$\par

\textbf{Remark:} There is a sense in which Examples \hyperlink{Ex-QCDef}{B.1} and \hyperlink{Ex-CCProd}{C.2} are representative of all paths of convex cocompact Kleinian groups. Specifically, if every group along a path in $\mathcal{D}$ is convex cocompact, the only way the isomorphism type can change is for free factors to come in from or diverge to infinity. This will be made precise in future work of the author.


\subsection{Path Machinery}
\label{subsec-PathConstraints}

This subsection will develop machinery for discussing paths in $\mathcal{H}$, and will establish some preliminary results on how these paths behave. We start with a lemma concerning sequences in $\mathcal{D}$.

\begin{lemma}
    \label{Unbhd}
    Fix $\Gamma\in\mathcal{D}$. For each $\psi\in\Gamma$, there exists a neighborhood $U_\psi\subseteq\mathrm{PSL}_2\C$ such that if  $\Gamma_n\rightarrow\Gamma$ is a convergent sequence in $\mathcal{D}$, then $|\Gamma_n\cap U_\psi|=1$ for $n$ sufficiently large.
\end{lemma}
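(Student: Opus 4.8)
The plan is to choose $U_\psi$ to be any relatively compact open neighborhood of $\psi$ in $\text{PSL}_2\C$ small enough that $\overline{U_\psi}\cap\Gamma=\{\psi\}$; such a neighborhood exists precisely because $\Gamma$ is discrete. Given a convergent sequence $\Gamma_n\to\Gamma$ in $\mathcal{D}$, the lower bound $|\Gamma_n\cap U_\psi|\ge 1$ for large $n$ is immediate from condition~(2) of Fact~\ref{ChabProps}: pick $\psi_n\in\Gamma_n$ with $\psi_n\to\psi$ in $\text{PSL}_2\C$, so $\psi_n\in U_\psi$ eventually.

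The content is the upper bound $|\Gamma_n\cap U_\psi|\le 1$, which I would prove by contradiction. If $|\Gamma_n\cap U_\psi|\ge 2$ for infinitely many $n$, then along a subsequence we may pick $\alpha_n\neq\beta_n$ in $\Gamma_n\cap U_\psi$; compactness of $\overline{U_\psi}$ lets us pass to a further subsequence with $\alpha_n\to\alpha$ and $\beta_n\to\beta$ for some $\alpha,\beta\in\overline{U_\psi}$, and then condition~(1) of Fact~\ref{ChabProps} forces $\alpha,\beta\in\Gamma\cap\overline{U_\psi}=\{\psi\}$. Hence $\gamma_n:=\alpha_n\beta_n^{-1}$ lies in $\Gamma_n$, is nontrivial, and converges to $\mathds{1}$. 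The heart of the argument is to rule this out when the limit $\Gamma$ is discrete: for $n$ large, write $\gamma_n=\exp(X_n)$ with $X_n$ a nonzero element of the Lie algebra $\mathfrak{sl}_2\C$ tending to $0$, set $Y_n=X_n/|X_n|$, and pass to a subsequence with $Y_n\to Y$, $|Y|=1$. For each fixed $t\in\R$, the integers $m_n=\lfloor t/|X_n|\rfloor$ satisfy $m_nX_n\to tY$, so $\gamma_n^{m_n}=\exp(m_nX_n)\to\exp(tY)$; since $\gamma_n^{m_n}\in\Gamma_n$, condition~(1) of Fact~\ref{ChabProps} puts $\exp(tY)$ in $\Gamma$ for every $t$, so $\Gamma$ contains the nontrivial one-parameter subgroup $\{\exp(tY):t\in\R\}$, contradicting discreteness.

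I expect the ``no arbitrarily small nontrivial elements in a geometric limit'' step to be the main point, although it is short; the rest is bookkeeping with the two conditions of Fact~\ref{ChabProps} and compactness of $\overline{U_\psi}$. One could instead derive that step from the existence of a Zassenhaus neighborhood of $\mathds{1}$ in $\text{PSL}_2\C$, or from J{\o}rgensen's inequality applied to $\langle\gamma_n,\alpha_n\rangle$ (with a small side case when this group is elementary), but the one-parameter-subgroup argument above is self-contained and needs no case analysis. Note also that torsion-freeness of the groups in $\mathcal{D}$ is not used here — only $\gamma_n\neq\mathds{1}$ matters.
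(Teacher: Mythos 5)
Your proof is correct and is essentially the paper's argument: both reduce to showing that a geometric limit admits no nontrivial elements converging to the identity, and both do so by writing such elements in exponential coordinates as $\exp(X_n)$ and taking powers $\gamma_n^{m_n}=\exp(m_nX_n)$ of definite size, then invoking condition (1) of Fact \ref{ChabProps} against the discreteness of $\Gamma$. The only cosmetic differences are that the paper lands the powers in a fixed compact annulus $\overline{U'\setminus U}$ disjoint from $\Gamma$ rather than producing a full one-parameter subgroup, and it reduces general $\psi$ to the identity case by constructing a uniform neighborhood $U_\psi=\psi\cdot U^*$ via the product map, whereas you form the quotient $\alpha_n\beta_n^{-1}$ of two nearby elements directly.
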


\begin{proof}

    First, we will address the case $\psi=\mathds{1}$. Identify the Lie algebra $\mathfrak{psl}_2\C$ with the tangent space $T_{\mathds{1}}\mathrm{PSL}_2\C$ and fix a norm $\|\cdot\|$ on $\mathfrak{psl}_2\C$. For $r>0$, let $B_r=\{v\in\mathfrak{psl}_2\C\;|\;\|v\|<r\}$. Note that we may pick $r>0$ small enough so that the Riemannian exponential map $\mathrm{exp}:\mathfrak{psl}_2\C\rightarrow\mathrm{PSL}_2\C$ restricts to a diffeomorphism on $B_r$ (see eg. \cite{LEE}), and the discreteness of $\Gamma$ allows us to ensure $\overline{\mathrm{exp}(B_r)}\cap\Gamma=\{\mathds{1}\}$. Let $U=\mathrm{exp}(B_{r/2})$ and $U'=\mathrm{exp}(B_r)$, and observe that $\overline{U'\backslash U}$ is compact and $\overline{U'\backslash U}\cap\Gamma=\varnothing$.\par

    Now, suppose for contradiction that $\{\Gamma_n\}$ is a sequence in $\mathcal{D}$ converging to $\Gamma$ such that there exists $\psi_n\in(\Gamma_n\cap U)\backslash\{\mathds{1}\}$ for all $n$. For each $n$, define $v_n\in B_{r/2}$ such that $\mathrm{exp}(v_n)=\psi_n$, and choose $m_n\in\Z_+$ such that $r/2\leq\|m_n\cdot v_n\|<r$. Then, \[\psi_n^{m_n}=\mathrm{exp}(m_n\cdot v_n)\in\overline{U'\backslash U}\cap\Gamma_n.\] Since $\overline{U'\backslash U}$ is compact, the sequence $\{\psi_n^{m_n}\}_{n\geq1}$ has an accumulation point $\psi^*$ in $\overline{U'\backslash U}$, so the first condition for convergence in the geometric topology in Fact \ref{ChabProps} implies that $\psi^*\in\overline{U'\backslash U}\cap\Gamma$. This, however, is a contradiction, so taking $U_{\mathds{1}}=U$ suffices.\par

    To address case of general $\psi\in\Gamma$, first consider the map $F:\mathrm{PSL}_2\C\times\mathrm{PSL}_2\C\rightarrow\mathrm{PSL}_2\C$ given by $F(\varphi,\tau)=\varphi^{-1}\tau$. Note that $F$ is continuous, so $F^{-1}(U)$ is an open neighborhood of $\mathds{1}\times\mathds{1}$, where $U$ is the same subset as in the previous paragraphs. We can then find $U^*\subseteq U$ containing $\mathds{1}$ such that $U^*\times U^*\subseteq F^{-1}(U)$. Observe then that $\varphi^{-1}\tau\in U$ for all $\varphi,\tau\in U^*$. \par

    For $\psi\in\Gamma\backslash\{\mathds{1}\}$, set $U_\psi=\psi\cdot U^*$. As above, consider a sequence $\{\Gamma_n\}$ converging in $\mathcal{D}$ to $\Gamma$. If $\varphi_n,\tau_n\in\Gamma_n\cap U_\psi$ for some $n$, then $\psi^{-1}\varphi_n,\psi^{-1}\tau_n\in U^*$, so the choice of $U^*$ tells us \[\varphi_n^{-1}\tau_n=(\psi^{-1}\varphi_n)^{-1}(\psi^{-1}\tau_n)\in U=U_{\mathds{1}}.\] We know from above that for $n$ sufficiently large, we must have $\Gamma_n\cap U=\{\mathds{1}\}$, so it follows that $|\Gamma_n\cap U_\psi|\leq1$ for $n$ sufficiently large. Since $\Gamma_n\rightarrow\Gamma$, the second condition for convergence in the geometric topology given in Fact \ref{ChabProps} tells us that we in fact have $|\Gamma_n\cap U_\psi|=1$ for $n$ sufficiently large.
\end{proof}

\begin{defn}
\label{GammaPathDef}
    Let $\Gamma:[a,b]\rightarrow\mathcal{D}$ be a path. For $s\in [a,b]$ and $\psi\in\Gamma(s)$, we will call a path $j:I\rightarrow\mathrm{PSL}_2\C$ a $\Gamma$\textbf{-path through }$\psi$\textbf{ based at} $s$ if $j$ satisfies the following  conditions: 
\begin{enumerate}[i.]
    \item $I\subseteq[a,b]$ is an interval containing $s$
    \item $j(t)\in\Gamma(t)$ for all $t\in I$
    \item $j(s)=\psi$.
\end{enumerate}
\end{defn}

For the remainder of the subsection, fix a path $\Gamma:[0,1]\rightarrow\mathcal{D}$, and let $\Gamma_t=\Gamma(t)$. The following proposition gives local existence of $\Gamma$-paths through $\psi$ based at $s$ for any $s\in[0,1]$ and $\psi\in\Gamma_s$. This may be interpreted as a path analogue of the second condition for sequential convergence in the geometric topology on $\mathcal{D}$, as stated in Fact \ref{ChabProps}.

\begin{prop}
\label{pslpath}
    For all $s\in[0,1]$ and $\psi\in\Gamma_s$, there exists an interval open in $[0,1]$ that is the domain of a $\Gamma$-path through $\psi$ based at $s$. Further, if $j:I\rightarrow\mathrm{PSL}_2\C$ is any $\Gamma$-path through $\psi$ based at $s$, $j$ is the unique $\Gamma$-path through $\psi$ based at $s$ with domain $I$.
\end{prop}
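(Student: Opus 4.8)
The plan is to prove existence and uniqueness separately, with uniqueness being the conceptually cleaner step that I would actually carry out first, since the proof of existence will essentially bootstrap off the uniqueness mechanism.

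\emph{Uniqueness.} Suppose $j_1, j_2 : I \to \text{PSL}_2\C$ are two $\Gamma$-paths through $\psi$ based at $s$, so $j_1(s) = j_2(s) = \psi$. Consider the set $A = \{t \in I : j_1(t) = j_2(t)\}$. It contains $s$ and is closed in $I$ by continuity of $j_1, j_2$, so it suffices to show $A$ is open in $I$. Fix $t_0 \in A$ and let $\chi = j_1(t_0) = j_2(t_0) \in \Gamma_{t_0}$. Apply Lemma \ref{Unbhd} to the group $\Gamma_{t_0}$ and the element $\chi$: there is a neighborhood $U_\chi \subseteq \text{PSL}_2\C$ of $\chi$ such that any sequence $\Gamma_n \to \Gamma_{t_0}$ eventually meets $U_\chi$ in exactly one point. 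Since $t \mapsto \Gamma_t$ is continuous at $t_0$ and $t \mapsto j_i(t)$ is continuous at $t_0$ with $j_i(t_0) = \chi \in U_\chi$, there is a neighborhood $I_0$ of $t_0$ in $I$ so that for all $t \in I_0$, both $j_1(t)$ and $j_2(t)$ lie in $U_\chi$. I then need to upgrade Lemma \ref{Unbhd} from sequences to the statement that for $t$ in a (possibly smaller) neighborhood of $t_0$, $|\Gamma_t \cap U_\chi| = 1$; this follows by a routine subsequence argument — if it failed there would be a sequence $t_n \to t_0$ with $|\Gamma_{t_n} \cap U_\chi| \neq 1$, contradicting Lemma \ref{Unbhd} applied to $\Gamma_{t_n} \to \Gamma_{t_0}$ (one may need to shrink $U_\chi$ to a slightly smaller neighborhood whose closure is still $\subseteq U_\chi$ and contains $\chi$, to run the accumulation-point argument exactly as in that lemma). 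Then for $t \in I_0$, both $j_1(t), j_2(t)$ lie in the singleton $\Gamma_t \cap U_\chi$, forcing $j_1(t) = j_2(t)$, so $I_0 \subseteq A$. Hence $A$ is open, closed, and nonempty, so $A = I$.

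\emph{Existence.} Let $B \subseteq [0,1]$ be the set of $t$ such that there is a $\Gamma$-path through $\psi$ based at $s$ whose domain is an interval open in $[0,1]$ containing both $s$ and $t$; equivalently, let me instead directly show the set of endpoints reachable is open. Define $I^* \subseteq [0,1]$ to be the union of all intervals $I$ (open in $[0,1]$, containing $s$) that are domains of $\Gamma$-paths through $\psi$ based at $s$. By the uniqueness just proved, any two such $\Gamma$-paths agree on the intersection of their domains, so they glue to a single well-defined $\Gamma$-path $j^* : I^* \to \text{PSL}_2\C$ through $\psi$ based at $s$, and $I^*$ is an interval open in $[0,1]$ containing $s$. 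It remains only to see $I^*$ is nonempty as an open-in-$[0,1]$ neighborhood of $s$, i.e. that a $\Gamma$-path exists on \emph{some} such interval. This is exactly the local step: apply Lemma \ref{Unbhd} to $\Gamma_s$ and $\psi$ to get $U_\psi$, then by continuity of $t \mapsto \Gamma_t$ at $s$ and the upgraded singleton statement above, there is an interval $I_s$ open in $[0,1]$ containing $s$ with $|\Gamma_t \cap U_\psi| = 1$ for all $t \in I_s$; define $j(t)$ to be the unique element of $\Gamma_t \cap U_\psi$. Then $j(s) = \psi$ (since $\psi \in \Gamma_s \cap U_\psi$) and $j(t) \in \Gamma_t$. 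Continuity of $j$ on $I_s$ follows because $U_\psi$ can be taken with $\overline{U_\psi}$ carrying no other elements in a limiting sense — more carefully, given $t_n \to t$ in $I_s$, the points $j(t_n)$ lie in the relatively compact $\overline{U_\psi}$, any accumulation point $\eta$ satisfies $\eta \in \Gamma_t$ by condition (1) of Fact \ref{ChabProps} and $\eta \in \overline{U_\psi}$; arranging (as in Lemma \ref{Unbhd}) that $\overline{U_\psi} \cap \Gamma_t$ contains only $j(t)$ for $t$ near $s$ forces $\eta = j(t)$, so $j(t_n) \to j(t)$. Hence $j$ is a $\Gamma$-path through $\psi$ based at $s$ on $I_s$, so $I_s \subseteq I^*$ and we are done.

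\emph{Main obstacle.} The one genuine technical point, used in both halves, is promoting Lemma \ref{Unbhd} from its sequential statement ``$|\Gamma_n \cap U_\psi| = 1$ eventually'' to the continuous statement ``$|\Gamma_t \cap U_\psi| = 1$ for $t$ near the basepoint,'' together with the continuity of $t \mapsto (\text{the unique element})$. I expect to handle this by choosing the neighborhood slightly more carefully — taking nested neighborhoods $U'' \subseteq \overline{U''} \subseteq U'$ with $\overline{U'} \cap \Gamma_{t_0} = \{\chi\}$ so that the compactness/accumulation argument of Lemma \ref{Unbhd} applies verbatim to any sequence $\Gamma_{t_n} \to \Gamma_{t_0}$ — which is exactly the kind of argument already executed in the proof of Lemma \ref{Unbhd}, so no new ideas are needed, only care with the topology of $\text{PSL}_2\C$ near the relevant group element.
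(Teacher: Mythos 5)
Your proposal is correct and follows essentially the same route as the paper: existence via Lemma \ref{Unbhd} to define $j(t)$ as the unique element of $\Gamma_t\cap U_\psi$ for $t$ near $s$, and uniqueness by propagating this local uniqueness (your open-closed argument is just a repackaging of the paper's $\inf$/$\sup$ argument). You also correctly identify and handle the one point the paper leaves implicit, namely upgrading the sequential conclusion of Lemma \ref{Unbhd} to a statement holding for all $t$ in a neighborhood, which works exactly as you describe since the topology is metrizable.
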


\begin{proof}
     First, fix a neighborhood $U_\psi\subseteq\mathrm{PSL}_2\C$ of $\psi$ satisfying the conclusion of Lemma \ref{Unbhd}, with respect to sequences converging to $\Gamma_s$. Observe that for $t$ sufficiently close to $s$, we have $|\Gamma_t\cap U_\psi|=1$. Hence, for some open interval $I'$ containing $s$, we may define a function $j:I'\rightarrow\mathrm{PSL}_2\C$ by setting $j(t)$ to be the unique element in $\Gamma_t\cap U_\psi$. In particular, $j(s)=\psi$. The fact that $j$ is continuous follows from $\Gamma$ being a path in $\mathcal{D}$. Hence, $j$ is a $\Gamma$-path through $\psi$ based at $s$, and observe that $j$ is the uniquely defined such path on this domain. \par

     Now, fix an interval $I\subseteq[0,1]$ containing $s$ and suppose $j_1,j_2:I\rightarrow\mathrm{PSL}_2\C$ are $\Gamma$-paths through $\psi$ based at $s$. Let $t_*=\inf\{t\in I\;|\;j_1(t')=j_2(t')\text{ for all $t'\in[t,s]$}\}$. If $t_*\neq\inf I$, then continuity implies that $j_1(t_*)=j_2(t_*)$, and it follows that $j_1$ and $j_2$ are $\Gamma$-paths through $j_1(t_*)$ based at $t_*$. The preceding paragraph then implies that $j_1$ and $j_2$ must agree near $t_*$, contradicting the definition of $t_*$. Hence, $t_*=\inf I$, and with an analogous argument with a supremum, the uniqueness claim follows.
\end{proof}

Notice that Proposition \ref{pslpath} implies that for each $s\in[0,1]$ and $\psi\in\Gamma_s$, there is an interval $I_\psi^s\subseteq[0,1]$ that is the maximal domain on which we can define a $\Gamma$-path through $\psi$ based at $s$, call it $j_\psi^s:I_\psi^s\rightarrow\mathrm{PSL}_2\C$. For an interval $I\subseteq[0,1]$, we define the \textit{frontier} $\mathrm{Fr}(I)=\overline{I}\backslash\mathrm{int}_{[0,1]}(I)$, where $\mathrm{int}_{[0,1]}(I)$ is the interior of $I$ with respect to the topology on $[0,1]$. Note that $\mathrm{Fr}(I)\subseteq\{\inf I, \sup I\}$.

\begin{lemma}
    \label{Lemma-j-path-props}
    Fix $s\in[0,1]$ and $\psi\in\Gamma_s$.
    \begin{enumerate}
        \item The interval $I_\psi^s$ is open in the topology on $[0,1]$. In particular, if $\inf I_\psi^s\not\in\mathrm{Fr}(I_\psi^s)$, then $\inf I_\psi^s=0$; similarly, if $\sup I_\psi^s\not\in\mathrm{Fr}(I_\psi^s)$, then $\sup I_\psi^s=1$.
        \item As $t\in I_\psi^s$ converges to a point in $\mathrm{Fr}(I_\psi^s)$, $j_\psi^s(t)$ diverges to $\infty$ in $\mathrm{PSL}_2\C$.
    \end{enumerate}
\end{lemma}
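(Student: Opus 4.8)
The plan is to prove Part 1 (openness of $I_\psi^s$) first, since it lets us identify $\text{Fr}(I_\psi^s)$ with $\overline{I_\psi^s}\setminus I_\psi^s$, and then to prove Part 2 by an ``extension past the frontier'' argument that invokes Part 1.

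\emph{Part 1 (openness).} I would fix $t_0\in I_\psi^s$ and show it has a neighborhood open in $[0,1]$ contained in $I_\psi^s$. The restriction of $j_\psi^s$ to a small interval around $t_0$ is a $\Gamma$-path through $j_\psi^s(t_0)$ based at $t_0$, so by the existence part of Proposition \ref{pslpath} there is an interval $I'\ni t_0$ open in $[0,1]$ carrying a $\Gamma$-path $j'$ through $j_\psi^s(t_0)$ based at $t_0$. On $I'\cap I_\psi^s$ both $j'$ and $j_\psi^s$ are $\Gamma$-paths through $j_\psi^s(t_0)$ based at $t_0$, so the uniqueness part of Proposition \ref{pslpath} forces them to agree there, and then $j_\psi^s$ and $j'$ glue to a $\Gamma$-path through $\psi$ based at $s$ on the interval $I_\psi^s\cup I'$; maximality of $I_\psi^s$ gives $I'\subseteq I_\psi^s$, so $t_0$ is interior. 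Hence $I_\psi^s$ is open, $\text{Int}_{[0,1]}(I_\psi^s)=I_\psi^s$, and $\text{Fr}(I_\psi^s)=\overline{I_\psi^s}\setminus I_\psi^s$. Since an interval open in $[0,1]$ contains its infimum only when that infimum equals $0$, the hypothesis $\inf I_\psi^s\notin\text{Fr}(I_\psi^s)$ forces $\inf I_\psi^s\in I_\psi^s$ and hence $\inf I_\psi^s=0$; the statement for the supremum is symmetric.

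\emph{Part 2 (divergence).} Suppose for contradiction that $t_n\in I_\psi^s$, $t_n\to t^*\in\text{Fr}(I_\psi^s)$, and $\{j_\psi^s(t_n)\}$ does not diverge to $\infty$; after passing to a subsequence, $j_\psi^s(t_n)\to\psi^*$ for some $\psi^*\in\text{PSL}_2\C$. Continuity of $\Gamma$ gives $\Gamma_{t_n}\to\Gamma_{t^*}$ geometrically, and since $j_\psi^s(t_n)\in\Gamma_{t_n}$, condition (1) of Fact \ref{ChabProps} gives $\psi^*\in\Gamma_{t^*}$. Applying Proposition \ref{pslpath} at $t^*$, via the construction in its proof, yields an interval $I'\ni t^*$ open in $[0,1]$ and a $\Gamma$-path $j'$ through $\psi^*$ based at $t^*$ with $j'(t)$ equal to the unique point of $\Gamma_t\cap U_{\psi^*}$, for a Lemma \ref{Unbhd} neighborhood $U_{\psi^*}$ of $\psi^*$. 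For $n$ large, $t_n\in I'\cap I_\psi^s$ and $j_\psi^s(t_n)\in U_{\psi^*}$, so $j_\psi^s(t_n)=j'(t_n)$ (both being the unique point of $\Gamma_{t_n}\cap U_{\psi^*}$); by uniqueness in Proposition \ref{pslpath} this equality propagates to all of $I'\cap I_\psi^s$. Gluing $j_\psi^s$ and $j'$ then gives a $\Gamma$-path through $\psi$ based at $s$ on $I_\psi^s\cup I'$, an interval strictly containing $I_\psi^s$ because $t^*\in I'\setminus I_\psi^s$, contradicting maximality. Hence $j_\psi^s(t)\to\infty$ as $t\to t^*$ within $I_\psi^s$.

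The step I expect to be the main obstacle is the matching of $j'$ with $j_\psi^s$ in Part 2: one only controls $j_\psi^s$ along the subsequence $\{t_n\}$, not as a genuine one-sided limit at $t^*$, so $j_\psi^s(t)\to\psi^*$ is not directly available. The resolution is to route the subsequential convergence through the Lemma \ref{Unbhd} neighborhood $U_{\psi^*}$ built into $j'$, which pins $j_\psi^s(t_n)=j'(t_n)$ for large $n$, and then to use the uniqueness of $\Gamma$-paths to upgrade agreement at the discrete times $t_n$ to agreement on the whole overlap interval $I'\cap I_\psi^s$, which is exactly what the gluing, and hence the contradiction with maximality, requires. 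Everything else is routine bookkeeping with intervals and with the two convergence conditions of Fact \ref{ChabProps}.
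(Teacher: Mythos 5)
Your proof is correct and follows essentially the same route as the paper's: openness of $I_\psi^s$ via local existence plus uniqueness of $\Gamma$-paths (Proposition \ref{pslpath}), and divergence at the frontier by extracting a subsequential limit $\psi^*\in\Gamma_{t^*}$ via Fact \ref{ChabProps}, pinning $j_\psi^s$ inside the Lemma \ref{Unbhd} neighborhood of $\psi^*$, and contradicting maximality. Your explicit treatment of upgrading subsequential convergence to agreement with the local path $j'$ on the whole overlap is exactly the point the paper compresses into ``one can take the neighborhood to be arbitrarily small,'' so no gap remains.
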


\begin{proof}
    The first claim follows immediately from Proposition \ref{pslpath}, which tells us that $I_\psi^s$ is a union of intervals containing $s$ that are open in $[0,1]$.\par
    
    For the second claim, fix $t'\in\mathrm{Fr}(I_\psi^s)$ and suppose $j_\psi^s(t)$ does not diverge to $\infty$ as $t\in I_\psi^s$ converges to $t'$. Then, as $t$ converges to $t'$, $j_\psi^s(t)$ accumulates on some $\varphi\in\mathrm{PSL}_2\C$. By the first condition for convergence in the geometric topology on $\mathcal{D}$, we must then have $\varphi\in\Gamma_{t'}$. Let $U_\varphi\subseteq\mathrm{PSL}_2\C$ be the neighborhood of $\varphi$ given by Lemma \ref{Unbhd} with respect to $\Gamma_{t'}$, and note that it follows that $j_\psi^s(t)\in U_\varphi$ for $t$ sufficiently close to $t'$. The proof of Lemma \ref{Unbhd} shows that one can take the neighborhood $U_\varphi$ to be arbitrarily small, so we see that $j_\psi^s(t)$ converges to $\varphi\in\Gamma_{t'}$ as $t$ approaches $t'$. Hence, the maximality of $I_\psi^s$ implies $t'\in I_\psi^s$, which contradicts that $I_\psi^s$ is open.
\end{proof}

Let $\overline{\mathrm{PSL}_2\C}=\mathrm{PSL}_2\C\cup\{\infty\}$ denote the one-point compactification of $\mathrm{PSL}_2\C$. For $s,t\in[0,1]$, we now define a map $J_{s,t}:\Gamma_s\rightarrow\overline{\mathrm{PSL}_2\C}$ by

\begin{equation}
\label{eqn-JDef}
    J_{s,t}(\psi)=
\begin{cases} 
      j_\psi^s(t), & t\in I_\psi^s \\
      \infty, & \text{else}.
   \end{cases}
\end{equation}

For each $s\in[0,1]$ and $\psi\in\Gamma_s$, the map $t\mapsto J_{s,t}(\psi)=j_\psi^s(t)$ on $I_\psi^s$ describes a path in $\mathrm{PSL}_2\C$, which diverges to infinity at any points of $\mathrm{Fr}(I_\psi^s)$. Outside of $I_\psi^s$, $J_{s,t}(\psi)$ is constant at the $\infty\in\overline{\mathrm{PSL}_2\C}$. We will record some facts about each map $J_{s,t}$, whose image is contained in $\Gamma_t\cup\infty$.

\begin{lemma}
\label{Lemma-JFacts}
    Fix $s\in[0,1]$.
    \begin{enumerate}
        \item The family of functions $J_{s,t}$ varies continuously in $t$ with respect to the compact-open topology.
        \item If $t,t'\in I_\psi^s$, then $(J_{t,t'}\circ J_{s,t})(\psi)=J_{s,t'}(\psi)$.
    \end{enumerate}
\end{lemma}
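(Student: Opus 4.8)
\textbf{Proof proposal for Lemma \ref{Lemma-JFacts}.}

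The plan is to establish the two claims separately, both essentially by unwinding the definition of $J_{s,t}$ in terms of the maximal $\Gamma$-paths $j_\psi^s$ and the behavior recorded in Lemma \ref{Lemma-j-path-props}. For part (1), fix $t_0 \in [0,1]$; I want to show that $t \mapsto J_{s,t}$ is continuous at $t_0$ into the space of maps $\Gamma_s \to \overline{\text{PSL}_2\C}$ with the compact-open topology. Since $\Gamma_s$ is countable (hence, with the discrete topology, $\sigma$-compact with compact sets being finite), the compact-open topology here is just the topology of pointwise convergence, so it suffices to check that for each fixed $\psi \in \Gamma_s$ the map $t \mapsto J_{s,t}(\psi) \in \overline{\text{PSL}_2\C}$ is continuous at $t_0$. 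There are two cases. If $t_0 \in \text{Int}_{[0,1]}(I_\psi^s)$, then near $t_0$ we have $J_{s,t}(\psi) = j_\psi^s(t)$, which is continuous since $j_\psi^s$ is a path. If $t_0 \notin \text{Int}_{[0,1]}(I_\psi^s)$, then $J_{s,t_0}(\psi) = \infty$ (either $t_0 \in \text{Fr}(I_\psi^s)$ or $t_0$ lies outside $\overline{I_\psi^s}$); in the first sub-case Lemma \ref{Lemma-j-path-props}(2) gives $j_\psi^s(t) \to \infty$ as $t \to t_0$ from within $I_\psi^s$, and from outside $\overline{I_\psi^s}$ the value is the constant $\infty$, so $J_{s,t}(\psi) \to \infty = J_{s,t_0}(\psi)$; in the second sub-case $J_{s,t}(\psi)$ is identically $\infty$ near $t_0$. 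Either way we get continuity at $t_0$, proving (1).

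For part (2), suppose $t, t' \in I_\psi^s$, and set $\eta = J_{s,t}(\psi) = j_\psi^s(t) \in \Gamma_t$. The claim is that $J_{t,t'}(\eta) = j_\psi^s(t')$. The idea is that the restriction of $j_\psi^s$ to a suitable subinterval is a $\Gamma$-path through $\eta$ based at $t$, and then invoke the uniqueness half of Proposition \ref{pslpath}. Concretely, since $I_\psi^s$ is an interval (open in $[0,1]$) containing $s$, $t$, and $t'$, it contains the closed interval $[\min(t,t'), \max(t,t')]$, and in fact it contains an open interval $I''$ (open in $[0,1]$) containing both $t$ and $t'$. The restriction $j_\psi^s|_{I''} : I'' \to \text{PSL}_2\C$ satisfies $j_\psi^s(r) \in \Gamma_r$ for all $r \in I''$ and $j_\psi^s(t) = \eta$, so it is a $\Gamma$-path through $\eta$ based at $t$. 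Hence $I'' \subseteq I_\eta^t$, and by the uniqueness statement of Proposition \ref{pslpath}, $j_\eta^t$ agrees with $j_\psi^s$ on $I''$; in particular $t' \in I_\eta^t$ and $J_{t,t'}(\eta) = j_\eta^t(t') = j_\psi^s(t') = J_{s,t'}(\psi)$, as desired.

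The main point requiring care — the part I expect to be the only real obstacle — is the reduction in (1) from the compact-open topology to pointwise convergence: one must note that $\Gamma_s$ carries the discrete topology (it is a discrete subgroup, countable), so its compact subsets are exactly the finite subsets, and uniform convergence on finite sets is pointwise convergence. Beyond that, everything is a direct appeal to Lemma \ref{Lemma-j-path-props} for the divergence-to-$\infty$ behavior at the frontier and to Proposition \ref{pslpath} for local existence and uniqueness of $\Gamma$-paths; no genuinely new estimates are needed. One small check worth spelling out in the write-up of (2) is that an open-in-$[0,1]$ interval containing $s$, $t$, $t'$ indeed contains an open-in-$[0,1]$ interval containing just $t$ and $t'$ — immediate since $I_\psi^s$ is itself such an interval and $t, t' \in I_\psi^s$, so one may simply take $I'' = I_\psi^s$.
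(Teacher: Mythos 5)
Your proposal is correct and follows essentially the same route as the paper: part (1) reduces the compact-open topology to pointwise continuity using discreteness of $\Gamma_s$ and then invokes Lemma \ref{Lemma-j-path-props} for the divergence to $\infty$ at the frontier of $I_\psi^s$, and part (2) observes that the restriction of $j_\psi^s$ is a $\Gamma$-path through $j_\psi^s(t)$ based at $t$ and applies the uniqueness statement of Proposition \ref{pslpath}. The only difference is that you spell out the case analysis and the reduction to pointwise convergence in more detail than the paper does.
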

\begin{proof}
    For the first claim, it suffices to show that for all $\psi\in\Gamma_s$, $t\mapsto J_{s,t}(\psi)$ is a path in $\overline{\mathrm{PSL}_2\C}$, since $\Gamma_s$ is discrete. By definition, $t\mapsto J_{s,t}(\psi)$ agrees with the path $j_\psi^s$ on $I_\psi^s$. Hence, it therefore remains to observe that Lemma \ref{Lemma-j-path-props} implies that as $t$ converges to a point in the frontier of $I_\psi^s$, $J_{s,t}(\psi)$ converges to $\infty$ in $\overline{\mathrm{PSL}_2\C}$.\par
    
    For the second claim, observe that as $t''$ ranges between $t$ and $t'$, $t''\mapsto j_\psi^s(t'')$ describes a $\Gamma$-path through $j_\psi^s(t)$ based at $t$, so the uniqueness of such paths given by Proposition \ref{pslpath} gives the desired claim.
\end{proof}

For a Kleinian group $\Gamma'$, we will say that a map $J:\Gamma'\rightarrow\overline{\mathrm{PSL}_2\C}$ is \textbf{finite valued} if the image $J(\Gamma')$ is contained in $\mathrm{PSL}_2\C$. Understanding when the maps $J_{s,t}$ are finite valued will be an important tool for analyzing the path $t\mapsto\Gamma_t$. In particular, we will see that if $J_{s,t}$ is finite valued for all $t\in I\subseteq[0,1]$, then $J_{s,t}$ is an injective homomorphism for all $t\in I$, so the map $t\mapsto J_{s,t}\in S(\Gamma_s)$ is a path in the topology of strong convergence on the space $S(\Gamma_s)$ of discrete and faithful representations $\Gamma_s\rightarrow\mathrm{PSL}_2\C$.

\begin{prop}
\label{Prop-InjConds}
    For any $s,t\in[0,1]$ and any subgroup $H\leq\Gamma_s$, the following are equivalent:
    \begin{enumerate}
        \item $J_{s,t}|_H$ is finite valued
        \item $J_{s,t}|_H$ is an injective homomorphism
        \item $t\in I_\psi^s$ for all $\psi\in H$.
    \end{enumerate}
\end{prop}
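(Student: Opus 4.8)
The plan is to prove the cycle of implications $(3) \Rightarrow (2) \Rightarrow (1) \Rightarrow (3)$. The implication $(2) \Rightarrow (1)$ is immediate from the definition of finite valued, since an injective homomorphism into $\text{PSL}_2\C$ has image in $\text{PSL}_2\C$. Likewise $(1) \Rightarrow (3)$ is essentially a tautology after unwinding definitions: if $J_{s,t}|_H$ is finite valued, then $J_{s,t}(\psi) \neq \infty$ for every $\psi \in H$, and by the definition \eqref{eqn-JDef} of $J_{s,t}$ this says precisely that $t \in I_\psi^s$ for all $\psi \in H$. So the only implication requiring real work is $(3) \Rightarrow (2)$.

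For $(3) \Rightarrow (2)$, assume $t \in I_\psi^s$ for all $\psi \in H$. First I would establish that $J_{s,t}|_H$ is a homomorphism. Fix $\psi, \phi \in H$. The key point is that the three $\Gamma$-paths $u \mapsto j_\psi^s(u)$, $u \mapsto j_\phi^s(u)$, and $u \mapsto j_\psi^s(u) j_\phi^s(u)$ are all defined on the connected component of $I_\psi^s \cap I_\phi^s$ containing $s$ (an interval containing $s$ and open in $[0,1]$, by Lemma \ref{Lemma-j-path-props}). The first two land in $\Gamma_u$ by definition, hence so does their product, so $u \mapsto j_\psi^s(u) j_\phi^s(u)$ is a $\Gamma$-path through $\psi\phi$ based at $s$; it agrees with $j_{\psi\phi}^s$ on their common domain by the uniqueness half of Proposition \ref{pslpath}. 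Now there is one subtlety I need to address: a priori $t$ might lie in $I_\psi^s$ and $I_\phi^s$ and $I_{\psi\phi}^s$ but in different connected components — except these intervals all contain $s$ and are intervals, so $I_\psi^s \cap I_\phi^s$ is itself an interval containing $s$, and $t$ lies in it. The product path extends $j_{\psi\phi}^s$ over this interval, and by maximality $j_{\psi\phi}^s(t) = j_\psi^s(t)\, j_\phi^s(t)$, i.e. $J_{s,t}(\psi\phi) = J_{s,t}(\psi) J_{s,t}(\phi)$. Applying this to $\psi$ and $\psi^{-1}$ (noting $I_{\psi^{-1}}^s = I_\psi^s$ by the same uniqueness argument applied to the inverse path) shows $J_{s,t}|_H$ sends the identity to the identity and inverts, so it is a homomorphism.

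It remains to show $J_{s,t}|_H$ is injective; equivalently, $\ker(J_{s,t}|_H)$ is trivial. Suppose $\psi \in H$ with $J_{s,t}(\psi) = \mathds{1}$. Consider the set $A = \{u \in I_\psi^s : j_\psi^s(u) = \mathds{1}\}$. It contains $t$ and is closed in $I_\psi^s$ by continuity. I claim it is also open: if $j_\psi^s(u_0) = \mathds{1}$, then $j_\psi^s$ restricted near $u_0$ is a $\Gamma$-path through $\mathds{1}$ based at $u_0$, and the constant path $\mathds{1}$ is too (since $\mathds{1} \in \Gamma_u$ for all $u$), so by the uniqueness in Proposition \ref{pslpath} they agree near $u_0$. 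Since $I_\psi^s$ is connected, $A = I_\psi^s$; in particular $s \in A$, so $\psi = j_\psi^s(s) = \mathds{1}$. Hence the kernel is trivial and $J_{s,t}|_H$ is an injective homomorphism.

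I expect the main obstacle to be purely bookkeeping: making sure that the relevant maximal intervals $I_\psi^s$, $I_\phi^s$, $I_{\psi\phi}^s$, $I_{\psi^{-1}}^s$ all overlap in a single interval containing both $s$ and $t$, so that the uniqueness statement of Proposition \ref{pslpath} can be invoked on a genuine common domain — this is where one must be careful not to conflate "$t \in I_\psi^s$ and $t \in I_\phi^s$" with "$t$ lies in the common interval on which both $\Gamma$-paths are simultaneously defined." Since all these intervals are intervals containing $s$, their pairwise intersections are intervals containing $s$, which resolves the issue, but it should be stated explicitly.
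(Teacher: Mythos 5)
Your proposal is correct and follows essentially the same route as the paper: the two easy implications are dispatched by unwinding definitions, and $(3)\Rightarrow(2)$ is proved by recognizing the pointwise product of $j_\psi^s$ and $j_\phi^s$ as a $\Gamma$-path through $\psi\phi$ and invoking the uniqueness in Proposition \ref{pslpath}, with injectivity coming from the fact that the constant path at $\mathds{1}$ is the unique $\Gamma$-path through $\mathds{1}$. The only cosmetic difference is that for injectivity the paper applies the global uniqueness statement directly (viewing $j_\psi^s$ as a $\Gamma$-path through $\mathds{1}$ based at $t$), whereas you re-run the open--closed connectivity argument underlying that uniqueness.
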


\begin{proof}
    
    ($3\Rightarrow 2$): Let $I_*=\bigcap_{\psi\in H}I_\psi^s$, and fix $\psi,\varphi\in H$. Notice that if $t\in I_*$, then $t\mapsto J_{s,t}(\psi)\cdot J_{s,t}(\varphi)$ is a $\Gamma$-path through $\psi\varphi$ based at $s$, so the uniqueness statement in Proposition \ref{pslpath} implies that $J_{s,t}(\psi)\cdot J_{s,t}(\varphi)=J_{s,t}(\psi\varphi)$ for all $t\in I_*$. Thus, $J_{s,t}|_H$ is a homomorphism for all $t\in I_*$.\par

    Now, fix $t\in I_*$ and suppose that $J_{s,t}(\psi)=\mathds{1}$ for some $\psi\in H$, and observe that the path $j_\psi^s$ is a $\Gamma$-path through $\mathds{1}$ based at $t$. The constant path at $\mathds{1}$ is the unique $\Gamma$-path through $\mathds{1}$ based at $t$, so we must have that $\psi=\mathds{1}$, and the injectivity of $J_{s,t}$ follows.\par

    The implications ($2\Rightarrow 1$) and ($1\Rightarrow 3$) are immediate from the definitions.
\end{proof}

Informally, the previous proposition says that $J_{s,t}|_H$ will be an injective homomorphism if and only if as $t'$ ranges from $s$ to $t$, the path $t'\mapsto J_{s,t'}(\psi)$ does not diverge to $\infty$ for any $\psi\in H$. Additionally, we see that for any distinct $\psi,\varphi\in\Gamma_s$, the paths $t\mapsto J_{s,t}(\psi)$ and $t\mapsto J_{s,t}(\varphi)$ do not intersect on $I_\psi^s\cup I_\varphi^s$. \par

\begin{cor}
    \label{injhomo}
    For any $s\in[0,1]$ and finitely generated subgroup $H\leq\Gamma_s$, the restriction $J_{s,t}|_{H}$ is an injective homomorphism for $t\in[0,1]$ sufficiently close to $s$.
\end{cor}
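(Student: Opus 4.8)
The plan is to deduce this corollary directly from Proposition \ref{Prop-InjConds} together with Proposition \ref{pslpath}. By Proposition \ref{Prop-InjConds} (equivalence of (2) and (3)), it suffices to find an interval around $s$ on which every generator's $\Gamma$-path stays finite, i.e. to show that $\bigcap_{\psi \in H} I_\psi^s$ contains an open neighborhood of $s$ in $[0,1]$.

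First I would fix a finite generating set $\psi_1, \dots, \psi_n$ for $H$. By Proposition \ref{pslpath}, for each $i$ there is an interval open in $[0,1]$ that is the domain of a $\Gamma$-path through $\psi_i$ based at $s$; equivalently, $s \in \mathrm{Int}_{[0,1]}(I_{\psi_i}^s)$ for each $i$, since $I_{\psi_i}^s$ is open in $[0,1]$ by Lemma \ref{Lemma-j-path-props}(1). Then $U := \bigcap_{i=1}^n \mathrm{Int}_{[0,1]}(I_{\psi_i}^s)$ is a finite intersection of sets open in $[0,1]$ containing $s$, hence is itself open in $[0,1]$ and contains $s$.

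Next I would check that $U \subseteq I_\psi^s$ for \emph{every} $\psi \in H$, not just the generators. Here is where the homomorphism structure enters: for $t \in U$ one has $t \in I_{\psi_i}^s$ for all $i$, and the argument in the proof of Proposition \ref{Prop-InjConds} (the implication $3 \Rightarrow 2$, applied to the subgroup generated by the relevant elements, or just observing that products and inverses of $\Gamma$-paths through elements based at $s$ are again $\Gamma$-paths, by uniqueness in Proposition \ref{pslpath}) shows that $J_{s,t}$ is defined and multiplicative on all words in the $\psi_i$. Concretely, writing $\psi \in H$ as a word in $\psi_1^{\pm 1}, \dots, \psi_n^{\pm 1}$, the product of the corresponding paths $t \mapsto J_{s,t}(\psi_i)^{\pm 1}$ is a $\Gamma$-path through $\psi$ based at $s$ with domain containing $U$, so $U \subseteq I_\psi^s$. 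Therefore $U \subseteq \bigcap_{\psi \in H} I_\psi^s$, and condition (3) of Proposition \ref{Prop-InjConds} holds for all $t \in U$, so $J_{s,t}|_H$ is an injective homomorphism for all $t \in U$, which is an open neighborhood of $s$.

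The only mild subtlety — and the one place to be careful — is making sure the "product of paths is a $\Gamma$-path" step is legitimate for arbitrary words rather than just single products: one needs $J_{s,t}(\psi_i) \in \Gamma_t$ for $t \in U$ (immediate from $t \in I_{\psi_i}^s$) so that the word evaluated at these group elements lands in $\Gamma_t$, and one needs continuity, which is inherited from each factor. This is exactly the content already extracted in the proof of Proposition \ref{Prop-InjConds}, so no new work is required; the corollary is essentially the observation that finite generation lets a finite intersection of open neighborhoods do the job.
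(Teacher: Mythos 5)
Your proposal is correct and follows essentially the same route as the paper: intersect the finitely many open intervals $I_{\varphi_i}^s$ for a generating set, observe that $I_{\psi_1}^s\cap I_{\psi_2}^s\subseteq I_{\psi_1\psi_2}^s$ (products and inverses of $\Gamma$-paths are $\Gamma$-paths by the uniqueness in Proposition \ref{pslpath}), and conclude via Proposition \ref{Prop-InjConds}($3\Rightarrow2$). Your extra care with inverses and arbitrary words is a slightly more explicit rendering of the same step the paper takes.
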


\begin{proof}
    Let $\{\varphi_1,...,\varphi_k\}$ be a finite set of generators for $H$ and set $I_*=\cap_{i=1}^k I_{\varphi_i}^s$. Lemma \ref{Lemma-j-path-props}(1) tells us each $I_{\varphi_i}^s$ is open in $[0,1]$ and contains $s$, so $I_*$ must be open in $[0,1]$ and contain $s$. Note that if $\psi_1,\psi_2\in \Gamma_s$, then $I_{\psi_1}^s\cap I_{\psi_2}^s\subseteq I_{\psi_1\psi_2}^s$, so $I_*\subseteq I_\psi^s$ for all $\psi\in H$. Thus, the claim follows from Proposition \ref{Prop-InjConds}($3\Rightarrow 2$).
\end{proof}

We will now briefly contrast this result, Corollary \ref{injhomo}, on paths in $\mathcal{D}$ with the analogous situation for convergent sequences in $\mathcal{D}$, focusing on injectivity. A classical example from J{\o}rgensen and Marden \cite{JORMAR} of conformal Dehn surgery constructs a sequence of infinite cyclic groups $\langle\psi_i\rangle$ generated by a hyperbolic converging geometrically in $\mathcal{D}$ to a rank-$2$ abelian parabolic group $\langle a,b\rangle$. The second condition for geometric convergence as given in Fact \ref{ChabProps} tells us that there exist sequences $n_i,m_i\in\Z$ such that $\psi_i^{n_i}\rightarrow a$ and $\psi_i^{m_i}\rightarrow b$ in $\mathrm{PSL}_2\C$, as $i\rightarrow\infty$. Following the framework used above to create the injective homomorphisms $J_{s,t}$ in the setting of paths, one might hope to define an injective homomorphism $J_i:\langle a,b\rangle\rightarrow\mathrm{PSL}_2\C$ for $i$ sufficiently large by setting $J_i(a)=\psi_i^{n_i}$ and $J_i(b)=\psi_i^{m_i}$. Such a map $J_i$ is not, of course, an injective homomorphism.


\subsection{Path Components}
\label{subsec-pathcomps}

In this subsection we will show that $\mathcal{H}_\infty$ is not path connected. This contrasts with Theorem \ref{Thm-connectedcomponents}, in which we determined that $\mathcal{H}_\infty$ is connected. We will describe an infinite family of hyperbolic $3$-manifolds, which we will call symmetric infinite type $(G,N)$-glued hyperbolic $3$-manifolds, such that for each symmetric infinite type $(G,N)$-glued hyperbolic $3$-manifold $M$, the leaf \[\ell(M)=\{(M,f)\in\mathcal{H}\;|\;f\in\mathcal{F}M\}\] is a path component of $\mathcal{H}_\infty$.\par

\textbf{Construction of $(G,N)$-glued hyperbolic $3$-manifolds:} Let $N$ be a compact, oriented, connected, irreducible, atoroidal, acylindrical $3$-manifold where each of the $2n>0$ components of $\partial N$ is incompressible and has negative Euler characteristic. Fix an orientation reversing homeomorphism $\tau:\partial N\rightarrow\partial N$ such that $\tau^2=\mathrm{id}$ and $\tau$ preserves no component of $\partial N$. Observe that $\tau$ partitions the boundary components of $N$ into $n$ pairs; accordingly, assign each component of $\partial N$ a unique label, either $S_j^-$ or $S_j^+$ for $1\leq j\leq n$, so that each $\tau(S_j^-)=S_j^+$. \par

Let $G$ be a $2n$-regular directed connected graph with a labeling of the edges by $n$ colors, call the colors $\{1,...,n\}$, so that each vertex is incident to exactly one incoming edge and one outgoing edge of each color. Associate to each vertex of $G$ a copy of $N$, and suppose that $v_1,v_2\in V(G)$ are two vertices of $G$ joined by an edge labeled $j$, directed from $v_1$ to $v_2$. Then, glue the copy of $S_j^-$ in the $v_1$ copy of $N$ to the copy of $S_j^+$ in the $v_2$ copy of $N$ according to $\tau$. This gluing is depicted in Figure \ref{Figure-G-Glued} for $j=1$. Let $M$ be the manifold resulting from performing the analogous gluing at each edge of $G$. It follows from a result of Souto and Stover \cite{SOUSTO} that $M$ admits a complete hyperbolic metric. For any hyperbolic metric on $M$, we call $M$ a $(G,N)$\textbf{-glued hyperbolic }$3$\textbf{-manifold}.$\qed$

\begin{figure}
    \begin{center}
        \includegraphics[scale=0.21]{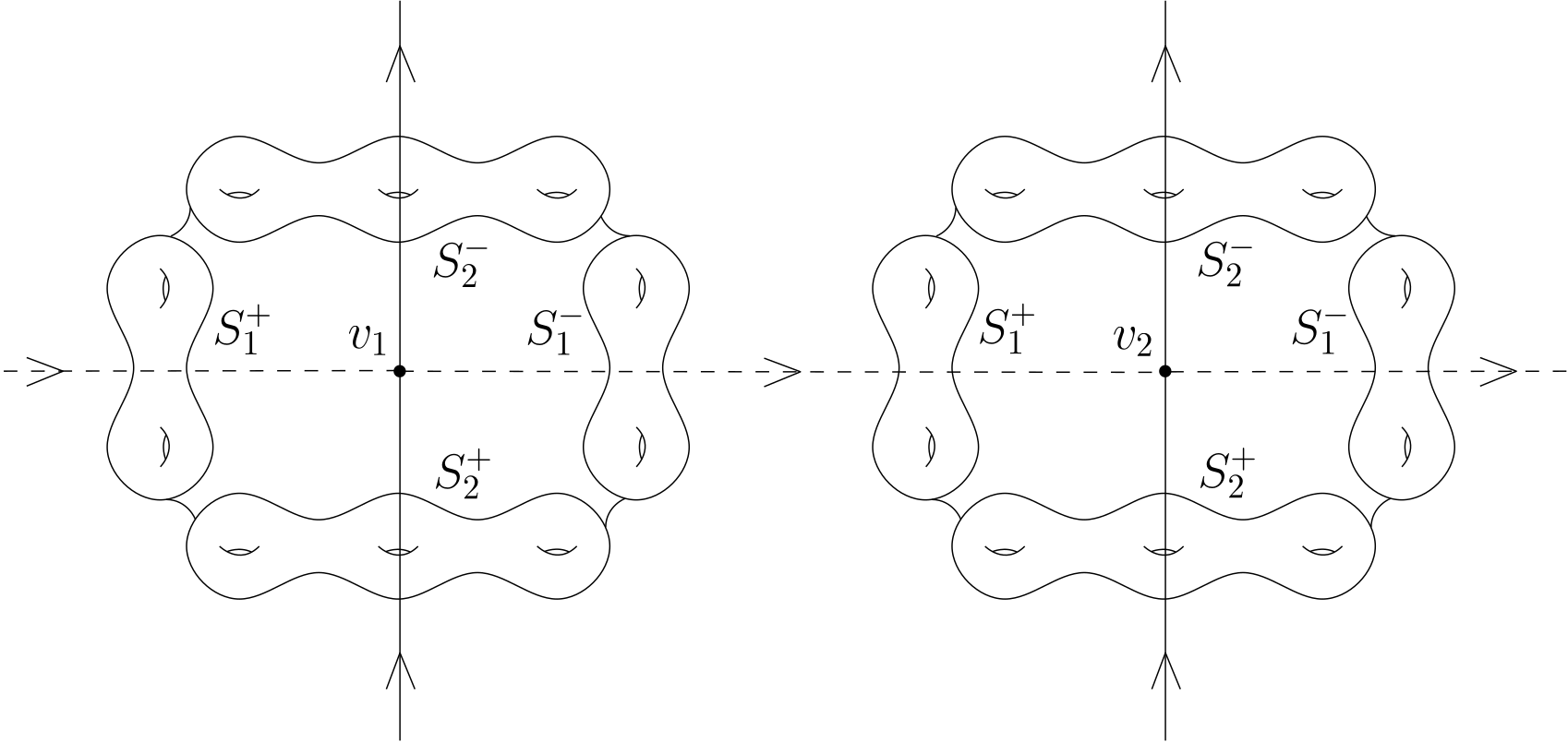}
    \end{center}
    \captionsetup{width=.8\linewidth}
    \caption{This depicts a portion of the construction of a $(G,N)$-glued hyperbolic $3$-manifold, with $G$ overlaying the copies of $N$. The edges of $G$ labeled with a $1$ are depicted as dashed and those labeled with a $2$ are depicted solid.}
    \label{Figure-G-Glued}
\end{figure}

\vspace{0.05in}

For $M$ a $(G,N)$-glued hyperbolic $3$-manifold as above, we let $\mathrm{Aut}(G)$ denote the set of graph automorphisms of $G$ that preserve edge colorings and orientations. Each $\varphi\in\mathrm{Aut}(G)$ yields an orientation preserving homeomorphism of $M$ by mapping the copy of $N$ corresponding to each vertex $v\in V(G)$ to the copy of $N$ corresponding to $\varphi(v)$; this map is a well defined homeomorphism, since $\varphi$ preserves the gluing data, so we obtain an injective homomorphism $\mathrm{Aut}(G)\hookrightarrow\mathrm{Homeo}^+(M)$ into the group of orientation preserving self homeomorphisms. We say that $M$ is \textbf{symmetric} if $\mathrm{Aut}(G)$ acts transitively on the vertex set $V(G)$, and that $M$ is \textbf{infinite type} if $|V(G)|=\infty$.\par

Note that the topological conditions on $N$ are quite general. Work of Thurston (see \cite{THUI}, \cite{KAP}) and McMullen \cite{MCM} implies that these topological conditions are jointly equivalent to $N$ admitting a hyperbolic metric with totally geodesic boundary, an even number of boundary components, and no rank-$2$ cusps. Additionally, note that Thurston's work also implies that the compact manifold $N/\tau$ obtained by gluing the boundary components of $N$ as determined by the involution $\tau$ is hyperbolizable, and Mostow rigidity implies that this hyperbolic structure is unique. \par 

There are also many examples of infinite oriented colored graphs $G$ where $\mathrm{Aut}(G)$ acts transitively on $V(G)$, for example an appropriately colored and oriented $2n$-regular tree or $n$-dimensional lattice graph.\par

In the case where $G$ is a finite graph, $M$ is compact. For such $M$, Mostow rigidity implies that $M$ has a unique hyperbolic structure and we recall from Corollary \ref{leaf-pathcomp} and Theorem \ref{Thm-connectedcomponents} that $\ell(M)$ is a path component of $\mathcal{H}$. The main results of the section are that these statements also hold in the case that $M$ is symmetric and infinite type, in which case $M$ must have infinite volume and be non-tame.\par 

\begin{theorem}
    \label{Thm-GGluePathComp}
    For any symmetric infinite type $(G,N)$-glued hyperbolic $3$-manifold $M$, the leaf $\ell(M)$ is a path component of $\mathcal{H}_\infty$.
\end{theorem}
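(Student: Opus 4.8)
The plan is to show that $\ell(M)$ contains the path component of each of its points; since $\ell(M)$ is path connected by Corollary~\ref{leaf-pathcomp} and lies in $\mathcal{H}_\infty$ (as $\text{vol}(M)=\infty$), this yields the theorem. So fix a path $\Gamma:[0,1]\to\mathcal{D}$ with $\mathbb{H}^3/\Gamma_0$ isometric to $M$, where $\Gamma_0=\Gamma(0)$, and let $J_{0,t}:\Gamma_0\to\overline{\text{PSL}_2\C}$ be the maps of Section~\ref{subsec-PathConstraints}. First I would exhaust $M$ by an increasing sequence $X_0\subseteq X_1\subseteq\cdots$ of connected finite unions of copies of $N$ (coming from an exhaustion of $G$ by finite connected subgraphs), so that each $X_n$ is compact and acylindrical with incompressible boundary of negative Euler characteristic and the inclusions $X_n\hookrightarrow M$ are $\pi_1$-injective; fixing compatible basepoints, this produces an exhausting chain $H_0\leq H_1\leq\cdots$ of finitely generated non-elementary subgroups of $\Gamma_0$ with $\bigcup_n H_n=\Gamma_0$.

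The heart of the argument is to prove that $J_{0,t}|_{H_n}$ is finite valued for all $n$ and all $t$, equivalently (by Proposition~\ref{Prop-InjConds}) that $I_\psi^0=[0,1]$ for every $\psi\in H_n$. Suppose not. Since $H_n$ is finitely generated, $I_*^{(n)}:=\bigcap_{\psi\in H_n}I_\psi^0$ is open in $[0,1]$ and contains $0$ by Corollary~\ref{injhomo}; choose the least $n$ with $I_*^{(n)}\neq[0,1]$ and a frontier point $t^*$ of $I_*^{(n)}$. By Lemma~\ref{Lemma-j-path-props}(2) some $\varphi\in H_n$ has $J_{0,t}(\varphi)\to\infty$ as $t\to t^*$, while $t\mapsto[J_{0,t}|_{H_n}]$ is a continuous path in $AH(H_n)$ on the side of $t^*$ lying in $I_*^{(n)}$, and by Lemma~\ref{Lemma-SubgroupConvg} the groups $J_{0,t}(H_n)$ converge geometrically, as $t\to t^*$, to the proper subgroup $G_\infty:=J_{0,t^*}(H_n)\cap\text{PSL}_2\C$. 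Minimality of $n$ forces $G_\infty$ to contain the non-elementary image of $H_{n-1}$, hence $G_\infty$ is non-elementary. Since $X_n$ is acylindrical, Thurston's theorem \cite{THUI} that $AH$ of an acylindrical $3$-manifold group is compact supplies a sequence $t_k\to t^*$ with $[J_{0,t_k}|_{H_n}]\to[\sigma]$ for some $\sigma\in D(H_n)$; choose $c_k\in\text{PSL}_2\C$ with $\psi\mapsto c_k J_{0,t_k}(\psi) c_k^{-1}$ converging to $\sigma$ pointwise on $H_n$. If $\{c_k\}$ were bounded, then along a subsequence $J_{0,t_k}$ would converge in $D(H_n)$, contradicting $J_{0,t_k}(\varphi)\to\infty$; hence $c_k\to\infty$, and then a Cartan decomposition of the $c_k$ together with the boundedness of $J_{0,t_k}(\psi)$ for $\psi$ in the preimage of $G_\infty$ forces all $\sigma(\psi)$, with $\psi$ ranging over a non-elementary subgroup of $H_n$, to share a common fixed point on $S^2_\infty$ — contradicting that $\sigma$ is discrete and faithful. (A short separate variant handles the exceptional case where $n$ is the very first index of the exhaustion: there, if the surviving subgroup were trivial or parabolic one instead argues that $\text{sys}(J_{0,t_k}(H_n))\to\infty$, which is incompatible with algebraic convergence of $c_kJ_{0,t_k}(\cdot)c_k^{-1}$ to a non-elementary $\sigma$.) This proves the claim.

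With the claim in hand, $I_\psi^0=[0,1]$ for every $\psi\in\Gamma_0$, so Proposition~\ref{Prop-InjConds} shows $J_{0,t}:\Gamma_0\to\text{PSL}_2\C$ is an injective homomorphism for all $t$, and Lemma~\ref{Lemma-SubgroupConvg} gives $J_{0,t}(\Gamma_0)\leq\Gamma_t$. Then $\mathbb{H}^3/J_{0,t}(\Gamma_0)$ is a hyperbolic $3$-manifold with holonomy group $\Gamma_0$ covering $\mathbb{H}^3/\Gamma_t$; since the finite pieces are rigid and $M$ carries a unique hyperbolic structure by Proposition~\ref{Prop-GGluedUnique}, this manifold is isometric to $M$ by an isometry conjugating holonomies, so $J_{0,t}(\Gamma_0)=g_t\Gamma_0 g_t^{-1}$ for some $g_t\in\text{PSL}_2\C$. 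Thus $\Gamma_t$ contains a conjugate of $\Gamma_0$, and Proposition~\ref{Thm-DiscCountable} on discrete extensions of Kleinian groups then forces $\Gamma_t=g_t\Gamma_0 g_t^{-1}$, whence $\mathbb{H}^3/\Gamma_t$ is isometric to $M$ and $\Gamma(t)\in\ell(M)$, as desired.

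The main obstacle is the middle step: ruling out that some finitely generated piece $H_n$ sends an element off to $\infty$ along the path. This is exactly where acylindricity of the pieces $X_n$, and through it Thurston's compactness of $AH$, become indispensable, and the delicate point inside it is verifying that the geometric limit of the surviving subgroup is non-elementary — this is what excludes unbounded conjugators and is the reason for passing to the least index $n$ in the exhaustion.
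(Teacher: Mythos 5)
Your overall architecture matches the paper's: exhaust $M$ by acylindrical pieces $X_n$ with associated subgroups $H_n$, use Thurston's compactness of $AH(\text{acylindrical})$ to rule out elements of $H_n$ diverging along the path, upgrade the resulting injectivity of every $J_{0,t}$ to a conjugacy $J_{0,t}(\Gamma_0)=g_t\Gamma_0 g_t^{-1}$ via the rigidity in Proposition \ref{Prop-GGluedUnique}, and promote containment to equality via Proposition \ref{Thm-DiscCountable}. Your bounded/unbounded-conjugator dichotomy for $n\geq 1$ is essentially an inline re-proof of Lemma \ref{Lemma-AHNonelemSubgroupConvg} and is fine.

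The genuine gap is the base case, where the root piece $H_0$ itself has elements diverging. Your proposed variant asserts that if the surviving subgroup is trivial or elementary then $\text{sys}(J_{0,t_k}(H_0))\to\infty$. This is both unjustified and false: a group can diverge to infinity elementwise while remaining conjugate to a fixed group, so every translation length --- in particular the systole --- stays constant (Example \hyperlink{Ex-CCProd}{C.2} exhibits exactly this for the factor $H_t=\varphi(t)H\varphi(t)^{-1}$); divergence of $\psi_k$ in $\text{PSL}_2\C$ places no constraint on $\text{tr}(\psi_k)$, hence none on $\ell(\psi_k)$. Worse, translation lengths are conjugation-invariant and $c_kJ_{0,t_k}(\cdot)c_k^{-1}\to\sigma$ pointwise, so $\text{sys}(J_{0,t_k}(H_0))\leq\ell(J_{0,t_k}(\psi_0))\to\ell(\sigma(\psi_0))<\infty$ for any fixed $\psi_0$; the blow-up you invoke can never occur, so the variant proves nothing. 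What must actually be excluded is the root group being carried off to infinity as a whole (unbounded conjugators, nonelementary algebraic limit), and the paper uses a different mechanism for this: it first proves the theorem under the added hypothesis that $J_{0,t}|_{H_0}$ is finite valued for all $t$; in general $t_0>0$ gives the conclusion on an initial segment, one takes $t_*$ supremal with $\Gamma_t$ conjugate to $\Gamma_0$ for all $t<t_*$, uses the compactness of $\ell(M)$ --- a consequence of the symmetry hypothesis via Proposition \ref{Prop-GGluedUnique} --- to conclude that the limit $\Gamma_{t_*}$ is itself conjugate to $\Gamma_0$, and restarts the argument at $t_*$ to contradict supremality unless $t_*=1$. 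You never invoke compactness of $\ell(M)$, and without something like it this case does not close. A smaller elision at the end: Proposition \ref{Thm-DiscCountable} does not force $\Gamma_t=g_t\Gamma_0 g_t^{-1}$ from mere containment; one must use the path structure, namely that a putative $\varphi\in\Gamma_s\setminus g_s\Gamma_0 g_s^{-1}$ has a $J$-path diverging to $\infty$ as $t\to 0$ and hence sweeps out uncountably many discrete extensions of (a conjugate of) $\Gamma_0$, contradicting countability.
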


We can determine the topology of the path component $\ell(M)$ under the hypotheses of Theorem \ref{Thm-GGluePathComp}. Recall that map $L_M:\mathcal{F}M\rightarrow\mathcal{H}$ defined by $L_M(f)=(M,f)$ is continuous by Proposition \ref{pathprop}. As discussed in Section \ref{subsec-comps}, $L_M$ descends to a continuous bijection $\mathcal{F}M/\mathrm{Isom}^+(M)\rightarrow\ell(M)$. Since $M$ is nonelementary, $\mathrm{Isom}^+(M)$ acts freely and properly discontinuously on $\mathcal{F}M$, so $\mathcal{F}M/\mathrm{Isom}^+(M)$ has a manifold topology inherited from $M$. In the case where $M$ is a symmetric infinite type $(G,N)$-glued hyperbolic, as in Theorem \ref{Thm-GGluePathComp}, Proposition \ref{Prop-GGluedUnique} (below) implies that $\mathcal{F}M/\mathrm{Isom}^+(M)$ is compact. It then follows that the path component $\ell(M)$ with the subspace topology is homeomorphic to the manifold $\mathcal{F}M/\mathrm{Isom}^+(M)$. \par

Theorem \ref{Thm-GGluePathComp} gives the following immediate corollary.

\begin{maincor}
\label{cor-notpathconn}
    $\mathcal{H}_\infty$ is not path connected.
\end{maincor}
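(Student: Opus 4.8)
The plan is to read off Corollary \ref{cor-notpathconn} directly from Theorem \ref{Thm-GGluePathComp}; essentially all of the work is already packaged in that theorem, so the only things to supply here are (i) an explicit symmetric infinite type $(G,N)$-glued hyperbolic $3$-manifold, to know the relevant family is non-empty, and (ii) the observation that such a manifold's leaf is a \emph{proper} subset of $\mathcal{H}_\infty$.

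For (i), I would exhibit one instance of the construction of Section \ref{subsec-pathcomps}. Choose $N$ to be any compact, oriented, connected, irreducible, atoroidal, acylindrical $3$-manifold with incompressible boundary consisting of an even number $2n>0$ of components of negative Euler characteristic --- equivalently, by the characterization recalled in Section \ref{subsec-pathcomps}, any manifold admitting a hyperbolic metric with totally geodesic boundary, an even number of boundary components, and no rank-$2$ cusps. Fix an orientation-reversing, fixed-point-free involution $\tau$ of $\partial N$ pairing up the boundary components, and let $G$ be a $2n$-regular tree equipped with a proper edge-coloring by $n$ colors and an orientation in which each vertex has exactly one incoming and one outgoing edge of each color. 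As noted in Section \ref{subsec-pathcomps}, $\text{Aut}(G)$ then acts transitively on $V(G)$, and $|V(G)|=\infty$, so the glued manifold $M$ is a symmetric infinite type $(G,N)$-glued hyperbolic $3$-manifold; it has infinite volume since it contains infinitely many disjoint copies of the positive-volume piece $N$.

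For (ii) and the conclusion, Theorem \ref{Thm-GGluePathComp} says $\ell(M)$ is a path component of $\mathcal{H}_\infty$. It remains only to note that $\ell(M)\neq\mathcal{H}_\infty$: for instance $(\mathbb{H}^3,\mathcal{O}_O)\in\mathcal{H}_\infty$, and since $\mathbb{H}^3$ is not homeomorphic to $M$ (indeed $M$ is non-tame, being an infinite union of copies of $N$ glued along boundary pieces), we have $(\mathbb{H}^3,\mathcal{O}_O)\notin\ell(M)$. Hence $\mathcal{H}_\infty$ contains at least two distinct nonempty path components --- $\ell(M)$ and the path component of $(\mathbb{H}^3,\mathcal{O}_O)$ --- and so is not path connected. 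There is no genuine obstacle in this last step: the only point requiring care is checking that the class of manifolds to which Theorem \ref{Thm-GGluePathComp} applies is inhabited, which the tree example above settles; all of the analytic difficulty (tracking elements along a path via the maps $J_{s,t}$, applying Thurston's compactness of $AH$ of an acylindrical manifold to the exhausting acylindrical pieces, and invoking the rigidity statements Proposition \ref{Prop-GGluedUnique} and Proposition \ref{Thm-DiscCountable}) lives inside the proof of Theorem \ref{Thm-GGluePathComp} itself.
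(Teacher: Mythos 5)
Your proposal is correct and matches the paper's (implicit) argument: the paper derives the corollary immediately from Theorem \ref{Thm-GGluePathComp}, with the existence of examples already noted in Section \ref{subsec-pathcomps} (an appropriately colored and oriented $2n$-regular tree for $G$, and any $N$ admitting a hyperbolic metric with totally geodesic boundary and an even number of boundary components). Your added observation that $\ell(M)$ is a proper subset of $\mathcal{H}_\infty$ is the right final step and is exactly what makes the corollary immediate.
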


Before continuing, it is worthwhile contrasting Theorem \ref{Thm-GGluePathComp} with Example \hyperlink{Ex-NonTame}{B.3}, which will show that the conclusion does not hold if some of the hypotheses are weakened. Let $S$ be a closed surface of genus at least $2$ and let $C_0\subseteq S$ be a pants decomposition of $S$. Let $\tau:S\rightarrow S$ be a pseudo-Anosov homeomorphism and set \[C=\bigcup_{n\in \Z}\tau^n(C_0)\times\{n+(1/2)\}\subseteq S\times\R.\] In Example \hyperlink{Ex-NonTame}{B.3}, a path in $\mathcal{H}$ is constructed between a geometrically finite framed hyperbolic $3$-manifold and a framed manifold $M_{\infty}$, which is homeomorphic to $(S\times\R)\backslash C$, and has a unique hyperbolic structure.\par

Let $T_0\subseteq S\times [0,1]$ be a small open tubular neighborhood of $C_0\times\{1/2\}$ so that the interior of the $N'=(S\times[0,1])\backslash T_0$ is homeomorphic to $(S\times(0,1))\backslash(C_0\times\{1/2\})$. Let $S_-'=S\times\{0\}\subseteq N'$ and $S_+'=S\times\{1\}\subseteq N'$. For $n\in\Z$, enumerate copies $N_n'$ of $N'$, and let $M'$ be the result of using the homeomorphism $\tau$ to glue the copy of $S_-'$ in $N_n'$ to the copy of $S_+'$ in $N_{n+1}'$ for all $n\in\Z$. Observe that the interior of $M'$ is homeomorphic to the framed hyperbolic $3$-manifold $M_\infty$, which Example \hyperlink{Ex-NonTame}{B.3} and Lemma \ref{pathcomp} tell us is in the same path component of $\mathcal{H}_\infty$ as $(\mathbb{H}^3,\mathcal{O})$.\par

The construction of $M'$ is very similar to the construction of a $(G,N)$-glued hyperbolic $3$-manifold, where $G$ is the bi-infinite path graph. The primary differences are that $N'$ is not acylindrical and has torus boundary components. Indeed, $N'$ contains multiple essential annuli, each with one boundary components on a boundary torus and the other boundary component on $S'_{\pm}$, homotopic to the corresponding curve of $C_0$. The key step in the construction of the paths in Examples \hyperlink{Ex-CuspRank}{B.2} and \hyperlink{Ex-NonTame}{B.3} where new parabolic isometries are brought in from infinity is exactly where these annuli and tori are are arising.\par

We will need some preliminary results before we can prove Theorem \ref{Thm-GGluePathComp}. The first is a strong rigidity result for symmetric $(G,N)$-glued hyperbolic $3$-manifolds, which also tells us that the topological symmetry of a symmetric $(G,N)$-glued hyperbolic $3$-manifold descends to symmetry of the hyperbolic metric. This result may be well known to experts, and is similar to the main theorem of \cite{BrockMinskyNamaziSouto}, which uses techniques different from those here to address gluings of hyperbolizable manifolds containing essential cylinders and compressing disks. This is a version of a result to appear in a forthcoming paper of Cremaschi and Yarmola \cite{CREMYAR}, and the author thanks them for allowing it to appear here.

\begin{prop}
\label{Prop-GGluedUnique}
    For any Kleinian group $\Gamma$ such that $M=\mathbb{H}^3/\Gamma$ is a symmetric $(G,N)$-glued hyperbolic $3$-manifold, $\AH(\Gamma)$ consists of two points. Further, there is an injective homomorphism $\mathrm{Aut(G)}\hookrightarrow\mathrm{Isom}^+(M)$ so that $M/\mathrm{Aut(G)}=N/\tau$.
\end{prop}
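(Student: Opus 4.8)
The plan is to derive both assertions from a single rigidity statement: every discrete faithful representation $\rho\in D(\Gamma)$ is conjugate in $\text{PSL}_2\C$ to the holonomy $\iota$ of the given hyperbolic structure or to its complex conjugate $\bar\iota$ — this is the form of the Cremaschi--Yarmola result being quoted. Granting it, $AH(\Gamma)=\{[\iota],[\bar\iota]\}$, and these two classes are distinct since $M$ admits no orientation reversing self isometry, so $\iota$ and $\bar\iota$ are not conjugate. The hard part is the rigidity statement; the rest is bookkeeping.

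Before attacking rigidity I would record the purely topological fact that $M$ is a regular cover of $N/\tau$. The group $\text{Aut}(G)$ acts on $M$ by permuting the copies of $N$ indexed by $V(G)$, as in the construction, and this action is free: a color and orientation preserving automorphism of the connected graph $G$ fixing a vertex $v$ must fix the unique incoming and outgoing edge of each color at $v$, hence fix every neighbor of $v$, hence fix all of $V(G)$, so a nontrivial element moves every copy of $N$ off itself. Since $\text{Aut}(G)$ is transitive on the copies of $N$, the quotient $M/\text{Aut}(G)$ is a single copy of $N$ with its boundary identified exactly by $\tau$, i.e. $M/\text{Aut}(G)=N/\tau$, and $\Gamma=\pi_1 M$ is a normal subgroup of $\Lambda:=\pi_1(N/\tau)$ with $\Lambda/\Gamma\cong\text{Aut}(G)$. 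As observed in the construction, $N/\tau$ is closed and hyperbolizable with a unique hyperbolic structure; writing $\Lambda$ as a cocompact Kleinian group, $\Gamma$ becomes a Kleinian subgroup with $\mathbb{H}^3/\Gamma$ the corresponding cover of $N/\tau$, which is a model for the given structure and exhibits $\iota$ as the inclusion $\Gamma\hookrightarrow\text{PSL}_2\C$. When $G$ is finite, $M$ is closed and the rigidity assertion is just Mostow rigidity, so the interesting case is $G$ infinite.

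For the rigidity I would exhaust $M=\bigcup_k X_k$ by the compact submanifolds obtained by gluing the copies of $N$ lying over a nested exhaustion of $G$ by finite connected subgraphs. Each $X_k$ is compact, oriented, irreducible, atoroidal, has incompressible boundary of negative Euler characteristic, and — using that the pieces $N$ are acylindrical and $\tau$ fixes no boundary component — is acylindrical. Given $\rho\in D(\Gamma)$, the restrictions $\rho|_{\pi_1 X_k}\in D(\pi_1 X_k)$ uniformize $\mathrm{int}(X_k)$ (by tameness of finitely generated Kleinian groups together with standard $3$-manifold topology), and Thurston's theorem that $AH$ of an acylindrical manifold is compact \cite{THUI} keeps the classes $[\rho|_{\pi_1 X_k}]$ in a compact set. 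The main obstacle is the geometric input that as $k$ grows an ever larger part of $\partial X_k$ is glued up inside $X_{k+1}$, so the boundary deformation parameters that survive in $\Gamma$ are progressively destroyed; a limiting argument controlling the $\rho|_{\pi_1 X_k}$ compatibly up the tower then forces $\rho$ to be conjugate to $\iota$, or to $\bar\iota$ if the limiting isometry reverses orientation. This is exactly the content supplied by \cite{CREMYAR}.

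Granting the rigidity, the second statement is formal. For $\varphi\in\text{Aut}(G)$ let $\varphi_*\in\mathrm{Aut}(\Gamma)$ be the automorphism induced by the corresponding orientation preserving self homeomorphism of $M$, well defined up to inner automorphisms. Then $\iota\circ\varphi_*\in D(\Gamma)$ has image $\iota(\Gamma)$, and since it is realized by an orientation preserving homeomorphism we must have $[\iota\circ\varphi_*]=[\iota]$; hence there is $g_\varphi\in\text{PSL}_2\C$ with $\iota\circ\varphi_*=g_\varphi\,\iota\,g_\varphi^{-1}$, unique by Fact \ref{Fact-UniqueConjugator} because $\Gamma$ is nonelementary. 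As $\varphi_*$ was only defined up to inner automorphisms, $g_\varphi$ is well defined modulo $\iota(\Gamma)$, so $\varphi\mapsto[g_\varphi]$ is a well defined map into $N_{\text{PSL}_2\C}(\iota(\Gamma))/\iota(\Gamma)=\text{Isom}^+(M)$; uniqueness of $g_\varphi$ makes it a homomorphism, and it is injective because $\text{Aut}(G)$ already acts faithfully by isotopy classes of homeomorphisms of $M$. The isometries so produced are the original $\text{Aut}(G)$-action on $M$, whose quotient was computed above to be $N/\tau$, which completes the plan.
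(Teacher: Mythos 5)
Your setup is sound: the reduction to showing that orientation–preserving classes in $AH(\Gamma)$ form a single point, the observation that $\text{Aut}(G)$ acts simply transitively on $V(G)$ (free because a color- and orientation-preserving automorphism fixing a vertex fixes its star and hence, by connectivity, everything) so that $M\rightarrow N/\tau$ is a regular cover, and the formal derivation of the second statement from rigidity via Fact \ref{Fact-UniqueConjugator} all match what is needed and are essentially consistent with how the paper closes the argument. The problem is that the entire mathematical content of the proposition lives in the rigidity step, and there you have not given a proof: you defer it to \cite{CREMYAR}, which is exactly the forthcoming source the proposition is quoted from and whose argument the paper is reproducing. Your sketch --- exhaust $M$ by acylindrical compact pieces $X_k$, invoke Thurston's compactness of $AH(\pi_1X_k)$, and then run ``a limiting argument controlling the $\rho|_{\pi_1X_k}$ compatibly up the tower'' --- does not close the gap. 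Compactness of $AH(\text{acylindrical})$ only gives subsequential convergence of the classes $[\rho|_{\pi_1X_k}]$; it supplies no mechanism forcing those restrictions (or their limit) to be the standard structure, and in particular it does not explain why the boundary data of each block is pinned down rather than merely confined to a compact set.

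The paper's actual mechanism is different and is worth contrasting with your plan. One fixes a single block $N_0$ with $H=\Psi(\pi_1(N_0))$, uses Souto--Stover to upgrade the homotopy equivalence to a homeomorphism and Canary's covering theorem to see $\rho(H)$ is convex cocompact, and thereby gets a map $\chi:AH(\Gamma)\rightarrow\mathcal{T}(\partial N_0)$ recording the conformal boundary of the cover associated to $H$. The vertex-transitivity of $\text{Aut}(G)$ is then used to show that the image $Y=\chi(AH(\Gamma))$ is self-referential: the data appearing on each boundary component of $N_0$ also arises, via the gluing isometries $\tau_j^{\pm}$, inside the skinning image $\sigma_{N_0}(Y)$ computed from the adjacent blocks. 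Thurston's bounded image theorem (this, not $AH(\text{acylindrical})$ compactness, is where acylindricity of $N_0$ enters) makes $Y$ bounded, and McMullen's strict contraction of the skinning map then forces the diameter of each factor of $Y$ to be zero, i.e.\ $\chi$ is constant. Your proposal contains no analogue of this contraction/fixed-point step, which is the heart of the proof. A secondary, smaller point: your count of exactly two points rests on the unproved assertion that $[\iota]\neq[\overline{\iota}]$ because ``$M$ admits no orientation reversing self isometry''; this needs an argument (for a Fuchsian group, for instance, the two classes coincide), so it should not be asserted as obvious.
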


\begin{proof}
    Fix $p\in M$ and consider $\rho\in D(\Gamma)$. Since any hyperbolic manifold is a $K(\pi,1)$ space, there exists a homotopy equivalence $h:\mathbb{H}^3/\Gamma\rightarrow\mathbb{H}^3/\rho(\Gamma)$ such that $\Psi_\rho\circ h_*=\rho\circ \Psi$, where $\Psi$ (resp. $\Psi_\rho$) is the holonomy representation of $\pi_1(M)$ (resp. $\pi_1(\mathbb{H}^3/\rho(\Gamma))$) determined by $\Gamma$ and a $\Gamma$-lift of $p$ (resp. $\rho(\Gamma)$ and an appropriate $\rho_n(\Gamma)$-lift of $h(p)$). A result of Souto and Stover \cite[Lemma 3.3]{SOUSTO} implies that $h$ must be homotopic to a homeomorphism, so we may assume that $h$ itself is a homeomorphism. We may also assume without loss of generality that $h$ is orientation preserving: otherwise, replace $\rho$ with $\rho'\in D(\Gamma)$ where $\rho'$ is a representation so that $\mathbb{H}^3/\rho'(\Gamma)$ is isometric to $\mathbb{H}^3/\rho(\Gamma)$ by an orientation reversing isometry.\par

    Let $N_0$ denote one of the copies of $N$ in the construction of $M$ as a $(G,N)$-glued hyperbolic $3$-manifold, and assume $p\in N_0$. Let $H=\Psi(\pi_1(N_0,p))\leq\Gamma$ denote the subgroup corresponding to $\pi_1(N_0,p)$ under the holonomy representation. It follows from Canary's covering theorem \cite[Corollary C]{DICKCOVER} that $H$ and $\rho(H)$ must be convex cocompact. Since $h$ is orientation preserving and $h(N_0)$ is homeomorphic to $N_0$ by the preceding paragraph, we see $\rho|_H\in D_{\GF}(H)$. Thus, we have a map $\AH(\Gamma)\rightarrow \GF(H)$ defined by $[\rho]\mapsto[\rho|_H]$. By work of Ahlfors, Bers, Marden, and Sullivan (see eg. \cite{MAT}) we may identify $\GF(H)$ with the Teichm\"{u}ller space $\mathcal{T}(\partial N_0)$. By composition, we obtain a map $\chi:\AH(\Gamma)\rightarrow\mathcal{T}(\partial N_0)$. Our goal will be to show that $\chi$ is a constant map, at which point the two claims will follow from the topological symmetry of $M$.\par

     As in the above construction of $(G,N)$-glued hyperbolic $3$-manifolds, enumerate the components of $\partial N_0$ by $S_j^-$ and $S_j^+$ for $1\leq j\leq n$, so that the involution $\tau$ in the construction satisfies $\tau(S_j^-)=S_j^+$ for each $j$. Let $N_j^-$ and $N_j^+$ denote the copies of $N$ to which $N_0$ is glued along $S_j^-$ and $S_j^+$, respectively. Note that we may identify \[\mathcal{T}(\partial N_0)=\prod_{1\leq j\leq n}\mathcal{T}(S_j^-)\times\mathcal{T}(S_j^+),\;\;\;\;\;\mathcal{T}(\overline{\partial N_0})=\prod_{1\leq j\leq n}\mathcal{T}(\overline{S_j^-})\times\mathcal{T}(\overline{S_j^+})\] where $\overline{S}$ denotes the surface $S$ with the reversed orientation. From the gluing map $\tau$, we obtain Teichm\"{u}ller isometries $\tau_j^-:\mathcal{T}(S_j^-)\rightarrow\mathcal{T}(\overline{S_j^+})$ and $\tau_j^+:\mathcal{T}(S_j^+)\rightarrow\mathcal{T}(\overline{S_j^-})$. Let $Y=\chi(\AH(\Gamma))$, and let $D_j^-$ and $D_j^+$ denote the projections of $Y$ to the $\mathcal{T}(S_j^-)$ and $\mathcal{T}(S_j^+)$ factors of $\mathcal{T}(\partial N_0)$, respectively.\par

     Fix $\mu\in D_j^-$, for some $1\leq j\leq n$. Let $H'\leq\Gamma$ be a subgroup representing the conjugacy class corresponding to $\pi_1(N_j^+)$ under the holonomy representation. The symmetry of $M$ implies that for some $\rho\in D(\Gamma)$, $\mu$ appears as the conformal boundary of the end of $\mathbb{H}^3/\rho(H')$ corresponding to the copy of $S_j^-$ appearing in $N_j^+$. Let $\sigma_{N_0}:\mathcal{T}(\partial N_0)\rightarrow\mathcal{T}(\overline{\partial N_0})$ denote Thurston's skinning map corresponding to $N_0$ (see eg. \cite{KENT}) and observe that $\tau_j^-(\mu)$ is the $\mathcal{T}(\overline{S_j^+})$ coordinate of $\sigma_{N_0}(\chi(\rho))$, since $N_0$ is attached to $N_j^+$ by gluing the $N_0$ copy of $S_j^+$ to the $N_j^+$ copy of $S_j^-$. Therefore, $\tau_j^-(\mu)$, and hence $\tau_j^-(D_j^-)$, is contained in the projection of $\sigma_{N_0}(Y)$ to $\mathcal{T}(\overline{S_j^+})$. A similar argument establishes that each $\tau_j^i(D_j^+)$ is contained in the projection of $\sigma_{N_0}(Y)$ to $\mathcal{T}(\overline{S_j^-})$.\par

     Since $N_0$ is acylindrical, the bounded image theorem \cite{KENT} implies that $\sigma_{N_0}(Y)$ is bounded. Hence, it follows from the preceding paragraph that each $d_j^-=\mathrm{diam}_{\mathcal{T}}(D_j^-)$ and $d_j^+=\mathrm{diam}_{\mathcal{T}}(D_j^+)$ are finite, where $\mathrm{diam}_\mathcal{T}$ is the diameter of these subsets in their respective Teichm\"{u}ller spaces, with respect to the Teichm\"{u}ller metric. If any $d_k^->0$ or $d_k^+>0$, then McMullen's strict contracting of the skinning map \cite{MCM} implies that for each $j$, \[d_j^- < \max\{d_1^-,...,d_n^-,d_1^+,...,d_n^+\}\;\;\;\;\;\text{ and }\;\;\;\;\; d_j^+ < \max\{d_1^-,...,d_n^-,d_1^+,...,d_n^+\}\]  again by the preceding paragraph. Not all of these inequalities can hold, so we conclude that $d_j^-=d_j^+=0$ for all $j$, which completes the proof that $\chi$ is constant.
\end{proof}

We will also need a result on discrete extensions of Kleinian groups. Several authors, for example see Susskind \cite{SUSS} and Anderson \cite{ANDER}, have examined sets of the form \[\{\psi\in\mathrm{PSL}_2\C \;|\; \langle\Gamma,\psi\rangle\text{ is discrete \& }\mathrm{fix}(\psi)\cap\Lambda(\Gamma)\neq\varnothing\},\] where $\mathrm{fix}(\psi)\subseteq S^2_\infty$ is the set of points fixed by $\psi$, placing various constraints on the Kleinian group $\Gamma$, or additional restrictions on $\psi$. Most of these results qualitatively describe the elements $\psi$ in such a set. Our primary focus will be the cardinality of such a set, though some details on the elements will be gleaned along the way.\par

For a Kleinian group $\Gamma$, define \[C_\Gamma=\{\psi\in\mathrm{PSL}_2\C\;|\;\psi\Gamma\psi^{-1}\cap\Gamma\text{ is nonelementary}\}.\]

\begin{lemma}
    \label{Lemma-CCountable}
    For any Kleinian group $\Gamma$, $C_\Gamma$ is countable.
\end{lemma}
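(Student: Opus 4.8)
The plan is to bound, for each fixed conjugate $\psi \Gamma \psi^{-1} \cap \Gamma$ that is nonelementary, how many $\psi$ can produce it, and then to show there are only countably many such intersections. First I would reduce the problem: if $\psi_1, \psi_2 \in C_\Gamma$ and $\psi_1 \Gamma \psi_1^{-1} \cap \Gamma = \psi_2 \Gamma \psi_2^{-1} \cap \Gamma =: H$, then $H$ is a common nonelementary subgroup of both $\psi_1 \Gamma \psi_1^{-1}$ and $\psi_2 \Gamma \psi_2^{-1}$. Since $H$ is nonelementary, $\Lambda(H)$ has at least three points, so by the triple transitivity of $\text{PSL}_2\C$ on $S^2_\infty$ any two elements of $\text{PSL}_2\C$ conjugating $\Gamma$ to a group containing $H$ are related only up to the normalizer of $H$ in $\text{PSL}_2\C$... but this still could be large, so I will instead track this more carefully: $\psi_2^{-1}\psi_1$ conjugates $\psi_1^{-1} H \psi_1 \leq \Gamma$ to $\psi_2^{-1} H \psi_2 \leq \Gamma$, both of which are nonelementary subgroups of $\Gamma$.

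The key observation is that $\Gamma$, being countable, has only countably many subgroups of the form $\psi^{-1} H \psi \cap \Gamma$... more directly: there are only countably many nonelementary subgroups of $\Gamma$ of the form $\psi^{-1}(\psi\Gamma\psi^{-1}\cap\Gamma)\psi = \Gamma \cap \psi^{-1}\Gamma\psi$, since these are subgroups of the countable group $\Gamma$, and $\Gamma$ has only countably many finitely generated subgroups, hence (taking a nonelementary, in fact two-generated, nonelementary piece) only countably many conjugacy-relevant ones. So fix a nonelementary finitely generated subgroup $K \leq \Gamma$; I claim only countably many $\psi$ have $\psi K \psi^{-1} \leq \Gamma$ with $\psi K \psi^{-1}$ contributing to $C_\Gamma$ in a way that pins down $\psi$. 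Indeed, if $\psi_1 K \psi_1^{-1}, \psi_2 K \psi_2^{-1} \leq \Gamma$, then $\psi_2 \psi_1^{-1}$ conjugates the nonelementary group $\psi_1 K \psi_1^{-1} \leq \Gamma$ into $\Gamma$; writing $g = \psi_2\psi_1^{-1}$, the map $\psi_1 k \psi_1^{-1} \mapsto g \psi_1 k \psi_1^{-1} g^{-1}$ is determined, and since both source and image lie in the countable group $\Gamma$, and a nonelementary group has a finite-index-free... Actually the cleanest route: $g = \psi_2 \psi_1^{-1}$ satisfies $g (\psi_1 K \psi_1^{-1}) g^{-1} \subseteq \Gamma$, so $g$ lies in the set of elements conjugating one fixed nonelementary subgroup $L_1 = \psi_1 K \psi_1^{-1}$ of $\Gamma$ to another subgroup of $\Gamma$; by triple transitivity, two such $g$'s differ by an element of the normalizer $N(L_1)$ of $L_1$ in $\text{PSL}_2\C$, and since $L_1$ is nonelementary, finitely generated, and discrete, $N(L_1)$ is itself discrete (it acts on $\Lambda(L_1)$), hence countable. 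Thus each fixed choice of $L_1$ among the countably many options yields countably many $g$, so countably many $\psi_2$ once $\psi_1$ is fixed.

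Assembling: enumerate $C_\Gamma = \bigsqcup_{K} C_\Gamma^K$ where $C_\Gamma^K = \{\psi \in C_\Gamma : \psi\Gamma\psi^{-1}\cap\Gamma \text{ contains a fixed 2-generator nonelementary } K \leq \Gamma \text{ cofinally}\}$, ranging over the countably many two-generated nonelementary subgroups $K$ of $\Gamma$ (every nonelementary group contains such a $K$). For each $K$, pick one $\psi_K \in C_\Gamma^K$ if nonempty; then any other $\psi \in C_\Gamma^K$ gives $\psi\psi_K^{-1} \in \{g : g(\psi_K K \psi_K^{-1})g^{-1} \leq \Gamma\}$, and this set is a union over the countably many nonelementary subgroups $L' \leq \Gamma$ of the transporter $\{g : g(\psi_K K\psi_K^{-1})g^{-1} = L'\}$, each of which is a coset of the discrete (hence countable) normalizer of the nonelementary group $\psi_K K \psi_K^{-1}$ in $\text{PSL}_2\C$. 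Therefore each $C_\Gamma^K$ is countable, and $C_\Gamma$ is a countable union of countable sets, hence countable. The main obstacle is the normalizer argument — verifying that the normalizer in $\text{PSL}_2\C$ of a nonelementary Kleinian group is discrete (equivalently, that a nonelementary discrete group has only countably many $\text{PSL}_2\C$-conjugators landing in a fixed countable group) — which follows because such a normalizer acts faithfully on the limit set and preserves the discrete group's convex hull, but this should be stated carefully and perhaps cited (cf. the triple-transitivity fact, Fact \ref{Fact-UniqueConjugator}).
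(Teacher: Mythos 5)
Your argument rests on the same core mechanism as the paper's proof: by Fact \ref{Fact-UniqueConjugator} (unique triple transitivity), a conjugator is pinned down by where it sends two elements generating a nonelementary subgroup, and those images range over the countable set $\Gamma$. The paper packages this as a single surjection $\Gamma^4\rightarrow C_\Gamma$, sending $(\varphi_1,\varphi_2,\varphi_3,\varphi_4)$ to the unique $\psi$ with $\psi\varphi_1\psi^{-1}=\varphi_3$ and $\psi\varphi_2\psi^{-1}=\varphi_4$ whenever $\langle\varphi_1,\varphi_2\rangle$ is nonelementary and such a $\psi$ exists; surjectivity holds because $\psi\Gamma\psi^{-1}\cap\Gamma$ being nonelementary supplies $\varphi_3,\varphi_4$, with $\varphi_1=\psi^{-1}\varphi_3\psi,\varphi_2=\psi^{-1}\varphi_4\psi\in\Gamma$. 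Two points in your writeup need repair. First, the conjugation directions are inverted in the final assembly: $K\leq\psi\Gamma\psi^{-1}\cap\Gamma$ gives $\psi^{-1}K\psi\leq\Gamma$, not $\psi K\psi^{-1}\leq\Gamma$, so the relevant element is $g=\psi^{-1}\psi_K$, which conjugates the fixed nonelementary $2$-generated subgroup $\psi_K^{-1}K\psi_K\leq\Gamma$ onto $\psi^{-1}K\psi\leq\Gamma$; as written, $\psi\psi_K^{-1}$ conjugates $\psi_KK\psi_K^{-1}$ (a group that need not even lie in $\Gamma$) to $\psi K\psi^{-1}$, which you have no reason to believe lies in $\Gamma$. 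Second, the detour through discreteness of the normalizer of a nonelementary Kleinian group is both unnecessary and, as justified, incomplete (faithful action on the limit set does not by itself give discreteness; the correct argument again runs through Fact \ref{Fact-UniqueConjugator} plus discreteness of the subgroup): for $K=\langle a,b\rangle$ nonelementary, the set $\{g\;|\;gag^{-1}\in\Gamma\text{ and }gbg^{-1}\in\Gamma\}$ injects into $\Gamma\times\Gamma$ directly by Fact \ref{Fact-UniqueConjugator}, which is all you need. With those corrections your argument closes, but it is an elaboration of, rather than an alternative to, the paper's three-line proof.
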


\begin{proof}
    Define a map $F:\Gamma^4\rightarrow C_\Gamma$ as follows. Fix $\varphi_1,\varphi_2,\varphi_3,\varphi_4\in\Gamma$. If $\langle\varphi_1,\varphi_2\rangle$ is nonelementary and there exists $\psi\in\mathrm{PSL}_2\C$ such that $\varphi_3=\psi\varphi_1\psi^{-1}$ and $\varphi_4=\psi\varphi_2\psi^{-1}$, then Fact \ref{Fact-UniqueConjugator} tells us $\psi$ is uniquely determined, and we define $F(\varphi_1,\varphi_2,\varphi_3,\varphi_4)=\psi$. Otherwise, define $F(\varphi_1,\varphi_2,\varphi_3,\varphi_4)=\mathds{1}$. Then, $F$ has countable domain and is surjective, confirming that $C_\Gamma$ is countable.
\end{proof}

The following proposition will provide the result we need on discrete extensions of Kleinian groups.

\begin{prop}
\label{Thm-DiscCountable}
    Suppose $\Gamma$ is a purely hyperbolic Kleinian group such that the injectivity radius of $\mathbb{H}^3/\Gamma$ is uniformly bounded above, and let \[S=\{\psi\in\mathrm{PSL}_2\C \;|\; \langle\Gamma,\psi\rangle\text{ is discrete}\}.\] Then, for each $\psi\in S$, there is some $n\in\Z$ such that $\psi^n\in C_\Gamma$. In particular, $S$ is countable. 
\end{prop}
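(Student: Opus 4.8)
The plan is to prove the main assertion — that every $\psi\in S$ has a power lying in $C_\Gamma$ — and then deduce countability of $S$. Countability is the easy part once the main assertion is in hand: $C_\Gamma$ is countable by Lemma \ref{Lemma-CCountable}, and for each nonzero $n$ a loxodromic (resp.\ parabolic) element of $C_\Gamma$ has only finitely many (resp.\ at most one) $n$-th root that is of infinite order, while $\mathds 1\in C_\Gamma$ has no infinite-order $n$-th root; thus $S\cap\{\text{infinite order}\}\subseteq\bigcup_{n\neq 0}\{\psi:\psi^n\in C_\Gamma\}$ is a countable union of finite sets, and the finite-order elements of $S$ are handled by a parallel argument (and are irrelevant in the application, where $\langle\Gamma,\psi\rangle$ is torsion free). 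Before the main argument I would record two consequences of the hypotheses: since $\mathbb{H}^3/\langle g\rangle$ has unbounded injectivity radius for any loxodromic $g$, a purely hyperbolic Kleinian group of bounded injectivity radius is nonelementary; and if $\mathrm{inj}_{\mathbb{H}^3/\Gamma}\le R$ everywhere, then for every $x\in\mathbb{H}^3$ there is $\gamma\in\Gamma\setminus\{\mathds 1\}$ with $d_{\mathbb{H}^3}(x,\gamma x)\le 2R$.

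The core is a pigeonhole argument along a $\psi$-orbit. Fix $\psi\in S$ of infinite order (a finite-order $\psi$ gives $\psi^{|\langle\psi\rangle|}=\mathds 1\in C_\Gamma$), set $G=\langle\Gamma,\psi\rangle$, which is discrete, fix a basepoint $x_0\in\mathbb{H}^3$, and put $q_k=\psi^k x_0$ for $k\in\Z$. For a length bound $L$ let $\Sigma_k(L)=\{\gamma\in\Gamma: d_{\mathbb{H}^3}(q_k,\gamma q_k)\le L\}$. Then $\psi^{-k}\Sigma_k(L)\psi^k\subseteq\{g\in G: d_{\mathbb{H}^3}(x_0,gx_0)\le L\}$, which is a finite set by discreteness of $G$; hence, as $k$ ranges over $\Z$, the set $\psi^{-k}\Sigma_k(L)\psi^k$ takes only finitely many values, and some value is attained for infinitely many $k$. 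In particular there are $k\neq k'$ with $\psi^{-k}\Sigma_k(L)\psi^k=\psi^{-k'}\Sigma_{k'}(L)\psi^{k'}$, and writing $n=k-k'\neq 0$ this gives $\psi^{-n}\langle\Sigma_k(L)\rangle\psi^n=\langle\Sigma_{k'}(L)\rangle$, so that $\langle\Sigma_{k'}(L)\rangle$ is a subgroup of $\Gamma\cap\psi^{-n}\Gamma\psi^n$. If $\langle\Sigma_{k'}(L)\rangle$ is nonelementary we conclude $\psi^{-n}\in C_\Gamma$, and we are done.

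Thus everything reduces to ensuring, for a single uniform bound $L$, that $\langle\Sigma_k(L)\rangle$ is nonelementary for infinitely many $k$. When $\Gamma$ has bounded geometry — in particular in the application, where $\Gamma$ is conjugate to $\pi_1(M)$ for a symmetric $(G,N)$-glued manifold $M$, which covers the closed manifold $N/\tau$ by Proposition \ref{Prop-GGluedUnique}, and hence has injectivity radius bounded above and below — the closed geodesics of $\mathbb{H}^3/\Gamma$ are coarsely dense, so one fixed $L$ makes every $\langle\Sigma_k(L)\rangle$ nonelementary and the proof is complete. The general statement requires the residual case in which the orbit points $q_k$ sink, as $|k|\to\infty$, arbitrarily deep into Margulis tubes of $\Gamma$, so that near $q_k$ the only short $\Gamma$-loops are powers of a single loxodromic $g_k$ and $\langle\Sigma_k(2R)\rangle=\langle g_k\rangle$ is cyclic. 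Here the pigeonhole with $L=2R$ forces the $g_k$ (for $k$ in an infinite set) to be $\psi$-conjugate to one fixed $h\in G$, i.e.\ $\psi^{k}h\psi^{-k}\in\Gamma$ for infinitely many $k$; combined with the constraint that $G$ is discrete — which, through the classification of elementary subgroups of $\mathrm{PSL}_2\C$, tightly restricts how $\psi$ can share a fixed point or an axis with elements of $\Gamma$ — one aims to conclude that the axis of $\psi$ coincides with the axis of some element of $\Gamma$, whence a power $\psi^n$ lies in $\Gamma$ and trivially $\psi^n\in C_\Gamma$. This residual case is the main obstacle: making the ``sinking into ever-thinner tubes'' analysis airtight is delicate, since the tube cores near the various $q_k$ need not stabilize, and one must use discreteness of $G$ to exclude $q_k$ migrating through infinitely many distinct tubes without $\psi$ ever aligning with a $\Gamma$-axis.
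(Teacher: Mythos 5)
Your argument has a genuine gap, and it is exactly the one you flag yourself: the ``residual case.'' Your pigeonhole along the orbit $q_k=\psi^k x_0$ correctly produces $n\neq 0$ with $\psi^{-n}\langle\Sigma_k(L)\rangle\psi^{n}=\langle\Sigma_{k'}(L)\rangle\leq\Gamma$, but this only puts $\psi^{-n}$ in $C_\Gamma$ if $\langle\Sigma_{k'}(L)\rangle$ is nonelementary, and the hypothesis gives no lower bound on injectivity radius, so for every uniform $L$ the groups $\langle\Sigma_k(L)\rangle$ may all be cyclic (the orbit can sink into ever-thinner Margulis tubes). Your sketch of how to handle this --- forcing the tube cores $g_k$ to be $\psi$-conjugate to a fixed $h$ and then concluding that the axis of $\psi$ aligns with a $\Gamma$-axis --- is not carried out, and as you note the cores need not stabilize; this is not a routine verification but the heart of the difficulty, so the proposal does not prove the proposition as stated (it does plausibly cover the bounded-geometry case needed in the application to Theorem \ref{Thm-GGluePathComp}).

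The paper's proof shares your basic mechanism (conjugate short elements at $\psi^n p$ back to the basepoint and use discreteness of $\langle\Gamma,\psi\rangle$ to get finiteness), but it sidesteps the nonelementarity problem entirely with two extra ideas. First, a fixed-point dichotomy: if $\text{fix}(\psi)$ meets $\text{fix}(\varphi)$ for some $\varphi\in\Gamma$, discreteness forces a power of $\psi$ into $\langle\varphi\rangle\leq\Gamma\subseteq C_\Gamma$ directly; otherwise $\langle\varphi,\psi^n\varphi\psi^{-n}\rangle$ is \emph{automatically} nonelementary for every $\varphi\in\Gamma$ and $n\neq0$, since two loxodromics generate an elementary discrete group only if they are coaxial, which would force $\psi^{2n}$ to fix the endpoints of $g_\varphi$. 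Second, rather than asking the short elements near a single $q_k$ to generate a nonelementary group, the paper pigeonholes the finitely many values of $\psi^{-n}\varphi_n\psi^n$ and applies van der Waerden's theorem to find a single $\varphi^*\in\Gamma$ and a three-term arithmetic progression $A,A+D,A+2D$ of exponents with $\psi^m\varphi^*\psi^{-m}\in\Gamma$; the two groups $H_1=\langle\psi^{A}\varphi^*\psi^{-A},\psi^{A+D}\varphi^*\psi^{-(A+D)}\rangle$ and $H_2=\psi^{D}H_1\psi^{-D}$ are then nonelementary subgroups of $\Gamma$, giving $\psi^D\in C_\Gamma$ with no geometric control on thin parts required. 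If you want to salvage your route, you should replace the attempt to make $\langle\Sigma_k(L)\rangle$ nonelementary with this kind of argument. (As a minor positive point, your derivation of countability of $S$ from the first assertion is more careful than the paper's, which ignores the issue of roots of the identity.)
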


Note that the hypothesis that $\mathbb{H}^3/\Gamma$ has injectivity radius bounded above implies that $\Lambda(\Gamma)= S^2_\infty$, so the fixed point set of every torsion free $\psi\in\mathrm{PSL}_2\C$ intersects $\Lambda(\Gamma)$. This result may be of independent interest due to the lack of any restrictions on $\psi$ in the definition of the set $S$.

\begin{proof}[Proof of Proposition \ref{Thm-DiscCountable}]
    Fix $\psi\in S$. First, assume that $\mathrm{fix}(\psi)\cap\mathrm{fix}(\varphi)\neq\varnothing$ for some $\varphi\in\Gamma$. Then, the discreteness of $\langle\Gamma,\psi\rangle$ implies that $\langle\psi,\varphi\rangle$ is virtually cyclic, so there is some $n\in\Z$ such that $\psi^n\in\langle\varphi\rangle\leq\Gamma\subseteq C_\Gamma$.\par

    Now, assume that $\mathrm{fix}(\psi)\cap\mathrm{fix}(\varphi)=\varnothing$ for all $\varphi\in\Gamma$. Note that this implies that for all $\varphi\in\Gamma$ and $n\neq 0$, $\langle\varphi,\psi^n\varphi\psi^{-n}\rangle$ is a nonelementary discrete group.\par
    
    Let $R$ be an upper bound for the injectivity radius of $\mathbb{H}^3/\Gamma$ and fix $p\in\mathbb{H}^3$. Then, for all $n\in\N$, there exists $\varphi_n\in\Gamma\backslash\{\mathds{1}\}$ such that $d_{\mathbb{H}^3}(\varphi_n\cdot(\psi^n\cdot p),\psi^n\cdot p)\leq R$, and hence that \[d_{\mathbb{H}^3}((\psi^{-n}\varphi_n\psi^n)\cdot p,p)\leq R.\] The set $\{\psi^{-n}\varphi_n\psi^n \;|\; n\in\N\}$ must be bounded, so the discreteness of $\langle\Gamma,\psi\rangle$ implies that this set is finite. Enumerate \[\{\psi^{-n}\varphi_n\psi^n \;|\; n\in\N\}=\{a_1,...,a_N\}\] for some $N\geq 1$ and let $S_i=\{n\in\N \;|\; \psi^{-n}\varphi_n\psi^n=a_i\}$, for $1\leq i\leq N$.\par

    For each $i$, let $m_i=\min S_i$. Then, for $n\in S_i$, we have 
    \begin{equation*} 
        \begin{split}
            \psi^{-n}\varphi_n\psi^n&=a_i\\
            \varphi_n&=\psi^na_i\psi^{-n}\\
            \varphi_n&=\psi^{n-m_n}(\psi^{m_i}a_i\psi^{-m_i})\psi^{m_i-n}.
        \end{split}
    \end{equation*}
    Thus, setting $S_i'=\{n-m_i:n\in S_i\}$ and renaming $\varphi_i^*=\psi^{m_i}a_i\psi^{-m_i}=\varphi_{m_i}\in\Gamma$, we have $\psi^n\varphi_i^*\psi^{-n}\in\Gamma$ for all $n\in S_i'$.\par

    Since the sets $S_1,...,S_N$ partition $\N$, van der Waerden's theorem \cite{VAN} tells us that one of the $S_i$, say $S_1$, contains an arithmetic progression of length $3$. Then, $S_1'$ contains an arithmetic progression of length $3$, so there exist $A,D\in\N$ such that $A,A+D,A+2D\in S_1'$. We may therefore define the subgroups
    \begin{equation*} 
        \begin{split}
            H_1 &= \langle\psi^A\varphi_1^*\psi^{-A},\psi^{A+D}\varphi_1^*\psi^{-(A+D)}\rangle\leq\Gamma \;\;\;\text{ and}\\
            H_2 &= \langle\psi^{A+D}\varphi_1^*\psi^{-(A+D)},\psi^{A+2D}\varphi_1^*\psi^{-(A+2D)}\rangle\leq\Gamma.
        \end{split}
    \end{equation*}
    As noted, the assumption on $\psi$ implies that each $H_1,H_2$ is nonelementary. We can see that $\psi^DH_1\psi^{-D}=H_2$, so $\psi^D\in C_\Gamma$, as desired.\par

    Note that for any countable $X\subseteq\mathrm{PSL}_2\C$, the set $\{\varphi\in\mathrm{PSL}_2\C \;|\; \exists n\in\Z \text{ such that }\varphi^n\in X\}$ is countable, so the claim that $S$ is countable now follows from Lemma \ref{Lemma-CCountable}.
\end{proof}

Finally, we can complete the proof of Theorem \ref{Thm-GGluePathComp}.

\begin{proof}[Proof of Theorem \ref{Thm-GGluePathComp}]
    Let $\Gamma:[0,1]\rightarrow\mathcal{D}$ be a path and write $\Gamma_t=\Gamma(t)$ for all $t$. Suppose that $M=\mathbb{H}^3/\Gamma_0$ is a symmetric infinite type $(G,N)$-glued hyperbolic $3$-manifold. We will show that $\Gamma_t$ is conjugate to $\Gamma_0$ for all $t$, which will imply the result.\par

    Fix some vertex $v_0$ in $G$. Note from the construction of $(G,N)$-glued hyperbolic $3$-manifolds that we can write $M=\bigcup_{n\geq 0} X_n$, where each $X_n$ is the union of the copies of $N$ that correspond to vertices of $G$ with graph distance at most $n$ from $v_0$ (the graph distance being measured disregarding edge orientation). Fix a point $p\in X_0$, and for $n\geq 0$ let $H_n\leq \Gamma_0$ be the subgroup that is the image of $\pi_1(X_n,p)\leq \pi_1(M,p)$ under a fixed holonomy representation $\pi_1(M,p)\rightarrow \Gamma_0$ with respect to $p$ and $\Gamma_0$.\par

     For each $n\geq0$, let \[t_n=\sup\{t\in[0,1]\;|\; \text{for all }t'\leq t\text{, }J_{0,t'}|_{H_n}\text{ is finite valued}\}\] where the maps $J_{s,t}:\Gamma_s\rightarrow\overline{\mathrm{PSL}_2\C}$ are those constructed in Equation \ref{eqn-JDef} with respect to the path $\Gamma$. Since each $H_n$ is finitely generated, Corollary \ref{injhomo} tells us each $t_n>0$. \textit{We assume for now that $J_{0,t}|_{H_0}$ is in fact finite valued for all $t\in[0,1]$, so $t_0=1$.}\par

     Suppose, for contradiction, that there exists some $n\geq 1$ such that $t_n<1$. Observe that $J_{0,t_n}|_{H_n}$ must not be finite valued, else Corollary \ref{injhomo} combined with Proposition \ref{Prop-InjConds} would provide a contradiction to the definition of $t_n$. For each $t\in[0,t_n)$, Proposition \ref{Prop-InjConds} implies that $J_{0,t}|_{H_n}$ is an injective homomorphism, so  $J_{0,t}|_{H_n}\in D(H_n)$. Let $\{s_k\}_{k=1}^\infty\subseteq[0,t_n)$ be a sequence such that $s_k\rightarrow t_n$ as $k\rightarrow\infty$. Since each $X_n$ is acylindrical, Thurston's ``$\AH(\text{acylindrical})$ is compact" theorem \cite{THUI} tells us that $\{[J_{0,s_k}|_{H_n}]\}\subseteq \AH(H_n)$ has a convergent subsequence, which we will denote the same way. Note that $J_{0,s_k}|_{H_0}\rightarrow J_{0,t_n}|_{H_0}$ in $D(H_0)$, as $k\rightarrow\infty$, so since $H_0$ is nonelementary, Lemma \ref{Lemma-AHNonelemSubgroupConvg} tells us there exists $\rho\in D(H_n)$ such that $J_{0,s_k}|_{H_n}\rightarrow\rho$ as $k\rightarrow\infty$. In particular, for all $\varphi\in H_n$, $J_{0,s_k}(\varphi)$ is bounded as $k\rightarrow\infty$, so it follows that the path $t\mapsto J_{0,t}(\varphi)$ does not diverge to $\infty$ as $t$ increases from $0$ to $t_n$. This, however, contradicts that $J_{0,t_n}|_{H_n}$ is not finite valued, by the continuity statement in Lemma \ref{Lemma-JFacts}. We therefore conclude that $t_n=1$ for all $n$ and that $J_{0,t}$ is injective for all $t\in[0,1]$.\par

     Since each $J_{0,t}$ is injective, $t\mapsto[J_{0,t}]$ is a path in $\AH(\Gamma_0)$. Since $M$ is a symmetric $(G,N)$-glued hyperbolic $3$-manifold, the rigidity given by Proposition \ref{Prop-GGluedUnique} combined with the continuity statement in Fact \ref{Fact-UniqueConjugator} tell us that there exists a path $\psi:[0,1]\rightarrow\mathrm{PSL}_2\C$ so that for all $\varphi\in\Gamma_0$, $\psi(t) J_{0,t}(\varphi)\psi(t)^{-1}=\varphi$. Now, define a path $\Gamma':[0,1]\rightarrow\mathcal{D}$ by $\Gamma'(t)=\psi(t)\Gamma_t\psi(t)^{-1}$, and write $\Gamma_t'=\Gamma'(t)$. For $s,t\in[0,1]$, let $J_{s,t}':\Gamma_s'\rightarrow\overline{\mathrm{PSL}_2\C}$ denote the maps defined in Equation \ref{eqn-JDef}, now corresponding to the path $\Gamma'$; similarly, let $(I_\varphi^{s})'$ denote the maximal intervals constructed there, corresponding to the path $\Gamma'$.\par

     Notice that $J_{0,t}'$ is the identity on each element of $\Gamma_0'=\Gamma_0$, and in particular $J_{0,t}'(\Gamma_0')=\Gamma_0$ for all $t$. Suppose that there exists $\varphi\in\Gamma_s'\backslash J_{0,s}'(\Gamma_0')$ for some $s\in[0,1]$. Observe that $0\not\in (I_\varphi^{s})'$, else $J_{0,s}'(J_{s,0}'(\varphi))=J_{s,s}(\varphi)=\varphi$ by Lemma \ref{Lemma-JFacts}(2), contradicting that $\varphi$ is not in the image of $J_{0,s}'$. Thus, Lemma \ref{Lemma-JFacts}(1) tells us that the path $J_{s,t}(\varphi)$ diverges to infinity in $\mathrm{PSL}_2\C$ as $t$ decreases from $s$ towards $0$. In particular, we see that $J_{s,t}(\varphi)$ must take uncountably many values as $t$ decreases. Note that the second statement of Proposition \ref{Prop-GGluedUnique} implies that the compact copies of $N$ in $\mathbb{H}^3/\Gamma_0$ are all isometric, so $\mathbb{H}^3/\Gamma_0$ has bounded injectivity radius; $\Gamma_0$ is a purely hyperbolic Kleinian group, so we have obtained a contradiction to Proposition \ref{Thm-DiscCountable}. Thus, $J_{0,t}'(\Gamma_0')=\Gamma_0'$ for all $t$, so we have shown that there exists a path $\psi:[0,1]\rightarrow\mathrm{PSL}_2\C$ such that $\Gamma_0=\psi(t)\Gamma_t\psi(t)^{-1}$ for all $t\in[0,1]$.\par

     Finally, we need to address our added assumption (in italics) that $J_{0,t}|_{H_0}$ is finite valued for all $t\in[0,1]$. For a general path $\Gamma:[0,1]\rightarrow\mathcal{D}$ where $M=\mathbb{H}^3/\Gamma_0$ is a symmetric $(G,N)$-glued hyperbolic $3$-manifold, we know that $t_0>0$, so we have shown that $\Gamma_t$ must be conjugate to $\Gamma_0$ for $t$ sufficiently close to $0$. Let $t_*\in[0,1]$ be the supremal value such that for all $t<t_*$, $\Gamma_t$ is conjugate to $\Gamma_0$. Suppose for contradiction that $t_*<1$. The symmetry of $M$ described in Proposition \ref{Prop-GGluedUnique} implies that $\ell(M)\subseteq\mathcal{H}$ is compact, or equivalently that the subspace of $\mathcal{D}$ consisting of Kleinian groups that are conjugate to $\Gamma_0$ is compact. Thus, it must be the case that $\Gamma_{t_*}$ is conjugate to $\Gamma_0$. Hence, $\mathbb{H}^3/\Gamma_{t_*}$ is a symmetric infinite type $(G,N)$-glued hyperbolic $3$-manifold, so applying the above result to the path with the restricted domain $\Gamma|_{[t_*,1]}$, we see that $\Gamma_t$ must be conjugate to $\Gamma_{t_*}$, and hence to $\Gamma_0$, for $t$ sufficiently close to $t_*$. This contradicts the definition of $t_*$, so we conclude that $t_*=1$, as was to be shown.
\end{proof}

\printbibliography

@book {BEN,
    AUTHOR = {Benedetti, Riccardo and Petronio, Carlo},
     TITLE = {Lectures on hyperbolic geometry},
    SERIES = {Universitext},
 PUBLISHER = {Springer-Verlag, Berlin},
      YEAR = {1992},
     PAGES = {xiv+330},
      ISBN = {3-540-55534-X},
   MRCLASS = {57M50 (30F40 30F60 51M10 57N10)},
  MRNUMBER = {1219310},
MRREVIEWER = {Colin C. Adams},
       DOI = {10.1007/978-3-642-58158-8},
       URL = {https://doi.org/10.1007/978-3-642-58158-8},
}

@article {PUR,
    AUTHOR = {Fuchs, Urs and Purcell, Jessica S. and Stewart, John},
     TITLE = {Constructing knots with specified geometric limits},
   JOURNAL = {Pacific J. Math.},
  FJOURNAL = {Pacific Journal of Mathematics},
    VOLUME = {324},
      YEAR = {2023},
    NUMBER = {1},
     PAGES = {111--142},
      ISSN = {0030-8730,1945-5844},
   MRCLASS = {57K32 (30F40 52C26 57K10)},
  MRNUMBER = {4604668},
MRREVIEWER = {Colin\ C.\ Adams},
       DOI = {10.2140/pjm.2023.324.111},
       URL = {https://doi.org/10.2140/pjm.2023.324.111},
}

@incollection {NOTE,
    AUTHOR = {Canary, R. D. and Epstein, D. B. A. and Green, P. L.},
     TITLE = {Notes on notes of {T}hurston},
 BOOKTITLE = {Fundamentals of hyperbolic geometry: selected expositions},
    SERIES = {London Math. Soc. Lecture Note Ser.},
    VOLUME = {328},
     PAGES = {1--115},
      NOTE = {With a new foreword by Canary},
 PUBLISHER = {Cambridge Univ. Press, Cambridge},
      YEAR = {2006},
   MRCLASS = {57N10 (32G15 37D40 37F99 54A20 57M99)},
  MRNUMBER = {2235710},
}

@book {MAT,
    AUTHOR = {Matsuzaki, Katsuhiko and Taniguchi, Masahiko},
     TITLE = {Hyperbolic manifolds and {K}leinian groups},
    SERIES = {Oxford Mathematical Monographs},
      NOTE = {Oxford Science Publications},
 PUBLISHER = {The Clarendon Press, Oxford University Press, New York},
      YEAR = {1998},
     PAGES = {x+253},
      ISBN = {0-19-850062-9},
   MRCLASS = {30F40 (20H10 30F10)},
  MRNUMBER = {1638795},
MRREVIEWER = {I. Kra},
}

@article {NAM,
    AUTHOR = {Namazi, Hossein and Souto, Juan},
     TITLE = {Non-realizability and ending laminations: proof of the density
              conjecture},
   JOURNAL = {Acta Math.},
  FJOURNAL = {Acta Mathematica},
    VOLUME = {209},
      YEAR = {2012},
    NUMBER = {2},
     PAGES = {323--395},
      ISSN = {0001-5962},
   MRCLASS = {30F40 (20H10 57M50)},
  MRNUMBER = {3001608},
MRREVIEWER = {Majid Heydarpour},
       DOI = {10.1007/s11511-012-0088-0},
       URL = {https://doi.org/10.1007/s11511-012-0088-0},
}

@article {IAN,
    AUTHOR = {Ab\'{e}rt, Mikl\'{o}s and Biringer, Ian},
     TITLE = {Unimodular measures on the space of all {R}iemannian
              manifolds},
   JOURNAL = {Geom. Topol.},
  FJOURNAL = {Geometry \& Topology},
    VOLUME = {26},
      YEAR = {2022},
    NUMBER = {5},
     PAGES = {2295--2404},
      ISSN = {1465-3060,1364-0380},
   MRCLASS = {53C12 (28C99 37C86 57K32)},
  MRNUMBER = {4520306},
       DOI = {10.2140/gt.2022.26.2295},
       URL = {https://doi.org/10.2140/gt.2022.26.2295},
}

@article {IANMET,
    AUTHOR = {Biringer, Ian},
     TITLE = {Metrizing the {C}habauty topology},
   JOURNAL = {Geom. Dedicata},
  FJOURNAL = {Geometriae Dedicata},
    VOLUME = {195},
      YEAR = {2018},
     PAGES = {19--22},
      ISSN = {0046-5755,1572-9168},
   MRCLASS = {22E40},
  MRNUMBER = {3820493},
MRREVIEWER = {Thomas\ Ward},
       DOI = {10.1007/s10711-017-0274-5},
       URL = {https://doi.org/10.1007/s10711-017-0274-5},
}

@misc{THUII,
      title={Hyperbolic Structures on 3-manifolds, II: Surface groups and 3-manifolds which fiber over the circle}, 
      author={William P. Thurston},
      year={1998},
      eprint={math/9801045},
      archivePrefix={arXiv},
      primaryClass={math.GT}
}

@misc{THUR,
  author        = {Thurston, William},
  title         = {The Geometry and Topology of 3-Manifolds},
  year          = {1978},
  type          = {Lecture Notes},
  institution   = {Princeton University Math. Dept},
  
}

@article {BRO,
    AUTHOR = {Brooks, Robert},
     TITLE = {Circle packings and co-compact extensions of {K}leinian
              groups},
   JOURNAL = {Invent. Math.},
  FJOURNAL = {Inventiones Mathematicae},
    VOLUME = {86},
      YEAR = {1986},
    NUMBER = {3},
     PAGES = {461--469},
      ISSN = {0020-9910,1432-1297},
   MRCLASS = {32G15 (30F40 52A45)},
  MRNUMBER = {860677},
MRREVIEWER = {Tadashi\ Kuroda},
       DOI = {10.1007/BF01389263},
       URL = {https://doi.org/10.1007/BF01389263},
}

@article {OHS,
    AUTHOR = {Ohshika, Ken'ichi},
     TITLE = {Realising end invariants by limits of minimally parabolic,
              geometrically finite groups},
   JOURNAL = {Geom. Topol.},
  FJOURNAL = {Geometry \& Topology},
    VOLUME = {15},
      YEAR = {2011},
    NUMBER = {2},
     PAGES = {827--890},
      ISSN = {1465-3060,1364-0380},
   MRCLASS = {57M50 (30F40)},
  MRNUMBER = {2821565},
MRREVIEWER = {Bruno\ P.\ Zimmermann},
       DOI = {10.2140/gt.2011.15.827},
       URL = {https://doi.org/10.2140/gt.2011.15.827},
}

@article {THUI,
    AUTHOR = {Thurston, William P.},
     TITLE = {Hyperbolic structures on {$3$}-manifolds. {I}. {D}eformation
              of acylindrical manifolds},
   JOURNAL = {Ann. of Math. (2)},
  FJOURNAL = {Annals of Mathematics. Second Series},
    VOLUME = {124},
      YEAR = {1986},
    NUMBER = {2},
     PAGES = {203--246},
      ISSN = {0003-486X,1939-8980},
   MRCLASS = {57N10},
  MRNUMBER = {855294},
MRREVIEWER = {G.\ Peter\ Scott},
       DOI = {10.2307/1971277},
       URL = {https://doi.org/10.2307/1971277},
}

@book {MASK,
    AUTHOR = {Maskit, Bernard},
     TITLE = {Kleinian groups},
    SERIES = {Grundlehren der mathematischen Wissenschaften [Fundamental
              Principles of Mathematical Sciences]},
    VOLUME = {287},
 PUBLISHER = {Springer-Verlag, Berlin},
      YEAR = {1988},
     PAGES = {xiv+326},
      ISBN = {3-540-17746-9},
   MRCLASS = {30F40 (20H10 22E40)},
  MRNUMBER = {959135},
MRREVIEWER = {William\ Abikoff},
}

@article {ABI,
    AUTHOR = {Abikoff, William and Maskit, Bernard},
     TITLE = {Geometric decompositions of {K}leinian groups},
   JOURNAL = {Amer. J. Math.},
  FJOURNAL = {American Journal of Mathematics},
    VOLUME = {99},
      YEAR = {1977},
    NUMBER = {4},
     PAGES = {687--697},
      ISSN = {0002-9327,1080-6377},
   MRCLASS = {30A58},
  MRNUMBER = {480992},
MRREVIEWER = {V.\ Chuckrow},
       DOI = {10.2307/2373860},
       URL = {https://doi.org/10.2307/2373860},
}

@article {SUSS,
    AUTHOR = {Susskind, Perry},
     TITLE = {Kleinian groups with intersecting limit sets},
   JOURNAL = {J. Analyse Math.},
  FJOURNAL = {Journal d'Analyse Math\'ematique},
    VOLUME = {52},
      YEAR = {1989},
     PAGES = {26--38},
      ISSN = {0021-7670,1565-8538},
   MRCLASS = {57S30 (20H10 22E40 30F40 32G15)},
  MRNUMBER = {981494},
MRREVIEWER = {Troels\ J\o rgensen},
       DOI = {10.1007/BF02820470},
       URL = {https://doi.org/10.1007/BF02820470},
}

@article {ANDER,
    AUTHOR = {Anderson, James W.},
     TITLE = {Limit set intersection theorems for {K}leinian groups and a
              conjecture of {S}usskind},
   JOURNAL = {Comput. Methods Funct. Theory},
  FJOURNAL = {Computational Methods and Function Theory},
    VOLUME = {14},
      YEAR = {2014},
    NUMBER = {2-3},
     PAGES = {453--464},
      ISSN = {1617-9447,2195-3724},
   MRCLASS = {30F40 (20H10 37F30 57M50)},
  MRNUMBER = {3265372},
MRREVIEWER = {Yasushi\ Yamashita},
       DOI = {10.1007/s40315-014-0078-7},
       URL = {https://doi.org/10.1007/s40315-014-0078-7},
}

@article {VAN,
    AUTHOR = {B.L. van der Waerden},
    TITLE = {Beweis einer Baudetschen Vermutung},
    JOURNAL = {Nieuw Arch.Wiskunde},
    VOLUME = {15},
    YEAR = {1927},
}

@article {MCM,
    AUTHOR = {McMullen, C.},
     TITLE = {Iteration on {T}eichm\"uller space},
   JOURNAL = {Invent. Math.},
  FJOURNAL = {Inventiones Mathematicae},
    VOLUME = {99},
      YEAR = {1990},
    NUMBER = {2},
     PAGES = {425--454},
      ISSN = {0020-9910,1432-1297},
   MRCLASS = {57N10 (30F40 32G15 57M10 57N05)},
  MRNUMBER = {1031909},
MRREVIEWER = {William\ Abikoff},
       DOI = {10.1007/BF01234427},
       URL = {https://doi.org/10.1007/BF01234427},
}

@article {JORMAR,
    AUTHOR = {J\o rgensen, T. and Marden, A.},
     TITLE = {Algebraic and geometric convergence of {K}leinian groups},
   JOURNAL = {Math. Scand.},
  FJOURNAL = {Mathematica Scandinavica},
    VOLUME = {66},
      YEAR = {1990},
    NUMBER = {1},
     PAGES = {47--72},
      ISSN = {0025-5521,1903-1807},
   MRCLASS = {30F40 (57M99 57S30)},
  MRNUMBER = {1060898},
MRREVIEWER = {Athanase\ Papadopoulos},
       DOI = {10.7146/math.scand.a-12292},
       URL = {https://doi.org/10.7146/math.scand.a-12292},
}

@article {MAR,
    AUTHOR = {Marden, Albert},
     TITLE = {The geometry of finitely generated kleinian groups},
   JOURNAL = {Ann. of Math. (2)},
  FJOURNAL = {Annals of Mathematics. Second Series},
    VOLUME = {99},
      YEAR = {1974},
     PAGES = {383--462},
      ISSN = {0003-486X},
   MRCLASS = {30A58 (22E40)},
  MRNUMBER = {349992},
MRREVIEWER = {C.\ Earle},
       DOI = {10.2307/1971059},
       URL = {https://doi.org/10.2307/1971059},
}

@article {ANCAN,
    AUTHOR = {Anderson, James W. and Canary, Richard D.},
     TITLE = {Cores of hyperbolic {$3$}-manifolds and limits of {K}leinian
              groups. {II}},
   JOURNAL = {J. London Math. Soc. (2)},
  FJOURNAL = {Journal of the London Mathematical Society. Second Series},
    VOLUME = {61},
      YEAR = {2000},
    NUMBER = {2},
     PAGES = {489--505},
      ISSN = {0024-6107,1469-7750},
   MRCLASS = {30F40 (57M50 57S30)},
  MRNUMBER = {1760675},
MRREVIEWER = {Fr\'ed\'eric\ Paulin},
       DOI = {10.1112/S0024610799008595},
       URL = {https://doi.org/10.1112/S0024610799008595},
}

@book {RENORM,
    AUTHOR = {McMullen, Curtis T.},
     TITLE = {Renormalization and 3-manifolds which fiber over the circle},
    SERIES = {Annals of Mathematics Studies},
    VOLUME = {142},
 PUBLISHER = {Princeton University Press, Princeton, NJ},
      YEAR = {1996},
     PAGES = {x+253},
      ISBN = {0-691-01153-2},
   MRCLASS = {57N10 (26A18 30F40 58F99)},
  MRNUMBER = {1401347},
MRREVIEWER = {Athanase\ Papadopoulos},
       DOI = {10.1515/9781400865178},
       URL = {https://doi.org/10.1515/9781400865178},
}

@article {JOR,
    AUTHOR = {J\o rgensen, Troels},
     TITLE = {On discrete groups of {M}\"obius transformations},
   JOURNAL = {Amer. J. Math.},
  FJOURNAL = {American Journal of Mathematics},
    VOLUME = {98},
      YEAR = {1976},
    NUMBER = {3},
     PAGES = {739--749},
      ISSN = {0002-9327,1080-6377},
   MRCLASS = {30A58},
  MRNUMBER = {427627},
MRREVIEWER = {V.\ Chuckrow},
       DOI = {10.2307/2373814},
       URL = {https://doi.org/10.2307/2373814},
}

@article {BOWGF,
    AUTHOR = {Bowditch, B. H.},
     TITLE = {Geometrical finiteness for hyperbolic groups},
   JOURNAL = {J. Funct. Anal.},
  FJOURNAL = {Journal of Functional Analysis},
    VOLUME = {113},
      YEAR = {1993},
    NUMBER = {2},
     PAGES = {245--317},
      ISSN = {0022-1236,1096-0783},
   MRCLASS = {57M50 (20H10 30F40 57S30)},
  MRNUMBER = {1218098},
MRREVIEWER = {Darryl\ McCullough},
       DOI = {10.1006/jfan.1993.1052},
       URL = {https://doi.org/10.1006/jfan.1993.1052},
}

@article {KENT,
    AUTHOR = {Kent, IV, Richard Peabody},
     TITLE = {Skinning maps},
   JOURNAL = {Duke Math. J.},
  FJOURNAL = {Duke Mathematical Journal},
    VOLUME = {151},
      YEAR = {2010},
    NUMBER = {2},
     PAGES = {279--336},
      ISSN = {0012-7094,1547-7398},
   MRCLASS = {30F40 (30F60 32G15 57M50)},
  MRNUMBER = {2598379},
MRREVIEWER = {Athanase\ Papadopoulos},
       DOI = {10.1215/00127094-2009-066},
       URL = {https://doi.org/10.1215/00127094-2009-066},
}

@article {DICKCOVER,
    AUTHOR = {Canary, Richard D.},
     TITLE = {A covering theorem for hyperbolic {$3$}-manifolds and its
              applications},
   JOURNAL = {Topology},
  FJOURNAL = {Topology. An International Journal of Mathematics},
    VOLUME = {35},
      YEAR = {1996},
    NUMBER = {3},
     PAGES = {751--778},
      ISSN = {0040-9383},
   MRCLASS = {57M50 (30F40 57N10)},
  MRNUMBER = {1396777},
MRREVIEWER = {Ken-ichi\ Ohshika},
       DOI = {10.1016/0040-9383(94)00055-7},
       URL = {https://doi.org/10.1016/0040-9383(94)00055-7},
}

@article {EGG,
    AUTHOR = {Anderson, James W. and Lecuire, Cyril},
     TITLE = {Strong convergence of {K}leinian groups: the cracked eggshell},
   JOURNAL = {Comment. Math. Helv.},
  FJOURNAL = {Commentarii Mathematici Helvetici. A Journal of the Swiss
              Mathematical Society},
    VOLUME = {88},
      YEAR = {2013},
    NUMBER = {4},
     PAGES = {813--857},
      ISSN = {0010-2571,1420-8946},
   MRCLASS = {57M50 (30F40)},
  MRNUMBER = {3134412},
MRREVIEWER = {Ken-ichi\ Ohshika},
       DOI = {10.4171/CMH/304},
       URL = {https://doi.org/10.4171/CMH/304},
}

@article {SOUSTO,
    AUTHOR = {Souto, Juan and Stover, Matthew},
     TITLE = {A {C}antor set with hyperbolic complement},
   JOURNAL = {Conform. Geom. Dyn.},
  FJOURNAL = {Conformal Geometry and Dynamics. An Electronic Journal of the
              American Mathematical Society},
    VOLUME = {17},
      YEAR = {2013},
     PAGES = {58--67},
      ISSN = {1088-4173},
   MRCLASS = {57M50 (30F40 30F45 51M10 54F15)},
  MRNUMBER = {3045630},
MRREVIEWER = {David\ Gauld},
       DOI = {10.1090/S1088-4173-2013-00249-X},
       URL = {https://doi.org/10.1090/S1088-4173-2013-00249-X},
}

@book {FORD,
    AUTHOR = {Ford, Lester R.},
     TITLE = {Automorphic functions},
   EDITION = {Second},
 PUBLISHER = {Chelsea Publishing Co., New York},
      YEAR = {1951},
     PAGES = {front matter+333},
   MRCLASS = {30-01 (01A75 11-01 11Fxx 30C55 30C80 30Fxx)},
  MRNUMBER = {3444841},
}

@book {KAP,
    AUTHOR = {Kapovich, Michael},
     TITLE = {Hyperbolic manifolds and discrete groups},
    SERIES = {Progress in Mathematics},
    VOLUME = {183},
 PUBLISHER = {Birkh\"auser Boston, Inc., Boston, MA},
      YEAR = {2001},
     PAGES = {xxvi+467},
      ISBN = {0-8176-3904-7},
   MRCLASS = {57M50 (20F65 20H10 30F40 30F60 32G15)},
  MRNUMBER = {1792613},
MRREVIEWER = {Michel\ Coornaert},
}

@article {WAR,
    AUTHOR = {Warakkagun, Sangsan},
     TITLE = {The space of vectored hyperbolic surfaces is path-connected},
   JOURNAL = {Conform. Geom. Dyn.},
  FJOURNAL = {Conformal Geometry and Dynamics. An Electronic Journal of the
              American Mathematical Society},
    VOLUME = {28},
      YEAR = {2024},
     PAGES = {115--130},
      ISSN = {1088-4173},
   MRCLASS = {57K20 (30C62)},
  MRNUMBER = {4804624},
MRREVIEWER = {Arielle\ Leitner},
       DOI = {10.1090/ecgd/395},
       URL = {https://doi.org/10.1090/ecgd/395},
}

@article {BAIK,
    AUTHOR = {Baik, Hyungryul and Clavier, Lucien},
     TITLE = {The space of geometric limits of one-generator closed
              subgroups of {${\rm PSL}_2(\mathbb{R})$}},
   JOURNAL = {Algebr. Geom. Topol.},
  FJOURNAL = {Algebraic \& Geometric Topology},
    VOLUME = {13},
      YEAR = {2013},
    NUMBER = {1},
     PAGES = {549--576},
      ISSN = {1472-2747,1472-2739},
   MRCLASS = {30F40 (20H10)},
  MRNUMBER = {3116379},
MRREVIEWER = {James\ W.\ Anderson},
       DOI = {10.2140/agt.2013.13.549},
       URL = {https://doi.org/10.2140/agt.2013.13.549},
}

@article {BAIK2,
    AUTHOR = {Baik, Hyungryul and Clavier, Lucien},
     TITLE = {The space of geometric limits of abelian subgroups of {${\rm
              PSL}_2(\mathbb{C})$}},
   JOURNAL = {Hiroshima Math. J.},
  FJOURNAL = {Hiroshima Mathematical Journal},
    VOLUME = {46},
      YEAR = {2016},
    NUMBER = {1},
     PAGES = {1--36},
      ISSN = {0018-2079,2758-9641},
   MRCLASS = {30F40 (20H10)},
  MRNUMBER = {3482336},
       URL = {http://projecteuclid.org/euclid.hmj/1459525928},
}

@misc{IANCHAB,
      title={On the Chabauty space of $\textrm{PSL}_2(\mathbb{R})$, I: lattices and grafting}, 
      author={Ian Biringer and Nir Lazarovich and Arielle Leitner},
      year={2021},
      eprint={2110.14401},
      archivePrefix={arXiv},
      primaryClass={math.GT},
      url={https://arxiv.org/abs/2110.14401}, 
}

@article {FRAGEL,
    AUTHOR = {Fraczyk, Mikolaj and Gelander, Tsachik},
     TITLE = {Infinite volume and infinite injectivity radius},
   JOURNAL = {Ann. of Math. (2)},
  FJOURNAL = {Annals of Mathematics. Second Series},
    VOLUME = {197},
      YEAR = {2023},
    NUMBER = {1},
     PAGES = {389--421},
      ISSN = {0003-486X,1939-8980},
   MRCLASS = {22E40 (22F30 32M15 53C25 53C35)},
  MRNUMBER = {4515682},
MRREVIEWER = {Bruno\ Duchesne},
       DOI = {10.4007/annals.2023.197.1.6},
       URL = {https://doi.org/10.4007/annals.2023.197.1.6},
}

@book {LEE,
    AUTHOR = {Lee, John M.},
     TITLE = {Introduction to {R}iemannian manifolds},
    SERIES = {Graduate Texts in Mathematics},
    VOLUME = {176},
   EDITION = {Second},
 PUBLISHER = {Springer, Cham},
      YEAR = {2018},
     PAGES = {xiii+437},
      ISBN = {978-3-319-91754-2},
   MRCLASS = {53-01 (53B20 53B30 53C20 53C21)},
  MRNUMBER = {3887684},
MRREVIEWER = {Robert\ J.\ Low},
}

@article {ACBumping,
    AUTHOR = {Anderson, James W. and Canary, Richard D.},
     TITLE = {Algebraic limits of {K}leinian groups which rearrange the
              pages of a book},
   JOURNAL = {Invent. Math.},
  FJOURNAL = {Inventiones Mathematicae},
    VOLUME = {126},
      YEAR = {1996},
    NUMBER = {2},
     PAGES = {205--214},
      ISSN = {0020-9910,1432-1297},
   MRCLASS = {57M50 (30F40)},
  MRNUMBER = {1411128},
MRREVIEWER = {Ken-ichi\ Ohshika},
       DOI = {10.1007/s002220050094},
       URL = {https://doi.org/10.1007/s002220050094},
}

@article {KEN,
    AUTHOR = {Bromberg, Kenneth},
     TITLE = {The space of {K}leinian punctured torus groups is not locally
              connected},
   JOURNAL = {Duke Math. J.},
  FJOURNAL = {Duke Mathematical Journal},
    VOLUME = {156},
      YEAR = {2011},
    NUMBER = {3},
     PAGES = {387--427},
      ISSN = {0012-7094,1547-7398},
   MRCLASS = {30F40 (57M50)},
  MRNUMBER = {2772066},
MRREVIEWER = {James\ W.\ Anderson},
       DOI = {10.1215/00127094-2010-215},
       URL = {https://doi.org/10.1215/00127094-2010-215},
}

@article {ACM,
    AUTHOR = {Anderson, James W. and Canary, Richard D. and McCullough,
              Darryl},
     TITLE = {The topology of deformation spaces of {K}leinian groups},
   JOURNAL = {Ann. of Math. (2)},
  FJOURNAL = {Annals of Mathematics. Second Series},
    VOLUME = {152},
      YEAR = {2000},
    NUMBER = {3},
     PAGES = {693--741},
      ISSN = {0003-486X,1939-8980},
   MRCLASS = {57M50 (30F40 37F30 37F45)},
  MRNUMBER = {1815699},
MRREVIEWER = {Leonid\ Potyagailo},
       DOI = {10.2307/2661352},
       URL = {https://doi.org/10.2307/2661352},
}

@article {BBCM,
    AUTHOR = {Brock, Jeffrey F. and Bromberg, Kenneth W. and Canary, Richard
              D. and Minsky, Yair N.},
     TITLE = {Local topology in deformation spaces of hyperbolic
              3-manifolds},
   JOURNAL = {Geom. Topol.},
  FJOURNAL = {Geometry \& Topology},
    VOLUME = {15},
      YEAR = {2011},
    NUMBER = {2},
     PAGES = {1169--1224},
      ISSN = {1465-3060,1364-0380},
   MRCLASS = {57M50 (30F40)},
  MRNUMBER = {2831259},
MRREVIEWER = {Ken-ichi\ Ohshika},
       DOI = {10.2140/gt.2011.15.1169},
       URL = {https://doi.org/10.2140/gt.2011.15.1169},
}

@article {SUL,
    AUTHOR = {Sullivan, Dennis},
     TITLE = {Quasiconformal homeomorphisms and dynamics. {II}. {S}tructural
              stability implies hyperbolicity for {K}leinian groups},
   JOURNAL = {Acta Math.},
  FJOURNAL = {Acta Mathematica},
    VOLUME = {155},
      YEAR = {1985},
    NUMBER = {3-4},
     PAGES = {243--260},
      ISSN = {0001-5962,1871-2509},
   MRCLASS = {58F11 (20H05 30C60 30D05 57R99 58F08)},
  MRNUMBER = {806415},
MRREVIEWER = {L.\ Keen},
       DOI = {10.1007/BF02392543},
       URL = {https://doi.org/10.1007/BF02392543},
}

@misc {CREMYAR,
    AUTHOR = {Cremaschi, Tommaso and Yarmola, Andrew},
     TITLE = {Quasi-conformal deformation space of infinite-type hyperbolic 3-manifolds},
    YEAR = {Forthcoming},
}

@article {BrockMinskyNamaziSouto,
    AUTHOR = {Brock, Jeffrey and Minsky, Yair and Namazi, Hossein and Souto,
              Juan},
     TITLE = {Bounded combinatorics and uniform models for hyperbolic
              3-manifolds},
   JOURNAL = {J. Topol.},
  FJOURNAL = {Journal of Topology},
    VOLUME = {9},
      YEAR = {2016},
    NUMBER = {2},
     PAGES = {451--501},
      ISSN = {1753-8416,1753-8424},
   MRCLASS = {30F40 (30F45 30F60 57M50 57N10)},
  MRNUMBER = {3509970},
MRREVIEWER = {Fr\'ed\'eric\ Palesi},
       DOI = {10.1112/jtopol/jtv043},
       URL = {https://doi.org/10.1112/jtopol/jtv043},
}

\end{document}